\documentclass[reqno,12pt]{amsart}

\usepackage[utf8]{inputenc}
\usepackage{microtype}
\usepackage{amsfonts}
\usepackage{amsmath}
\usepackage{graphicx}
\usepackage{hyperref}
\usepackage{color}


\usepackage{amssymb}
\usepackage{enumitem}

\usepackage{tikz}
\usetikzlibrary{topaths}

\usepackage{a4wide}

\usepackage{empheq}


\newtheorem{theorem}{Theorem}
\newtheorem*{theorem*}{Theorem}

\newtheorem{lemma}[theorem]{Lemma}

\newtheorem{proposition}[theorem]{Proposition}
\newtheorem*{proposition*}{Proposition}

\newtheorem{corollary}[theorem]{Corollary}
\newtheorem*{corollary*}{Corollary}
\newtheorem{conjecture}[theorem]{Conjecture}

\newtheorem*{conjecture*}{Conjecture}

\newtheorem*{question*}{Question}

\newtheorem*{main:main_Finfty}{Theorem~\ref{thrm:main_Finfty}}
\newtheorem*{main:main_separating}{Theorem~\ref{thrm:main_separating}}
\newtheorem*{main:main_embedding}{Theorem~\ref{thrm:embed_fin_gen}}

\theoremstyle{definition}
\newtheorem{definition}[theorem]{Definition}
\newtheorem{remark}[theorem]{Remark}
\newtheorem{example}[theorem]{Example}
\newtheorem{observation}[theorem]{Observation}

\usepackage{thmtools}
\usepackage{thm-restate} 

\newcommand{\N}{\mathbb{N}}

\newcommand{\R}{\mathbb{R}}

\newcommand{\G}{\mathcal{G}}   
\newcommand{\C}{\mathfrak{C}}  
\newcommand{\brick}{B}  
\newcommand{\canon}{h}  

\newcommand{\emptysequence}{\varnothing}
\newcommand{\V}{\mathcal{V}}

\newcommand{\lk}{\operatorname{lk}}

\newcommand{\dlk}{\lk^\downarrow\!}

\newcommand{\defeq}{\mathbin{\vcentcolon =}}
\newcommand{\newword}[1]{\textit{#1}}

\newcommand{\id}{\mathrm{id}}
\newcommand{\gtwist}{\mathrm{gtwist}}

\newcommand{\symgroup}[1]{\Sigma_{#1}}  
\newcommand{\SV}{SV}  
\newcommand{\SVG}{SV_G}  
\newcommand{\SVV}{S\mathcal{V}}  
\newcommand{\SVVG}{S\mathcal{V}_G}  
\newcommand{\twist}{\tau}

\usepackage[normalem]{ulem}

\DeclareMathOperator{\F}{F}

\DeclareMathOperator{\Stab}{Stab}

\DeclareMathOperator{\res}{res}
\DeclareMathOperator{\core}{core}
\DeclareMathOperator{\Spec}{Spec}
\DeclareMathOperator{\SSpec}{SSpec}

\numberwithin{equation}{section}

%
%

\begin{document}

\title{Twisted Brin--Thompson groups}
\date{\today}
\subjclass[2010]{Primary 20F65;   
                 Secondary 
								57M07,    
								20E32}      

\keywords{Thompson group, finiteness properties, simple group, right-angled Artin group, quasi-isometry, oligomorphic, Cantor space.}

\author[J.~Belk]{James Belk}
\address{School of Mathematics and Statistics, University of St Andrews, St Andrews, KY16~9SS, Scotland.} 
\email{jmb42@st-andrews.ac.uk}

\author[M.~C.~B.~Zaremsky]{Matthew C.~B.~Zaremsky}
\address{Department of Mathematics and Statistics, University at Albany (SUNY), Albany, NY 12222.}
\email{mzaremsky@albany.edu}

\begin{abstract}
We construct a family of infinite simple groups that we call \emph{twisted Brin--Thompson groups}, generalizing Brin's higher-dimensional Thompson groups~$sV$ ($s\in\N$). We use twisted Brin--Thompson groups to prove a variety of results regarding simple groups. For example, we prove that every finitely generated group embeds quasi-isometrically as a subgroup of a two-generated simple group, strengthening a result of Bridson. We also produce examples of simple groups that contain every~$sV$ and hence every right-angled Artin group, including examples of type~$\F_\infty$ and a family of examples of type $\F_{n-1}$ but not of type~$\F_n$, for arbitrary~$n\in\N$. This provides the second known infinite family of simple groups distinguished by their finiteness properties.
\end{abstract}

\maketitle
\thispagestyle{empty}

\renewcommand*{\thetheorem}{\Alph{theorem}} 
\section*{Introduction}

The Brin--Thompson groups $sV$ ($s\in\N$) are a family of groups introduced by Brin in \cite{brin04} as higher-dimensional generalizations of the classical Thompson group~$V=1V$. Each $sV$ acts by homeomorphisms on the Cantor space $\C^s$, where $\C=\{0,1\}^\omega$ is the usual Cantor set. Brin proved that all of these groups are finitely generated and simple~\cite{brin10}, and that $2V$ is finitely presented~\cite{brin05}.  Hennig and Matucci later showed that all of the groups $sV$ are finitely presented~\cite{hennig12}, and Bleak and Lanoue proved that $sV$ is not isomorphic to $s'V$ for $s\ne s'$~\cite{bleak10}.

In this paper we introduce a new family of infinite simple groups that we call \textit{twisted Brin--Thompson groups}.  Given any group $G$ acting faithfully on a countable set~$S$, the associated twisted Brin--Thompson group $\SVG$ acts by homeomorphisms on the Cantor space~$\C^S$.  The Brin--Thompson groups themselves correspond to the case where $S$ is finite and $G$ is trivial.

When $S$ is infinite, the group $\SVG$ is ``large'' in the sense that it contains $sV$ for all~$s\in \N$.  The first author, Bleak, and Matucci have proven that every finitely generated right-angled Artin group embeds into some $sV$~\cite{belk20}, and therefore $\SVG$ contains every finitely generated right-angled Artin group when $S$ is infinite.  It follows moreover from \cite{belk20} that such $\SVG$ contain all virtually special groups, for example all finitely generated Coxeter groups and many word hyperbolic groups.  Despite this ``largeness'', twisted Brin--Thompson groups can have surprisingly good finiteness properties.  For example, in Section~\ref{sec:gens} we prove:

\begin{restatable}{theorem}{finitegeneration}\label{thm:FinitelyGeneratedTheorem}The group $\SVG$ is finitely generated if and only if $G$ is finitely generated and the action of $G$ on $S$ has finitely many orbits.
\end{restatable}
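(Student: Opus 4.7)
The plan is to prove both directions using the tree-pair-with-twist description of elements of $\SVG$ developed earlier in the paper. The crucial feature is that every element of $\SVG$ admits a \emph{finite} representative involving only finitely many coordinates of $S$ (on which the partition splits) and only finitely many twist elements of $G$ (assigned to the matched pieces).

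\textbf{Necessity.} Assume $\SVG = \langle X \rangle$ for a finite set $X$. From $X$ I extract the finite set $S_0 \subseteq S$ of coordinates appearing in any fixed representative of any generator, and the finite set $T \subseteq G$ of twists that appear; set $G_0 = \langle T \rangle$, a finitely generated subgroup of $G$. A routine induction on word length in $X \cup X^{-1}$ — using that composing two tree-pair-with-twist elements refines their partitions and multiplies the corresponding twists, and that a twist translates coordinate labels by the corresponding element of $G_0$ — shows that every element of $\langle X \rangle$ admits a representative whose twists lie in $G_0$ and whose split coordinates lie in $G_0 \cdot S_0$. Since $G$ embeds in $\SVG$ as pure twists, and any refinement of the trivial-partition representative of $g \in G$ carries the twist $g$ on each piece, the inclusion $G \subseteq \langle X \rangle$ forces $G = G_0$, so $G$ is finitely generated. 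Applying the same representative condition to a ``bisection at $s$'' element (swapping the two halves of $\C^S$ defined by coordinate $s$) for arbitrary $s \in S$ forces $S \subseteq G \cdot S_0$, so $G$ acts on $S$ with at most $|S_0|$ orbits.

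\textbf{Sufficiency.} Assume $G = \langle Y \rangle$ is finitely generated and fix orbit representatives $s_1, \ldots, s_k \in S$. As candidate generating set I take $Y$ (embedded via pure twists) together with a finite generating set for the copy of Brin's group $kV$ sitting inside $\SVG$ over the coordinates $s_1, \ldots, s_k$, which exists by \cite{brin10}. To see these generate $\SVG$, I would observe that every $s \in S$ equals $g \cdot s_i$ for some $g \in \langle Y \rangle$, so the bisection at $s$ is the $g$-conjugate of the corresponding bisection at $s_i \in kV$, and then invoke a standard tree-pair decomposition of an arbitrary element of $\SVG$ into a product of bisections and pure twists.

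The main obstacle is the inductive step in the necessity direction: formally verifying that the composition formula for tree-pair-with-twist elements preserves both ``twists in the subgroup $G_0$'' and ``split coordinates in the $G_0$-orbit of $S_0$''. This is a book-keeping exercise relying on the explicit composition law established in earlier sections, and it is where the $G$-equivariance of the splitting structure is used most directly.
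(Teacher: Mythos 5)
Your necessity argument is sound in outline and takes a genuinely different route from the paper's: where the paper exhibits $\SVG$ as an increasing union of proper subgroups (one exhaustion when $G$ is not finitely generated, another when there are infinitely many orbits), you instead extract finite data from a hypothetical finite generating set and propagate it by induction. The bookkeeping does go through: germinal twists of products lie in the subgroup $G_0$ generated by the germinal twists of the factors, and the swap relation $x_{\gamma s}\twist_\gamma = (\twist_\gamma\oplus\twist_\gamma)x_s$ only ever translates split coordinates by elements of $G_0$, so all spectra stay inside $G_0\cdot S_0$. Your endgame using the bisection at $s$ is also fine, since $\Spec$ of that element is exactly $\{s\}$.

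The sufficiency direction, however, has a real gap. Your candidate generating set --- the pure twists $\iota_\varnothing(Y)$ together with a copy of $kV$ over the orbit representatives --- lies inside $\langle SV,\iota_\varnothing(G)\rangle$, and this is a \emph{proper} subgroup of $\SVG$ whenever $G\ne 1$. The obstruction is the germinal twist: for $h\in SV$ the function $\kappa\mapsto\gtwist_\kappa(h)$ is identically $1$, for a pure twist $\iota_\varnothing(\gamma)$ it is identically $\gamma$, and by the cocycle identity $\gtwist_\kappa(h_2h_1)=\gtwist_{h_1(\kappa)}(h_2)\,\gtwist_\kappa(h_1)$ any product of such elements has germinal twist \emph{constant} in $\kappa$. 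But $\SVG$ contains elements with non-constant germinal twist, such as $\iota_1^s(\gamma)$, which twists by $\gamma$ on one half of $\C^S$ and trivially on the other. So the claimed ``decomposition of an arbitrary element of $\SVG$ into a product of bisections and pure twists'' is false; you must add a generator implementing a \emph{local} twist, which is exactly why the paper's Proposition~\ref{prop:BasicGenSets} uses $\iota_1^s(G)$ rather than $\iota_\varnothing(G)$. There is also a secondary problem: conjugating your $kV$ by pure twists yields only the copies $\{\gamma s_1,\ldots,\gamma s_k\}V^{(S)}$, and the resulting collection of $2V$-pieces need not generate $SV$ --- for instance if $G$ acts freely on $S$ the associated graph of edges $\{\gamma s_i,\gamma s_j\}$ is disconnected. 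The paper's choice of $\Gamma$ includes the extra edges $\{\gamma s_1,\gamma\gamma_j s_1\}$ precisely to force connectivity via Proposition~\ref{prop:GeneratorsSV}.
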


In particular, if the action of $G$ on $S$ has $m$ orbits and $G$ has $n$ generators, then we prove that $\SVG$ is generated by two copies of $G$ and at most $m+n-1$ copies of the Brin--Thompson group~$2V$.  The groups $\SVG$ are also vigorous in the sense of Bleak, Elliott, and Hyde~\cite{hyde17,BEH20}, so it follows that $\SVG$ can be generated by two elements whenever it is finitely generated.  Indeed, a related generalization of the Brin--Thompson groups has been investigated in this context by Hyde~\cite{hyde17}.

There is a natural monomorphism $\iota_\varnothing\colon G\to \SV_G$, corresponding to the canonical action of $G$ on the Cantor space~$\C^S$. For $G$ and $\SVG$ finitely generated, we prove in Section~\ref{sec:arb_fin_lengths} that this embedding is geometrically well-behaved:

\begin{restatable}{theorem}{quasiretracts}\label{thm:quasi-retracts}Suppose $G$ and $\SVG$ are finitely generated. Then there exists a coarse Lipschitz map $\rho\colon \SVG\to G$ such that $\rho\circ \iota_\varnothing$ is the identity.  In particular, the group $\SVG$ quasi-retracts onto~$G$, and $\iota_\varnothing$ is a quasi-isometric embedding of $G$ into~$\SVG$.
\end{restatable}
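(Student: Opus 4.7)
The plan is to define $\rho(f)$ as a canonical ``twist element'' in $G$ extracted from the tree-pair representation of $f \in \SVG$, and to verify the Lipschitz and retraction properties directly on a structured finite generating set.

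By Theorem~\ref{thm:FinitelyGeneratedTheorem}, the hypothesis that $\SVG$ is finitely generated forces $G$ to be finitely generated (fix generators $Y$) and to act on $S$ with finitely many orbits. I will use the structured generating set $X = \iota_\varnothing(Y) \cup X_0$ of $\SVG$ described just after Theorem~\ref{thm:FinitelyGeneratedTheorem}, where each element of $X_0$ is supported on a single proper dyadic brick of $\C^S$. Each $f \in \SVG$ admits a tree-pair representation in which each leaf of the source tree carries a twist in $G$ describing the local action of $f$ by coordinate permutation on that brick. I pick a reference point $x_0 \in \C^S$ lying outside the support of every $x \in X_0$, which is possible because $X_0$ is finite and each of these supports is a proper clopen subset; then I define $\rho(f) \defeq g_{x_0}$, the twist on the source brick containing $x_0$. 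Subdividing a brick does not alter the twist on its sub-bricks, so $\rho$ is well-defined on equivalence classes of tree-pair diagrams.

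The core step is the Lipschitz estimate: for every generator $x \in X$ and every $f \in \SVG$, I need $d_Y(\rho(fx), \rho(f)) \le C$ for a uniform constant $C$. If $x \in X_0$ is supported on a brick $B$ not containing $x_0$, then $x$ acts as the identity on an open neighborhood of $x_0$, so $fx$ and $f$ coincide near $x_0$ and $\rho(fx) = \rho(f)$. If $x = \iota_\varnothing(y) \in \iota_\varnothing(Y)$, then $x$ is the global coordinate permutation induced by $y$, and a direct tree-pair computation shows that $\rho(fx)$ differs from $\rho(f)$ by $y$ on the appropriate side. Iterating along a word of length $n$ gives $d_Y(\rho(f_1), \rho(f_2)) \le C \cdot d_X(f_1, f_2)$. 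The retraction property $\rho \circ \iota_\varnothing = \id_G$ is immediate since $\iota_\varnothing(g)$ is represented by the trivial tree-pair with the single global twist $g$. The quasi-isometric embedding conclusion then follows from the standard fact that an injective homomorphism between finitely generated groups with a coarse Lipschitz left inverse is a quasi-isometric embedding.

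The main obstacle is the $\iota_\varnothing(Y)$ case of the Lipschitz step: the coordinate permutation moves $x_0$ into a possibly different brick of $f$'s source tree, so one must carefully track the relationship between the twist of $fx$ near $x_0$ and the twists of $f$ at both $x_0$ and its image under $y$. Resolving this requires some bookkeeping on how tree-pair diagrams compose in $\SVG$, and is the main place where the detailed combinatorics of the twisted Brin--Thompson construction enter the argument.
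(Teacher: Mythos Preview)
Your map $\rho$ is the same as the paper's---the germinal twist at a fixed reference point---but there is a real gap in the Lipschitz step for the generators $\iota_\varnothing(y)$, and it is not just bookkeeping. With $x=\iota_\varnothing(y)=\twist_y$ and $\rho(f)=\gtwist_{x_0}(f)$, multiplicativity of germinal twists gives
\[
\rho(fx)=\gtwist_{x_0}(f\twist_y)=\gtwist_{\twist_y(x_0)}(f)\cdot y.
\]
The factor $\gtwist_{\twist_y(x_0)}(f)$ is the twist of $f$ on whatever brick contains $\twist_y(x_0)$, and this can be \emph{any} element of $G$, with no relation to $\rho(f)=\gtwist_{x_0}(f)$. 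So $d_Y(\rho(fx),\rho(f))$ is unbounded over $f\in\SVG$, and your claimed inequality fails. The ``obstacle'' you flag is genuinely fatal as written.

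There are two clean repairs. One is to choose $x_0$ to be fixed by every $\twist_y$, for instance the constant point $\kappa(s)=000\cdots$; then $\twist_y(x_0)=x_0$ and the computation above gives $\rho(fx)=\rho(f)\cdot y$ exactly. You must then also check that this particular $x_0$ can be arranged to lie outside the supports of your chosen small-support generators, which is easy (e.g.\ $\iota_1^s(\gamma)$ is supported where the $s$-coordinate starts with~$1$). The other repair, which is what the paper does, is to work with \emph{left} multiplication by generators: since every $h'\in SV$ has trivial germinal twist everywhere and every $\iota_\varnothing(a)$ has germinal twist $a$ everywhere, one gets $\rho(\iota_\varnothing(a)h)=a\,\rho(h)$ and $\rho(h'h)=\rho(h)$ immediately, with no need to control where the reference point lands. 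Either route makes $\rho$ nonexpanding for the appropriate word metric.
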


See Section~\ref{sec:arb_fin_lengths} for the relevant definitions.  Since every finitely generated group $G$ acts faithfully and transitively on \textit{some} countable set $S$ (e.g., $S=G$), and since $\SVG$ is simple (Theorem~\ref{thrm:simple}), Theorems~\ref{thm:FinitelyGeneratedTheorem} and~\ref{thm:quasi-retracts} immediately yield the following:

\begin{theorem}Every finitely generated group embeds quasi-isometrically as a subgroup of a finitely generated (indeed, two-generated) simple group.\qed
\end{theorem}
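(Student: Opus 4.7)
The plan is to assemble the preceding results applied to a carefully chosen action. Given a finitely generated group $G$, I would take $S \defeq G$ equipped with the left regular action of $G$ on itself by multiplication. This action is faithful (indeed free) and transitive, so it lies precisely within the regime covered by the restated theorems: the acting group is finitely generated and there is a single orbit on $S$.

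First I would apply Theorem~\ref{thm:FinitelyGeneratedTheorem}, which tells me immediately that $\SVG$ is finitely generated. Combined with the simplicity of $\SVG$ (Theorem~\ref{thrm:simple}) and the remark following Theorem~\ref{thm:FinitelyGeneratedTheorem} that any finitely generated vigorous group is in fact two-generated (invoking Bleak--Elliott--Hyde~\cite{hyde17,BEH20}), this upgrades the conclusion to: $\SVG$ is a two-generated simple group. Next I would invoke Theorem~\ref{thm:quasi-retracts}: since both $G$ and $\SVG$ are finitely generated, the natural map $\iota_\varnothing\colon G \to \SVG$ is a quasi-isometric embedding. Combining these statements, $G$ is realized as a quasi-isometrically embedded subgroup of the two-generated simple group $\SVG$, as claimed.

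There is essentially no obstacle to clear: one only needs to observe that $S = G$ satisfies the hypotheses of the earlier theorems, and this is immediate from the faithfulness and transitivity of the left regular action. All substantive content lies in the three previously stated results (finite generation of $\SVG$, quasi-retraction onto $G$, and simplicity) together with the vigorous-group results cited above; the argument here is purely a packaging corollary, which is why the statement is presented with no standalone proof.
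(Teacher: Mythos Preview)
Your proposal is correct and matches the paper's own argument essentially line for line: the paper also takes $S=G$ with the regular action, then invokes Theorems~\ref{thm:FinitelyGeneratedTheorem} and~\ref{thm:quasi-retracts} together with simplicity (Theorem~\ref{thrm:simple}) and the Bleak--Elliott--Hyde two-generation result to conclude. There is nothing to add.
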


Hall proved that every finitely generated group embeds into a finitely generated simple group~\cite{hall74}. Schupp~\cite{schupp76} and independently Goryushkin~\cite{goryushkin74} showed that the simple group can be taken to be two-generated, with Schupp's generators having orders two and~three.  However, it was not previously known that such embeddings could be made quasi-isometric.  By the result of~\cite{BEH20}, the two generators for $\SVG$ can be taken to have finite order, with one having order two, but we do not know whether the second generator can be made to have order three. The above result also strengthens a theorem of Bridson, who proved that every finitely generated group can be quasi-isometrically embedded as a subgroup of a finitely generated group that has no proper finite index subgroups~\cite{bridson98}.

As a remark, in our proof of Theorem~\ref{thm:quasi-retracts}, for our particular choices of finite generating sets, the quasi-isometric embedding is actually an isometric embedding. The fact that every finitely generated group embeds isometrically as a subgroup of a finitely generated simple group has also been proved independently by Darbinyan and Steenbock in \cite{darbinyan20}.

We can also prove some further finiteness properties for $\SVG$ in the case where the action of $G$ on $S$ is sufficiently transitive. Specifically, recall that a group $G$ of permutations of a set $S$ is \newword{oligomorphic} if for every $k\ge 1$ there are only finitely many orbits of $k$-element subsets of~$S$ \cite{cameron90}.  The proof of the following theorem occupies Sections~\ref{sec:cpxes}, \ref{sec:action}, and~\ref{sec:dlks}:

\begin{restatable}{theorem}{finitenesstheorem}\label{thm:FinitenessTheorem}Let $G$ be an oligomorphic group of permutations of a countable set~$S$. Let $n\in\mathbb{N}\cup\{\infty\}$, and suppose that
\begin{enumerate}
\item $G$ is of type\/~$\F_n$, and\smallskip
\item The stabilizer in $G$ of every finite subset of $S$ is of type\/~$\F_n$.
\end{enumerate}
Then $\SVG$ is of type\/~$\F_n$ as well.
\end{restatable}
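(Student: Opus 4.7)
The plan is to apply Brown's criterion to a natural action of $\SVG$ on a contractible complex $X$ built from ``dyadic bricks'' in $\C^S$, i.e.\ finite prefix-free partitions of $\C^S$, each of which depends on only finitely many coordinates of $S$. These bricks form a poset under refinement, and any two bricks admit a common refinement (take the join of their subdivisions on the union of their supports), so the poset is directed and its order complex is contractible. Following the Stein--Farley pattern used to analyze $V$ and the Brin--Thompson groups $sV$, I would then cut this down to a locally finite subcomplex $X$ on which $\SVG$ acts with contractible total space.

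Next I would identify cell stabilizers and orbits. A brick with finite support $T\subseteq S$ partitions $\C^S$ into finitely many pieces indexed by a dyadic subdivision of $\C^T$, and its stabilizer in $\SVG$ fits into a short exact sequence whose kernel is a product of finitely many copies of ordinary Brin--Thompson groups $sV$ (acting independently on the pieces) and whose quotient involves $\Stab_G(T)$, possibly extended by a finite factor that permutes equivalent pieces. Since each $sV$ is already known to be of type $\F_\infty$, and $\Stab_G(T)$ is of type $\F_n$ by hypothesis~(ii), standard extension and direct-product lemmas for finiteness properties yield that every cell stabilizer is of type~$\F_n$. Cocompactness of the action on each skeleton is where oligomorphicity is used: up to the $\SVG$-action, a brick is determined by a finite amount of combinatorial data together with the $G$-orbit of its support, and there are only finitely many such orbits per fixed cardinality.

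It then remains to define a proper height function $h\colon X\to\N$ whose sublevel sets $X_{\le k}$ are $\SVG$-cocompact, and to prove that descending links $\dlk(v)$ of vertices with $h(v)=k$ become $(n-1)$-connected once $k$ is large enough; Brown's criterion will then give type~$\F_n$. A natural height is the number of pieces of the brick (or a weighted refinement making $h$ a Morse function). The descending link should be homotopy equivalent to a matching-type complex on the pieces, recording which collections of pieces can simultaneously be un-split to descend in the poset. In the untwisted case such complexes are known to be highly connected, and this is where the main obstacle lies: the $G$-twist introduces new identifications among pieces sharing the same support, and one must show that this does not destroy connectivity. I would expect to address this either by exhibiting $\dlk(v)$ as a $G$-equivariant cover or quotient of an untwisted matching complex and running a nerve or spectral sequence argument, or by directly adapting the discrete Morse analysis used for $sV$ while tracking the twisting data coordinate-by-coordinate; the oligomorphic hypothesis should be crucial in ensuring that the combinatorics introduced by the $G$-action remains finite enough for these connectivity bounds to hold.
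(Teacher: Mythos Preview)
Your outline has the right overall shape (contractible poset, Stein subcomplex, Brown--Morse), but there is a genuine gap in the choice of complex and the resulting stabilizer computation.

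If vertices are literally dyadic partitions of $\C^S$ with $\SVG$ acting by pushforward, then the stabilizer of a partition $\{B_1,\dots,B_m\}$ is \emph{not} an extension of $\Stab_G(T)$ by a product of copies of $sV$. The subgroup fixing each $B_i$ setwise is a product of $m$ copies of $\SVG$ (each $B_i$ is canonically a copy of $\C^S$, and any element of $\SVG$ supported on $B_i$ preserves the partition), so you would be assuming the very finiteness property you are trying to prove. The paper avoids this by taking vertices to be right cosets $[h]=\mathcal{G}(m)\,h$ of the group of \emph{twisted permutations} $\mathcal{G}(m)\cong G\wr\Sigma_m$ inside the groupoid $\SVVG$, with $h\colon\C^S\to\C^S(m)$. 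Then the stabilizer of $[h]$ is exactly $h^{-1}\mathcal{G}(m)h\cong G\wr\Sigma_m$, and simplex stabilizers are commensurate with finite products $\prod_i\Stab_G(\Spec(f_i))$; hypotheses (i) and (ii) now apply directly.

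Two further corrections. First, when $S$ is infinite the Stein complex is \emph{not} locally finite: a single cube has infinitely many elementary splittings (one per $s\in S$), so one cannot pass to a locally finite subcomplex in the usual way; cocompactness of sublevel sets must be argued via oligomorphicity instead. Second, the twisting does not complicate the descending link analysis in the way you anticipate: because the twists are already quotiented out in forming $[h]$, the descending link $E_m$ depends only on the combinatorics of elementary multicolored forests, and the connectivity argument (a secondary Morse function reducing to the matching complex of $K_m$) is essentially the untwisted one and uses no hypothesis on $G$. Oligomorphicity enters only in the cocompactness step.
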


Here we say that a group is of \newword{type\/~$\F_n$} if it acts geometrically (i.e., properly and cocompactly) on some $(n-1)$-connected CW~complex.  (A more standard definition is that a group is of type~$\F_n$ if it admits a classifying space with finite $n$-skeleton, but this is equivalent, for example thanks to Brown's Criterion \cite{brown87}.)  For example, a group is of type~$\F_1$ if and only if it is finitely generated, and a group is of type~$\F_2$ if and only if it is finitely presented.  A group is of \newword{type~$\F_\infty$} if it is of type~$\F_n$ for all $n\in\N$.

Brown and Geoghegan proved that Thompson's group $F$ is of type $\F_\infty$~\cite{BG84}, and Brown proved the same result for Thompson's groups $T$ and $V$~\cite{brown87}. Kochloukova, Mart\'{i}nez-P\'{e}rez, and Nucinkis proved that the Brin--Thompson groups $2V$ and $3V$ are of type $\F_\infty$~\cite{kochloukova13}, and Fluch, Marschler, Witzel, and the second author extended this result to all of the Brin--Thompson groups~\cite{fluch13}.  Our proof of the above theorem generalizes their techniques, and involves constructing a similar complex and analyzing the descending links, though the fact that $S$ may be infinite complicates the situation considerably.

We can use the above theorem to produce many new simple groups of type $\F_\infty$. The following corollary gives one example, which to the best of our knowledge is the first example of a finitely presented simple group that contains every right-angled Artin group.

\begin{corollary}\label{cor:ThompsonGroupExample}If $G$ is Thompson's group $F$ and $S$ is the set of dyadic rationals in~$(0,1)$, then $\SVG$ is a simple group of type\/~$\F_\infty$ that contains $sV$ for all~$s\in\N$, and hence contains every right-angled Artin group.
\end{corollary}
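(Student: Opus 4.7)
The plan is to apply Theorem~\ref{thm:FinitenessTheorem} to the standard action of Thompson's group $F$ on the dyadic rationals $S\subset(0,1)$, and then combine the resulting finiteness property with the simplicity of $\SVG$ (Theorem~\ref{thrm:simple}) and the embedding facts already stated in the introduction.

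First I would verify that the action of $F$ on $S$ is faithful and oligomorphic. Faithfulness is standard: any nontrivial piecewise-linear dyadic homeomorphism of $[0,1]$ moves some dyadic rational. For oligomorphicity, I would establish the stronger fact that $F$ acts transitively on strictly increasing $k$-tuples of dyadic rationals in $(0,1)$, for every $k\ge 1$. Given two such tuples $p_1<\cdots<p_k$ and $q_1<\cdots<q_k$, one can produce an element of $F$ sending $p_i$ to $q_i$ by piecing together dyadic-affine maps on each subinterval (this is a classical fact about $F$). Since $F$ preserves the order of $(0,1)$, an orbit of ordered tuples is the same as an orbit of $k$-element subsets, so there is exactly one orbit for each $k$.

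Next I would verify the two hypotheses of Theorem~\ref{thm:FinitenessTheorem}. That $F$ is of type $\F_\infty$ is the classical theorem of Brown and Geoghegan~\cite{BG84}. For the stabilizer hypothesis, the fact that $F$ preserves the order on $(0,1)$ implies that the setwise stabilizer in $F$ of a finite subset $\{q_1<\cdots<q_k\}\subset S$ coincides with the pointwise stabilizer. Setting $q_0=0$ and $q_{k+1}=1$, an element of this stabilizer is determined by its restrictions to the intervals $[q_i,q_{i+1}]$; because each $q_i$ is dyadic, a dyadic-affine rescaling identifies each such restriction with an arbitrary element of $F$. Hence the stabilizer is isomorphic to the direct product $F^{k+1}$, which is of type $\F_\infty$ since $F$ is and since type $\F_\infty$ is preserved by finite direct products.

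With both hypotheses verified, Theorem~\ref{thm:FinitenessTheorem} yields that $\SVG$ is of type $\F_\infty$. Simplicity is Theorem~\ref{thrm:simple}. Since $S$ is countably infinite, $\SVG$ contains $sV$ for every $s\in\N$ by the discussion in the introduction, and hence contains every finitely generated right-angled Artin group by~\cite{belk20}. I do not anticipate any serious obstacle; the only mild subtlety is the stabilizer computation, which hinges crucially on the order-preserving nature of $F$ together with the closure of dyadic rationals under dyadic-affine rescaling.
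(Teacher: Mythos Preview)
Your proposal is correct and follows essentially the same route as the paper: verify oligomorphicity via transitivity on ordered $k$-tuples, identify the stabilizer of a $k$-element set as $F^{k+1}$, and apply Theorem~\ref{thm:FinitenessTheorem} together with Theorem~\ref{thrm:simple} and the introductory remarks on $sV$ and right-angled Artin groups. The paper's proof is terser (citing \cite[Lemma~4.2]{cannon96} for the transitivity and asserting the stabilizer computation without detail), but the argument is the same.
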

\begin{proof}Thompson's group $F$ is of type~$\F_\infty$ \cite{BG84}, and it acts transitively on $k$-element sets of dyadic rationals in $(0,1)$ for every~$k$ \cite[Lemma~4.2]{cannon96}.  The stabilizer in $F$ of any set of $k$ dyadic rationals is isomorphic to a direct product of $k+1$ copies of~$F$, and therefore such stabilizers are of type $\F_\infty$ as well. The result now follows from Theorem~\ref{thm:FinitenessTheorem}. 
\end{proof}

Note also that $\SVG$ is of type $\F_\infty$ whenever the set $S$ is finite.  When $G$ is non-trivial, the methods of Bleak and Lanoue from~\cite{bleak10} can be used to show that such a group does not belong to the Brin--Thompson family (since the corresponding groups of germs are non-abelian), but it would be interesting to classify such groups up to isomorphism.

Alonso proved that any quasi-retract of a group of type $\F_n$ must be of type~$\F_n$~\cite{alonso94}.  By Theorem~\ref{thm:quasi-retracts}, it follows that $G$ must be of type $\F_n$ whenever $\SVG$ is.  Combining this with Theorem~\ref{thm:FinitenessTheorem}, we obtain the following:

\begin{theorem}\label{thm:FinitenessLength}Let $G$ be an oligomorphic group of permutations of a set $S$.  Let $n\in\N$, and suppose that
\begin{enumerate}
\item $G$ is of type\/ $\F_{n-1}$ but not of type\/ $\F_n$, and\smallskip
\item The stabilizer in $G$ of any finite subset of $S$ is of type\/ $\F_{n-1}$.
\end{enumerate}
Then $\SVG$ is of type\/ $\F_{n-1}$ but not of type\/ $\F_n$. \qed
\end{theorem}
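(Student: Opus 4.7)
The plan is to combine the two main theorems already established in the paper. The positive half is essentially immediate from Theorem~\ref{thm:FinitenessTheorem}: applying that theorem with $n-1$ in place of $n$, hypotheses~(i) and~(ii) are exactly what is needed, and the conclusion is that $\SVG$ is of type~$\F_{n-1}$.

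For the negative half, I will exploit the fact that $G$ embeds into $\SVG$ as a quasi-retract. Assuming $n \geq 2$, hypothesis~(i) gives that $G$ is of type~$\F_{n-1}$, hence in particular finitely generated. Oligomorphicity of the action on $S$ forces $G$ to have only finitely many orbits on $S$ (considering orbits of $1$-element subsets), so Theorem~\ref{thm:FinitelyGeneratedTheorem} yields that $\SVG$ is also finitely generated. Theorem~\ref{thm:quasi-retracts} then produces a quasi-retraction $\SVG \to G$. Combining this with Alonso's result~\cite{alonso94}, which says that a quasi-retract of a group of type~$\F_n$ is again of type~$\F_n$, the contrapositive gives that since $G$ is not of type~$\F_n$, neither is $\SVG$.

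The boundary case $n = 1$ needs a separate sentence, since Theorem~\ref{thm:quasi-retracts} is only stated when $\SVG$ is already known to be finitely generated. Here hypothesis~(i) says that $G$ is not of type~$\F_1$, i.e.\ not finitely generated, and the converse direction of Theorem~\ref{thm:FinitelyGeneratedTheorem} then forces $\SVG$ not to be finitely generated either, hence not of type~$\F_1$. Since the $\F_0$ assertion is vacuous, this completes the case $n=1$.

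I do not expect any substantive obstacle, since all the heavy lifting already lives in Theorems~\ref{thm:FinitenessTheorem} and~\ref{thm:quasi-retracts}; the only point deserving care is isolating the $n=1$ case, where the quasi-retract machinery does not apply directly and one must fall back on Theorem~\ref{thm:FinitelyGeneratedTheorem}.
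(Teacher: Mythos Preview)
Your proposal is correct and follows essentially the same approach as the paper: the paper derives the theorem immediately from Theorem~\ref{thm:FinitenessTheorem} for the positive half and from Theorem~\ref{thm:quasi-retracts} together with Alonso's result for the negative half. Your treatment is in fact slightly more careful than the paper's, which does not explicitly separate out the $n=1$ case or note that the hypotheses of Theorem~\ref{thm:quasi-retracts} (finite generation of both $G$ and $\SVG$) must be verified before invoking it.
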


We can use this theorem to produce new simple groups that are of type $\F_{n-1}$ but not of type~$\F_n$.

\begin{corollary}\label{cor:HoughtonGroupExample}If $G$ is the $n$th Houghton group $H_n$ and $S=\{1,\ldots,n\}\times \N$, then $\SVG$ is a simple group of type\/~$\F_{n-1}$ that is not of type\/~$\F_n$.
\end{corollary}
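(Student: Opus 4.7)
The plan is to invoke Theorem~\ref{thm:FinitenessLength} with $n$ as stated and $G=H_n$ acting on $S=\{1,\ldots,n\}\times\N$ in the usual way (each element of $H_n$ is a bijection of $S$ that restricts on each ray $\{i\}\times\N$ to a translation outside a finite set, with the translation amounts summing to zero). Simplicity of $SV_G$ is granted by the general result cited in the introduction (Theorem~\ref{thrm:simple}), so the only task is to verify the three hypotheses of Theorem~\ref{thm:FinitenessLength}: that the action of $H_n$ on $S$ is oligomorphic, that $H_n$ itself is of type $\F_{n-1}$ but not of type $\F_n$, and that the stabilizer in $H_n$ of every finite subset of $S$ is of type $\F_{n-1}$.

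For oligomorphicity, I would argue as follows. The Houghton group contains the finitary symmetric group on $S$, and hence acts transitively on the set of $k$-tuples of distinct points of $S$ having any fixed \emph{profile} $(k_1,\ldots,k_n)$ with $k_1+\cdots+k_n=k$ (where $k_i$ records how many of the $k$ points lie on the $i$-th ray). Since there are only finitely many such profiles, there are only finitely many $H_n$-orbits on $k$-element subsets, so the action is oligomorphic. The finiteness-properties hypothesis on $H_n$ is precisely Brown's classical theorem~\cite{brown87}, which shows that $H_n$ is of type $\F_{n-1}$ but not of type~$\F_n$.

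The remaining step is to analyze stabilizers of finite subsets. Given a finite $F\subseteq S$, write $F=F_1\sqcup\cdots\sqcup F_n$ according to ray. Any element of the setwise stabilizer of $F$ permutes the finite set $F$ and hence the setwise stabilizer contains the pointwise stabilizer as a subgroup of finite index. I claim the pointwise stabilizer is isomorphic to $H_n$ itself: choosing an order-preserving identification of each cofinite set $(\{i\}\times\N)\setminus F_i$ with $\{i\}\times\N$ identifies the pointwise stabilizer with the full Houghton group on the rebuilt rays, since the eventual translation amount on each ray is unchanged by removing a finite set and the sum-zero condition persists. Hence the pointwise stabilizer is of type $\F_{n-1}$, and therefore so is the setwise stabilizer. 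All hypotheses of Theorem~\ref{thm:FinitenessLength} are met, and the corollary follows. The only subtle point, which is otherwise routine, is being careful that the isomorphism of the pointwise stabilizer with $H_n$ preserves the sum-zero translation condition.
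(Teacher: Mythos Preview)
Your proof is correct and follows the same route as the paper: invoke Theorem~\ref{thm:FinitenessLength}, cite Brown for the finiteness properties of $H_n$, verify oligomorphicity, and show the stabilizer of any finite subset is commensurable with $H_n$. Two minor remarks: since $H_n$ contains the full finitary symmetric group on $S$ (as you note), the action is in fact \emph{transitive} on $k$-element subsets---the profile is not an invariant---so your profile argument undersells what you actually have; and the paper identifies the setwise stabilizer of a $k$-subset precisely as $H_n\times\Sigma_k$ (the pointwise stabilizer $\cong H_n$ and the symmetric group on $F$ have disjoint supports, hence commute and generate), which is slightly sharper than your commensurability statement but of course equivalent for the purpose at hand.
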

\begin{proof}Brown proved that $H_n$ is of type $\F_{n-1}$ but not of type $\F_n$~\cite{brown87}.  It is easy to see that $H_n$ acts transitively on $k$-element subsets of $S$ for every~$k$, and the stabilizer of any $k$-element subset is isomorphic to $H_n\times \Sigma_k$, where $\Sigma_k$ denotes the $k$th symmetric group.  In particular, such stabilizers are all of type~$\F_{n-1}$.  The result now follows from Theorem~\ref{thm:FinitenessLength}.
\end{proof}

The family of groups in the corollary above is the second known family $\{G_n\}$ of simple groups such that each $G_n$ is of type\/ $\mathrm{F}_{n-1}$ but not\/~$\mathrm{F}_n$, the first being the family arising from R\"over--Nekrashevych groups described by Skipper, Witzel, and the second author in~\cite{skipper19}.  This also gives the third known infinite family of pairwise non-quasi-isometric simple groups, with the first such example found by Caprace and R\'emy \cite{caprace09,caprace10}, and the second being the Skipper--Witzel--Zaremsky family.

Finally, we should mention that Theorem~\ref{thm:FinitenessTheorem} does not appear to be sharp. We conjecture the following precise characterization of the finiteness properties of $\SVG$, which is true for $n=1$ as seen in Theorem~\ref{thm:FinitelyGeneratedTheorem}, and is reminiscent of the analogous characterization for the wreath products considered in~\cite{bartholdi15}.

\begin{conjecture}
The group $\SVG$ is of type\/ $\F_n$ if and only if the following conditions hold:
\begin{enumerate}
\item The action of $G$ on $S$ has finitely many orbits of $n$-element subsets. \smallskip
\item $G$ is of type\/ $\F_n$. \smallskip
\item For each $1\leq k< n$, the stabilizer in $G$ of any $k$-element subset of $S$ is of type\/~$\F_{n-k}$.
\end{enumerate} 
\end{conjecture}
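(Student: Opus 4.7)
The plan is to prove the two implications separately: the necessity direction should follow largely from the geometric and combinatorial results already established earlier in the paper, while the sufficiency direction requires a substantive sharpening of the Morse-theoretic argument used to prove Theorem~\ref{thm:FinitenessTheorem}.

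For necessity, assume $\SVG$ is of type~$\F_n$ for some $n\geq 1$. Then $\SVG$ is finitely generated, so by Theorem~\ref{thm:FinitelyGeneratedTheorem} the group $G$ is finitely generated and the $G$-action on $S$ has finitely many orbits. Theorem~\ref{thm:quasi-retracts} therefore applies, and combined with Alonso's theorem~\cite{alonso94} that quasi-retracts inherit type~$\F_n$, this immediately yields condition~(ii). For condition~(iii), the approach is to show that, for each $k$-element subset $T\subseteq S$ with $1\leq k<n$, the stabilizer in $\SVG$ of a suitable brick configuration supported over $T$ quasi-retracts onto $\Stab_G(T)$ times a direct product of copies of~$V$, by adapting the construction of Section~\ref{sec:arb_fin_lengths} to the complement of $T$ in $S$. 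Since $V$ is of type~$\F_\infty$ and type~$\F_{n-k}$ is closed under direct products, this forces $\Stab_G(T)$ to be of type~$\F_{n-k}$. Condition~(i) should follow from the observation that in the cellular complex on which $\SVG$ acts (constructed in Sections~\ref{sec:cpxes}--\ref{sec:dlks}), the $\SVG$-orbits of cells of dimension~$n$ are parametrized in part by $G$-orbits of $n$-element subsets of $S$, so cocompactness through dimension~$n$ forces finitely many such orbits.

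For sufficiency, the strategy is to revisit the proof of Theorem~\ref{thm:FinitenessTheorem} and replace its uniform finiteness hypothesis on stabilizers with a graded one. In that argument, the descending link at a configuration with $k$ active coordinates decomposes up to homotopy as the join of a highly-connected matching-type complex with a factor whose equivariance is controlled by a stabilizer in $G$ of a $k$-element subset of~$S$. Crucially, the connectivity contributed by the matching factor grows with~$k$, so by the standard join-of-finiteness-properties bookkeeping (as in~\cite{fluch13}) one only needs the $k$-subset stabilizer to be of type~$\F_{n-k}$ to make the corresponding piece of the sublevel sets sufficiently highly connected. Condition~(i) supplies cocompactness through dimension~$n$, and an equivariant form of Brown's Criterion then yields type~$\F_n$ for~$\SVG$.

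The main obstacle will be the sufficiency direction, and specifically the task of carrying out the trade-off between the connectivity contributed by matching factors and the variable $\F_{n-k}$ hypothesis on stabilizers. One must track, for each $k$, exactly how much connectivity the descending links contribute at that stratum, and then verify that the cumulative estimate allows an equivariant Brown's Criterion to apply uniformly in~$n$. A secondary subtlety in the necessity direction is that constructing the desired quasi-retract onto $\Stab_G(T)\times V^{k}$ presupposes that various intermediate stabilizers are already finitely generated, which is itself part of the conjectured conclusion in lower dimensions; the necessity argument should therefore proceed by induction on~$n$.
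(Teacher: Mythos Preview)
This statement is a \emph{conjecture} in the paper, not a theorem: the authors explicitly present it as open, noting only that the case $n=1$ follows from Theorem~\ref{thm:FinitelyGeneratedTheorem}. So there is no proof in the paper to compare against, and your proposal should be read as an attempt to resolve an open problem.

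The sufficiency direction has a genuine gap. You describe the descending link ``at a configuration with $k$ active coordinates'' as decomposing into a matching-type factor joined with something controlled by $\Stab_G(T)$ for a $k$-element~$T$, and then propose trading matching-complex connectivity against the weaker $\F_{n-k}$ hypothesis on that stabilizer. But in the paper's architecture the stabilizers do not enter through the descending links at all: the analysis of $E_m$ in Section~\ref{sec:dlks} is purely combinatorial and yields connectivity bounds independent of~$G$. The finiteness hypotheses on stabilizers are used only in Corollary~\ref{cor:stabs}, where Proposition~\ref{prop:stabs} identifies a cell stabilizer with (something commensurate to) a product $\prod_i \Stab_G(\Spec(f_i))$, and Brown's criterion as stated in Theorem~\ref{thrm:morse_brown} requires every such stabilizer to be of type~$\F_n$. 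With only $\F_{n-k}$ for $k$-element sets, a simplex built from trees each with $k$-element spectrum has stabilizer of type~$\F_{n-k}$, not~$\F_n$, and nothing in the descending-link connectivity compensates for this within the framework of Theorem~\ref{thrm:morse_brown}. Making the trade-off you envision would require either a different complex or a substantially refined equivariant criterion, neither of which you supply.

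The necessity direction also has problems beyond those you flag. For condition~(i), you argue that type~$\F_n$ forces cocompactness of the $n$-skeleton of the specific complex~$X$, and hence finitely many $G$-orbits of $n$-element subsets. But type~$\F_n$ only guarantees the existence of \emph{some} cocompact $(n-1)$-connected $\SVG$-complex; it says nothing about~$X$ in particular. (The paper's proof of cocompactness in Proposition~\ref{prop:cocpt} runs in the other direction, deducing cocompactness from the oligomorphic hypothesis.) For condition~(iii), the proposed quasi-retract of a brick-stabilizer onto $\Stab_G(T)\times V^{k}$ is asserted but not constructed, and it is not clear what subgroup of $\SVG$ you mean or why it would itself be of type~$\F_n$. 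Only your argument for condition~(ii) is solid, and that much the paper already observes in the paragraph preceding Theorem~\ref{thm:FinitenessLength}.
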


For example, we conjecture that if $G$ is finitely presented, the action of $G$ on $S$ has finitely many orbits of pairs, and the stabilizer in~$G$ of each point of $S$ is finitely generated, then $\SVG$ is finitely presented.

\subsection*{Acknowledgments}  We are grateful to Collin Bleak, James Hyde, Francesco Matucci, Rachel Skipper, and Stefan Witzel for helpful conversations, and to the organizers of the 2018 Spring Topology \& Dynamical Systems Conference in Auburn, Alabama, where this project began. We also thank the anonymous referee for a number of helpful remarks. During the creation of this paper, the first author was partially supported by EPSRC grant EP/R032866/1 and the second author was partially supported by grant \#635763 from the Simons Foundation.

\numberwithin{theorem}{section}
\section{The groups}\label{sec:groups}

In this section we define the family of twisted Brin--Thompson groups.  We begin by briefly reviewing the definition of Brin's groups~$sV$, which we generalize slightly to a family of groups~$SV$, where $S$ is a set of any cardinality. In the case where $S$ is a finite set with $s$ elements, our group $SV$ is isomorphic to Brin's group~$sV$.

\subsection*{The groups \texorpdfstring{$\boldsymbol{SV}$}{SV}}
Let $\C$ be the Cantor set $\{0,1\}^\omega$. Given a set~$S$, the associated \newword{Cantor cube} $\C^S$ is the product space~$\prod_{s\in S}\C$, i.e., the set of all functions $S\to\C$.  

Let $\{0,1\}^*$ be the set of all finite binary sequences, including the empty sequence~$\emptysequence$.  We say that a function $\psi\colon S\to\{0,1\}^*$ has \newword{finite support} if $\psi(s) =\emptysequence$ for all but finitely many~$s\in S$.  Given such a function, the associated \newword{dyadic brick} in $S$ is the set
\[
\brick(\psi) = \{\kappa\in \C^S \mid \text{$\psi(s)$ is a prefix of $\kappa(s)$ for each $s\in S$}\}.
\]
Note that there is a \newword{canonical homeomorphism} $\canon_\psi\colon \C^S\to \brick(\psi)$ defined by
\[
\canon_\psi(\kappa)(s) = \psi(s)\cdot\kappa(s)
\]
for each $\kappa\in\C^S$ and $s\in S$, where $\cdot$ denotes concatenation.  More generally, if $\brick(\varphi)$ and $\brick(\psi)$ are dyadic bricks, we refer to the composition $\canon_{\psi}\circ \canon_{\varphi}^{-1}$ as the \newword{canonical homeomorphism} from $\brick(\varphi)$ to~$\brick(\psi)$.

Given any two partitions $\brick(\varphi_1),\ldots,\brick(\varphi_n)$ and $\brick(\psi_1),\ldots,\brick(\psi_n)$ of $\C^S$ into the same number of dyadic bricks, we can define a homeomorphism $\C^S\to \C^S$ by mapping each $\brick(\varphi_i)$ to the corresponding $\brick(\psi_i)$ by a canonical homeomorphism.  The group of all homeomorphisms of $\C^S$ defined in this fashion is the \newword{Brin--Thompson group}~$\SV$.

\begin{remark}\label{rem:DyadicPartitions}
If $\mathcal{P}$ is a partition of $\C^S$ into dyadic bricks, a \textit{very elementary expansion} of~$\mathcal{P}$ is a partition $\mathcal{P}'$ with the property that each dyadic brick in $\mathcal{P}$ is the union of at most two dyadic bricks in~$\mathcal{P}'$.  A \textit{dyadic partition} of $\C^S$ is any partition that can be obtained from the trivial one by a sequence of very elementary expansions.

The difference between arbitrary partitions into dyadic bricks and dyadic partitions has been a source of some confusion in the literature on Brin--Thompson groups.  Every partition of $\C^S$ into dyadic bricks is a dyadic partition as long as $|S|\leq 2$, but for $|S|\geq 3$ non-dyadic partitions exist, e.g., the partition
\[
B(0,0,0),\; B(1,1,1),\; B(0,1,\emptysequence),\; B(\emptysequence,0,1),\; B(1,\emptysequence,0)
\]
when $|S|=3$.  However, if $\mathcal{P}$ is any partition of $\C^S$ into dyadic bricks, there always exists a dyadic partition $\mathcal{P}'$ of $\C^S$ which can be obtained from $\mathcal{P}$ by a sequence of very elementary expansions.

If $g$ is an element of $\SV$, then it is always possible to find domain and range partitions for $g$ that are dyadic.  Something stronger is true: if $\mathcal{P}_1$ and $\mathcal{P}_2$ are any domain and range partitions for~$g$, then there exist domain and range dyadic partitions $\mathcal{P}_1'$ and $\mathcal{P}_2'$ for $g$ so that each $\mathcal{P}_i'$ can be obtained from $\mathcal{P}_i$ by a sequence of very elementary expansions.

In particular, the confusion in the literature does not result in any actual problems.  One can define $SV$ either using partitions into dyadic bricks or using dyadic partitions, and obtain the same group.
\end{remark}

\subsection*{The twisted group \texorpdfstring{$\boldsymbol{\SVG}$}{SV\_G}}
Now let $G$ be any group of permutations of the set $S$.  For each $\gamma\in G$, let $\twist_\gamma\colon \C^S\to \C^S$ be the \textit{twist homeomorphism} that permutes the coordinates of points in $\C^S$ according to~$\gamma$.  That is, $\twist_\gamma$ is the homeomorphism of $\C^S$ defined by
\[
\twist_\gamma(\kappa)(s) = \kappa(\gamma^{-1} s)
\]
for all $\kappa\in\C^S$ and $s\in S$. Note that this definition is the natural one since $\twist_\gamma(\kappa)(\gamma s)=\kappa(s)$ for all~$s\in S$, i.e.,~the $\gamma s$ coordinate of $\twist_\gamma(\kappa)$ is the same as the $s$ coordinate of~$\kappa$.  Note also that $\twist_{\gamma\gamma'}=\twist_\gamma \twist_{\gamma'}$ for all $\gamma,\gamma'\in G$.

More generally, given any two dyadic bricks $\brick(\varphi)$ and $\brick(\psi)$ and an element $\gamma\in G$, the associated \newword{twist homeomorphism} $\brick(\varphi)\to \brick(\psi)$ is the composition $\canon_\psi\circ \twist_\gamma \circ \canon_{\varphi}^{-1}$, where $\canon_\varphi\colon \C^S\to \brick(\varphi)$ and $\canon_\psi\colon \C^S\to \brick(\psi)$ are the canonical homeomorphisms.

\begin{example}
If $S$ is the two-element set $\{1,2\}$, then $\C^S = \C\times \C$ is known as the \newword{Cantor square}.  In this case, the twist homeomorphism $\twist_{(1\;2)}$ associated with the transposition $(1\;2)$ is the ``diagonal flip'' of the Cantor square, which switches the first and second coordinates of each point.   The transposition $(1\;2)$ also induces a twist homeomorphism between any two dyadic bricks in the Cantor square, which consists of a diagonal flip followed by translation and scaling.
\end{example}

Now, if $\brick(\varphi_1),\ldots,\brick(\varphi_n)$ and $\brick(\psi_1),\ldots,\brick(\psi_n)$ are any two partitions of $\C^S$ into the same number of dyadic bricks and $\gamma_1,\ldots,\gamma_n\in G$, we can define a homeomorphism of~$\C^S$ by mapping each $\brick(\varphi_i)$ to $\brick(\psi_i)$ via the twist homeomorphism associated to~$\gamma_i$.  The group of all homeomorphisms of $\C^S$ defined in this fashion is the \newword{twisted Brin--Thompson group}~$\SVG$.
For example, Figure~\ref{fig:element_of_2V_{S_2}} shows an element of $\SVG$, where $S=\{1,2\}$ and $G$ is cyclic of order two.
\begin{figure}[tb]
\includegraphics{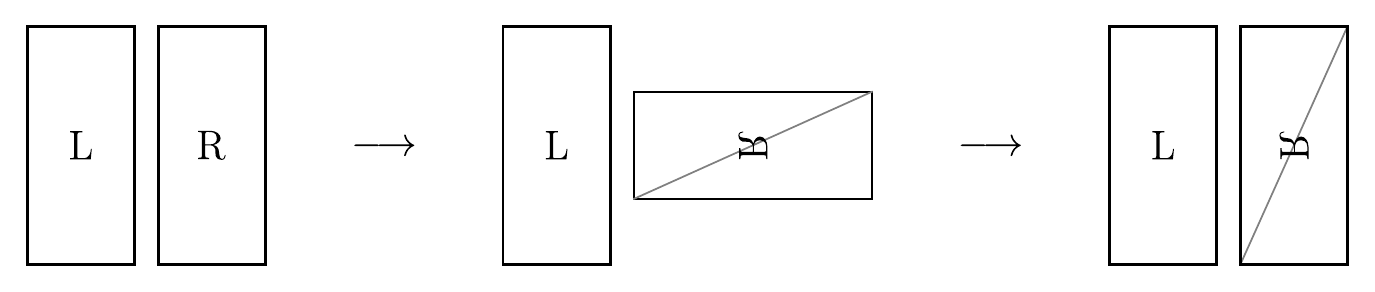}
\caption{An example of an element of $\SVG$, where $S=\{1,2\}$ and $G$ is cyclic of order two.}
\label{fig:element_of_2V_{S_2}}
\end{figure}

\section{The groupoids}\label{sec:groupoids}

In this section we define certain groupoids associated to twisted Brin--Thompson groups. These groupoids provide a convenient conceptual and notational framework for working with elements of the groups.

\subsection*{The spaces \texorpdfstring{$\boldsymbol{\C^S(n)}$}{C\textasciicircum s(n)}}

As before let $\C$ be the Cantor space $\{0,1\}^\omega$ and let $\C^S$ be the space of functions $S\to \C$. Let $\C^S_1,\C^S_2,\ldots$ be an infinite sequence of disjoint copies of $\C^S$, and for $m\in\N$ let
\[
\C^S(m) = \C^S_1 \cup \cdots \cup \C^S_m
\]
be a union of $m$ copies of $\C^S$.
A \newword{dyadic brick} in $\C^S(m)$ is any dyadic brick in any of the individual cubes $\C^S_1,\ldots,\C^S_m$.  Note that it makes sense to talk about canonical homeomorphisms and twist homeomorphisms between dyadic bricks in different cubes.

In the future, we will identify the standard Cantor cube $\C^S$ with the first cube $\C^S_1$, or equivalently with the space~$\C^S(1)$.

\subsection*{The groupoids \texorpdfstring{$\boldsymbol{\SVV}$ and $\boldsymbol{\SVVG}$}{SV and SV\_G}}
If $\brick_1,\ldots,\brick_k$ and $\brick_1',\ldots,\brick_k'$ are partitions of $\C^S(m)$ and $\C^S(n)$ respectively into the same number of dyadic bricks, we can define a homeomorphism $\C^S(m)\to \C^S(n)$ by mapping each $\brick_i$ to $\brick_i'$ by a canonical homeomorphism. The set of all such homeomorphisms is the \newword{Brin--Thompson groupoid~$\SVV$}.  If we instead map each $\brick_i$ to $\brick_i'$ via the twist homeomorphism associated with some $\gamma_i\in G$, then the set of all such homeomorphisms is the \newword{twisted Brin--Thompson groupoid~$\SVVG$}.

Note that the Brin--Thompson group $\SV$ is precisely the subgroup of $\SVV$ consisting of all homeomorphisms $\C^S(1)\to\C^S(1)$.  Similarly, the twisted Brin--Thompson group $\SVG$ is the subgroup of $\SVVG$ consisting of all homeomorphisms $\C^S(1)\to\C^S(1)$.

\subsection*{Germinal twists}
If $h\in\SVG$ and $\kappa$ is a point in $\C^S$, then there exists a dyadic brick~$B$ containing $\kappa$, a dyadic brick $B'$ containing $h(\kappa)$, and an element $\gamma\in G$ so that $h$ maps $B$ to $B'$ via the twist homeomorphism determined by $\gamma$.  In this case, we say that $\gamma$ is the \newword{germinal twist} for $h$ at~$\kappa$, denoted $\gtwist_\kappa(h)$.  Note that this does not depend on the bricks $B$ and $B'$ chosen, for if $B''$ is a sub-brick of $B$, then $h(B'')$ must be a sub-brick of $B'$, and $h$ maps $B''$ to $h(B'')$ by the twist homeomorphism associated to the same element $\gamma\in G$.

The notion of a germinal twist extends to the groupoid $\SVVG$ in an obvious way.  Note that a homeomorphism $h\colon \C^S(m)\to\C^S(n)$ in $\SVVG$ lies in $\SVV$ if and only if the germinal twist at each point in $\C^S(m)$ is the identity element of~$G$.

Note that the germinal twist is multiplicative, in the sense that
\[
\gtwist_\kappa(h_2h_1)= \gtwist_{h_1(\kappa)}(h_2)\,\gtwist_\kappa(h_1)
\]
for any $h_1,h_2\in \SVG$ and $\kappa\in\C^S$.  For $h_1,h_2\in \SVVG$, it follows that
\begin{enumerate}
\item $\SV\hspace{0.08333em}h_1=\SV\hspace{0.08333em}h_2$ if and only if $\gtwist_\kappa(h_1)=\gtwist_\kappa(h_2)$ for all $\kappa\in\C^S$, and\smallskip
\item $\SV\hspace{0.08333em}h_1\hspace{0.08333em}\SV=\SV\hspace{0.08333em}h_2\hspace{0.08333em}\SV$ if and only if
\[
\{\gtwist_\kappa(h_1) \mid \kappa\in\C^S\} = \{\gtwist_\kappa(h_2) \mid \kappa\in\C^S\}.
\]
\end{enumerate}

\subsection*{Direct sum}
In addition to their groupoid structure, $\SVV$ and $\SVVG$ have an additional operation $\oplus$ that we refer to as the \newword{direct sum}.   Given any two elements $h\colon \C^S(m)\to \C^S(n)$ and $h'\colon \C^S(m')\to \C^S(n')$ of $\SVG$, their direct sum is the homeomorphism
\[
h\oplus h'\colon \C^S(m+m')\to \C^S(n+n')
\]
that maps the first $m$ cubes of $\C_{m+m'}^S$ to the first $n$ cubes of $\C_{n+n'}^S$ via $h$, and maps the $m'$ remaining cubes of $\C_{m+m'}^S$ to the $n'$ remaining cubes of $\C_{n+n'}^S$ via~$h'$.

\subsection*{Permutations}
Given a permutation $\sigma$ of $\{1,\ldots,n\}$, the associated \newword{permutation homeomorphism}
\[
p_\sigma\colon \C^S(n)\to \C^S(n)
\]
in $\SVV$ is the homeomorphism that permutes the $n$ Cantor cubes of $\C^S(n)$ via $\sigma$.  That is, $p_\sigma$ is the homeomorphism of $\C^S(n)$ mapping each Cantor cube $\C^S_i$ to $\C^S_{\sigma(i)}$ via the canonical homeomorphism.

\subsection*{\texorpdfstring{$\boldsymbol{G}$-twists}{G-twists} and twisted permutations}
Given $\gamma_1,\ldots,\gamma_n\in G$ with associated twist homeomorphisms $\twist_{\gamma_1},\ldots,\twist_{\gamma_n}$ of $\C^S$, the direct sum
\[
t= \twist_{\gamma_1}\oplus \cdots \oplus  \twist_{\gamma_n}
\]
will be referred to as a \newword{$G$-twist} of~$\C^S(n)$, or simply a \newword{twist} if the group $G$ is clear.  The group of all $G$-twists of $\C^S(n)$ is isomorphic to the direct product $G^n=G\times \cdots \times G$.

If $t$ is a $G$-twist of $\C^S(n)$ and $p_{\sigma}$ is a permutation homeomorphism of $\C^S(n)$, the composition $tp_\sigma$ will be called a \newword{twisted permutation} of $\C^S(n)$.  Let $\G(n)$ be the group of all twisted permutations of $\C^S(n)$; this is isomorphic to the wreath product $G\wr \symgroup{n} = G^n\rtimes \symgroup{n}$, where $\symgroup{n}$ is the symmetric group of rank~$n$.

\subsection*{Simple splits}
Given an $s\in S$, the \textit{simple split with color $s$} is the homeomorphism
\[
x_s\colon \C^S(1) \to \C^S(2)
\]
in $\SVV$ defined as follows.  Let $\{B_{s,0},B_{s,1}\}$ be the partition of $\C^S(1) = \C^S$ into two bricks cut along the $s$ coordinate, so $B_{s,i}$ is the dyadic brick consisting of all $\kappa\in\C^S$ such that $\kappa(s)=i$.  Then the simple split $x_s$ is the homeomorphism that maps $B_{s,0}$ to the first cube $\C_1^S$ of $\C^S(2)$ and $B_{s,1}$ to the second cube $\C_2^S$ of $\C^S(2)$ by canonical homeomorphisms.

The following lemma records some basic relations in the groupoid $\SVVG$, all of which are easy to check.

\begin{lemma}[Relations in $\SVVG$]\label{lem:Relations}\quad
\begin{enumerate}
\item[\normalfont (1)] If $\sigma,\sigma'\in \symgroup{n}$ then $p_{\sigma}p_{\sigma'}=p_{\sigma\sigma'}$.\smallskip
\item[\normalfont (2)] If $\sigma\in \symgroup{n}$ and $\sigma'\in\symgroup{n'}$, then $p_{\sigma}\oplus p_{\sigma'}=p_{\sigma''}$ for some $\sigma''\in \symgroup{n+n'}$.\smallskip
\item[\normalfont (3)] If $\gamma,\gamma'\in G$ then $\twist_{\gamma} \twist_{\gamma'}=\twist_{\gamma\gamma'}$.\smallskip
\item[\normalfont (4)] If $h,h',h''\in \SVVG$ then $(h\oplus h')\oplus h'' = h\oplus (h'\oplus h'')$.\smallskip
\item[\normalfont (5)] If $h_1,h_1',h_2,h_2'\in \SVVG$ and $h_1 h_1'$ and $h_2 h_2'$ are both defined, then $(h_1\oplus h_2)(h_1'\oplus h_2') = (h_1 h_1')\oplus (h_2 h_2')$.\smallskip
\item[\normalfont (6)] If $h_1,\ldots,h_n\in \SVG$ and $\sigma\in\symgroup{n}$ then $(h_1\oplus\cdots\oplus h_n)p_\sigma = p_\sigma(h_{\sigma(1)}\oplus\cdots\oplus h_{\sigma(n)})$.\smallskip
\item[\normalfont (7)] If $s\in S$ and $\gamma\in G$ then $x_{\gamma s}\twist_\gamma = (\twist_\gamma\oplus \twist_\gamma)x_s$.\smallskip
\item[\normalfont (8)] If $s,t\in S$ and $s\ne t$ then $(x_t\oplus x_t) x_s = p_{(2\;3)}(x_s\oplus x_s)x_t$, where $(2\;3)\in\symgroup{4}$.\qed
\end{enumerate}
\end{lemma}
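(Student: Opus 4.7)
My plan is to verify each of the eight relations by choosing domain and range partitions fine enough to resolve both sides, and then checking that the two maps act by the same canonical or twist homeomorphism on each piece; throughout I will use the convention $(fg)(\kappa)=f(g(\kappa))$.

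Relations (1)--(6) are essentially formal identities for the monoidal-groupoid structure of $\SVVG$. Parts (1) and (2) say that $\sigma\mapsto p_\sigma$ is a homomorphism that is compatible with $\oplus$, which follow immediately from the definition of $p_\sigma$ together with the fact that compositions of canonical homeomorphisms are canonical. Part (3) is the computation $\twist_\gamma\twist_{\gamma'}(\kappa)(s)=\twist_{\gamma'}(\kappa)(\gamma^{-1}s)=\kappa((\gamma\gamma')^{-1}s)$, as already noted in the paragraph defining $\twist_\gamma$. Part (4) is the associativity of disjoint union of cubes. Part (5) is the middle-four interchange, which follows by observing that both sides restrict to $h_1 h_1'$ on the first $m$ cubes and to $h_2 h_2'$ on the remaining $m'$ cubes. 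For Part (6), tracking a typical cube $\C^S_i$ through both sides shows that each side sends $\C^S_i$ to $\C^S_{\sigma(i)}$ via the composite of the canonical identification and $h_{\sigma(i)}\in\SVG$, the two sides differing only in the order in which these factors are applied.

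The only relations that really use the structure of a simple split are (7) and (8). For (7) I would compute the effect of both sides on a point $\kappa\in\C^S$; writing $\kappa(s)=b_0\cdot b'$ with $b_0\in\{0,1\}$ and $b'\in\C$, both $x_{\gamma s}\twist_\gamma(\kappa)$ and $(\twist_\gamma\oplus\twist_\gamma)x_s(\kappa)$ land in cube $\C^S_{b_0+1}$ of $\C^S(2)$ with $(\gamma s)$-coordinate equal to $b'$ and $t$-coordinate $\kappa(\gamma^{-1}t)$ for each $t\neq\gamma s$, so the two sides agree. For (8) I would refine $\C^S$ into the four bricks $B_{s,i}\cap B_{t,j}$ with $i,j\in\{0,1\}$. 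A direct trace shows that $(x_t\oplus x_t)x_s$ sends $B_{s,i}\cap B_{t,j}$ to cube $\C^S_{2i+j+1}$ of $\C^S(4)$, while $(x_s\oplus x_s)x_t$ sends it to $\C^S_{2j+i+1}$; the subsequent transposition $p_{(2\;3)}$, which swaps cubes $2$ and $3$ and fixes the others, converts $2j+i+1$ into $2i+j+1$ in each of the four cases. Because $s\neq t$, the canonical homeomorphism onto the target cube does not depend on the order in which the two splits are performed, so the two sides agree as homeomorphisms and not merely at the level of cube labels.

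The main obstacle is not mathematical depth but bookkeeping: one must keep straight the direction of composition, the indexing of cubes in $\oplus$, the convention that $x_s$ strips the first bit of the $s$-coordinate, and the convention that $\twist_\gamma$ sends $\kappa$ to the function $s\mapsto\kappa(\gamma^{-1}s)$. Relation (8), with its four bricks shuffled through a four-cube target, is the most intricate in this respect and is where I would double-check the indexing most carefully; once the conventions are pinned down, each of the eight verifications is routine.
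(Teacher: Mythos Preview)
Your proposal is correct and follows exactly the approach the paper intends: the paper omits the proof entirely, stating only that the relations ``are easy to check'' and appending a \qed to the statement. Your direct verifications of (7) and (8) by tracking points and cube indices are precisely the routine checks the paper leaves to the reader, and your bookkeeping is accurate.
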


\subsection*{Multicolored trees}
A homeomorphism $f\colon \C^S\to \C^S(n)$ is called a \newword{multicolored tree} if $f$ maps the bricks of some dyadic partition $B_1,\ldots,B_n$ of $\C^S$ (see Remark~\ref{rem:DyadicPartitions}) to the Cantor cubes $\C^S_1,\ldots,\C^S_n$ of $\C^S(n)$ by canonical homeomorphisms. 

Equivalently, $f$ is a multicolored tree if either $f$ is the identity on $\C^S$ or
\[
f=p_\sigma(f_0\oplus f_1)x_s
\]
for some permutation homeomorphism $p_\sigma$ of $\C^S(n)$, some multicolored trees $f_0,f_1$, and some simple split $x_s$.  Note that a given multicolored tree might have more than one description of this form, as in Lemma~\ref{lem:Relations}(8).  

\subsection*{Multicolored forests}
A \newword{multicolored forest} is any homeomorphism $\C^S(m)\to \C^S(n)$ of the form
\[
p_\sigma(f_1\oplus \cdots \oplus f_m)
\]
where $p_\sigma$ is a permutation homeomorphism of $\C^S(n)$ and $f_1,\ldots,f_m$ are multicolored trees.  Note that the set of all multicolored forests is closed under composition and direct sums.  Indeed, the set of multicolored forests is precisely the smallest set of elements of $\SVVG$ which contains all permutations and simple splits and is closed under composition and direct sum.

We can also describe multicolored forests in terms of dyadic partitions.  We say that a partition $B_1,\ldots,B_n$ of $\C^S(m)$ into dyadic bricks is a \newword{dyadic partition} if its restriction to each $\C_i^S$ is a dyadic partition of~$\C_i^S$.  Then $f\colon \C^S(m)\to\C^S(n)$ is a multicolored forest if and only if it maps the bricks of some dyadic partition $B_1,\ldots,B_n$ of~$\C^S(m)$ to the Cantor cubes $\C_1^S,\ldots,\C_n^S$ of $\C^S(n)$ by canonical homeomorphisms.  This gives a one-to-one correspondence between multicolored forests $\C^S(m)\to \C^S(n)$ and ordered dyadic partitions $(B_1,\ldots,B_n)$ of~$\C^S(m)$.

The following lemma describes the structure of arbitrary elements of $\SVG$.

\begin{lemma}\label{lem:ForestTwistForest}Every element of $\SVVG$ can be written as $f_2^{-1}tf_1$ for some multicolored forests $f_1,f_2$ and some $G$-twist~$t$.  Such an element lies in $\SVV$ if and only if $t$ is the identity.
\end{lemma}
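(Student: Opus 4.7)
The plan is to extract from the data defining $h$ a pair of compatible dyadic partitions on its source and target, and then read the forests $f_1$, $f_2$ and the twist $t$ off directly.

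Given $h \in \SVVG$ with domain $\C^S(m)$ and codomain $\C^S(n)$, the definition of $\SVVG$ supplies partitions $\brick_1,\ldots,\brick_k$ of $\C^S(m)$ and $\brick_1',\ldots,\brick_k'$ of $\C^S(n)$ into dyadic bricks together with elements $\gamma_1,\ldots,\gamma_k\in G$ so that $h$ restricts on each $\brick_i$ to the twist homeomorphism associated with $\gamma_i$. The first step is to promote these to dyadic partitions in the stricter sense of Remark~\ref{rem:DyadicPartitions} while preserving the twist data. The key observation is that any very elementary expansion of the source partition, say splitting some $\brick_i$ along a coordinate $s$, forces a compatible very elementary expansion of the target partition, splitting $\brick_i'$ along $\gamma_i s$; the two resulting source sub-bricks are still sent to the two resulting target sub-bricks by the same twist $\gamma_i$. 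By Remark~\ref{rem:DyadicPartitions} one can first refine the source partition to a dyadic partition via finitely many very elementary expansions, then refine the (possibly no-longer-dyadic) target partition to a dyadic partition in the same way. The latter refinement pulls back to a further very elementary refinement of the already-dyadic source partition, which therefore remains dyadic. Thus after this two-round procedure both partitions are dyadic and still compatible with $h$ via the twists~$\gamma_i$.

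Next I would invoke the one-to-one correspondence between ordered dyadic partitions and multicolored forests. The ordered dyadic partition $(\brick_1,\ldots,\brick_k)$ of $\C^S(m)$ gives a multicolored forest $f_1\colon \C^S(m)\to\C^S(k)$ sending each $\brick_i$ canonically to $\C^S_i$, and $(\brick_1',\ldots,\brick_k')$ gives $f_2\colon \C^S(n)\to\C^S(k)$ sending each $\brick_i'$ canonically to $\C^S_i$. Setting $t = \twist_{\gamma_1}\oplus\cdots\oplus\twist_{\gamma_k}$, a direct chase of the definitions shows that $f_2\circ h\circ f_1^{-1}$ sends each $\C^S_i$ to itself via $\twist_{\gamma_i}$ and hence equals $t$, yielding $h = f_2^{-1} t f_1$ as required. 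The final clause is immediate: if $t = \id$ then $h = f_2^{-1}f_1$ is a composition of multicolored forests and thus lies in $\SVV$; conversely, if $h \in \SVV$ then the defining data can be chosen with every twist trivial, so the construction produces $t = \id$.

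The main obstacle I anticipate is the bookkeeping in the first step, verifying that the two-round refinement really does produce partitions that are simultaneously dyadic on both sides. What makes this go through is the stability of dyadic partitions under further very elementary expansions, combined with the precise correspondence $s\leftrightarrow\gamma_i s$ between source and target splits. Everything beyond this refinement argument is essentially definitional.
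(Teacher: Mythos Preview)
Your argument follows the paper's approach and is essentially correct: choose compatible partitions witnessing $h$, refine them to dyadic partitions, and read off $f_1$, $f_2$, $t$. Your two-round refinement procedure is a correct (and more explicit) justification of what the paper compresses into ``Refining if necessary as in Remark~\ref{rem:DyadicPartitions}.''

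There is one soft spot in your treatment of the final clause. The statement asserts that an element written as $f_2^{-1}tf_1$ lies in $\SVV$ \emph{if and only if} $t$ is the identity; the natural reading is that this holds for \emph{any} such decomposition, not merely that one exists with $t=\id$. Your converse argument (``if $h\in\SVV$ then the defining data can be chosen with every twist trivial'') only yields the existential version: it shows that some decomposition has $t=\id$, not that the $t$ in an arbitrary decomposition must be trivial. The paper closes this gap by invoking germinal twists: if $h=f_2^{-1}tf_1$ with $t=\twist_{\gamma_1}\oplus\cdots\oplus\twist_{\gamma_k}$, then $h$ has germinal twist $\gamma_i$ on the $i$th domain brick, and $h\in\SVV$ exactly when all germinal twists are trivial, hence exactly when each $\gamma_i=1$. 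This argument is intrinsic to $h$ and applies to any decomposition. Adding a one-line appeal to germinal twists would complete your proof.
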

\begin{proof}Given a homeomorphism $h\colon \C^S(m)\to \C^S(n)$ in $\SVVG$, let $B_1,\ldots,B_k$ be a domain partition and $B_1',\ldots,B_k'$ a range partition so that $h$ maps each $B_i$ to $B_i'$ via the twist homeomorphism associated to some~$\gamma_i\in G$.  Refining if necessary as in Remark~\ref{rem:DyadicPartitions}, we may assume that $B_1,\ldots,B_k$ and $B_1',\ldots,B_k'$ are dyadic partitions.  Then
\[
h = f_2^{-1}(\twist_{\gamma_1}\oplus\cdots\oplus \twist_{\gamma_k})f_1
\]
where $f_1\colon \C^S(m)\to \C^S(k)$ is the multicolored forest that maps each $B_i$ to $\C^S_i$ by a canonical homeomorphism, and $f_2\colon \C^S(n)\to\C^S(k)$ is the multicolored forest that maps each $B_i'$ to~$\C^S_i$ by a canonical homeomorphism.  Moreover, since $h$ has germinal twist~$\gamma_i$ on each~$B_i$, it lies in $\SVV$ if and only if $\gamma_1,\ldots,\gamma_k$ are all the identity, which occurs if and only if $\twist_{\gamma_1}\oplus\cdots\oplus \twist_{\gamma_k}$ is the identity homeomorphism of~$\C^S(k)$.
\end{proof}

It follows from this lemma that every element of the group $\SVG$ can be written as $f_2^{-1}tf_1$ for some multicolored trees $f_1,f_2$ and some $G$-twist~$t$.

It will be very important in what follows to understand how multicolored forests interact with twisted permutations. The following lemma establishes the relationship.

\begin{lemma}\label{lem:swap}
If $f$ is a multicolored forest and $g$ is a twisted permutation such that the composition $fg$ is defined, then $fg=g'f'$ for some twisted permutation $g'$ and some multicolored forest $f'$. Moreover, if $f$ is a multicolored tree then $g'$ is a direct sum of copies of $g$.
\end{lemma}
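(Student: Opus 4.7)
The plan is to prove the tree case first by induction on the number of carets in $f$, then deduce the general forest case. In the tree case $f\colon \C^S(1)\to \C^S(n)$, so $g$ must be a twisted permutation of $\C^S(1)$, i.e., a single $G$-twist $\twist_\gamma$. The base case $f=\id$ is trivial with $g' = \twist_\gamma$. For the inductive step I would write $f = p_\sigma(f_0 \oplus f_1)x_s$. Substituting $\gamma^{-1}s$ for $s$ in Lemma~\ref{lem:Relations}(7) yields the pushing identity $x_s \twist_\gamma = (\twist_\gamma \oplus \twist_\gamma)x_{\gamma^{-1}s}$, which moves the twist past the simple split at the cost of renaming the color. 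Combined with Lemma~\ref{lem:Relations}(5) and the induction hypothesis applied separately to $f_0 \twist_\gamma$ and $f_1 \twist_\gamma$, this produces
\[
f\twist_\gamma = p_\sigma(g_0' \oplus g_1')(f_0' \oplus f_1')x_{\gamma^{-1}s}
\]
with each $g_i'$ a direct sum of copies of $\twist_\gamma$ and each $f_i'$ a multicolored tree. Since every summand of the $G$-twist $g_0' \oplus g_1'$ is the \emph{same} element $\twist_\gamma$, Lemma~\ref{lem:Relations}(6) makes it commute with $p_\sigma$ (the reshuffled direct sum equals the original), so sliding it to the front gives $f\twist_\gamma = g''f'$ with $g''$ still a direct sum of copies of $\twist_\gamma$ and $f' = p_\sigma(f_0' \oplus f_1')x_{\gamma^{-1}s}$ a multicolored tree.

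For the general forest $f = p_\sigma(f_1 \oplus \cdots \oplus f_m)$ and twisted permutation $g = tp_\pi$ with $t = \twist_{\gamma_1} \oplus \cdots \oplus \twist_{\gamma_m}$, Lemma~\ref{lem:Relations}(5) gives $fg = p_\sigma(f_1 \twist_{\gamma_1} \oplus \cdots \oplus f_m \twist_{\gamma_m})p_\pi$. Applying the tree case to each summand produces $f_i \twist_{\gamma_i} = g_i' f_i'$ with $g_i'$ a $G$-twist and $f_i'$ a tree; collecting these yields $fg = p_\sigma T (f_1' \oplus \cdots \oplus f_m')p_\pi$ where $T = g_1' \oplus \cdots \oplus g_m'$ is a $G$-twist. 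To eliminate the trailing $p_\pi$ I would use a direct-sum generalization of Lemma~\ref{lem:Relations}(6) accommodating varying target sizes: permuting the input cubes of $(f_1'\oplus\cdots\oplus f_m')$ by $p_\pi$ is the same as first reordering the summands and then applying a lifted block permutation $q$ of the output cubes, i.e.\ $(f_1'\oplus\cdots\oplus f_m')p_\pi = q(f_{\pi(1)}'\oplus\cdots\oplus f_{\pi(m)}')$. Setting $g' \defeq p_\sigma T q$ (a composition of permutations and $G$-twists, hence an element of the wreath product $\G(N) = G\wr\symgroup{N}$ of twisted permutations) and $f' \defeq f_{\pi(1)}'\oplus\cdots\oplus f_{\pi(m)}'$ (a multicolored forest), the identity $fg = g'f'$ follows.

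The main obstacle is making precise the direct-sum version of Lemma~\ref{lem:Relations}(6) with varying target sizes: one has to carefully track how $p_\pi$ lifts to the block permutation $q$ of the output when the $f_i'$ have different numbers of leaves. This is essentially indexing bookkeeping, but it must be set up cleanly before the forest argument goes through. Once it is in hand, the moreover clause of the tree case is automatic, precisely because the degeneracy of~(6) when all summands of the twist agree allows $\twist_\gamma\oplus\cdots\oplus\twist_\gamma$ to slide past every permutation that arises in the induction without mixing colors, so $g'$ remains a direct sum of copies of $g = \twist_\gamma$ throughout.
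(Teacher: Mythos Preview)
Your proof is correct. The paper takes a slightly different (and terser) route: rather than separating trees from forests, it does a single structural induction on multicolored forests, using that every forest is obtained from permutations and simple splits by iterated composition and direct sum. Relations~(6) and~(7) handle the base cases, and closure under composition and direct sum (via relation~(5)) gives the inductive step. Your two-phase organization---trees first by caret induction, then forests by assembling trees---is more explicit, and it has the advantage that the ``moreover'' clause about $g'$ being a direct sum of copies of $g$ falls out directly from the tree induction, whereas in the paper's argument one must implicitly strengthen the inductive hypothesis (from ``$f$ is a tree'' to ``$g$ is a uniform twist $\twist_\gamma\oplus\cdots\oplus\twist_\gamma$'') to propagate it through compositions. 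The block-permutation generalization of relation~(6) that you flag as the main bookkeeping point is genuine but routine, and the paper's proof sweeps the same issue under the phrase ``it holds for $f_1\oplus f_2$ by relation~(5).''
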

\begin{proof}By the relations in parts (6) and (7) of Lemma~\ref{lem:Relations}, this holds whenever $f$ is a permutation homeomorphism or a simple split.  But if the statement holds for multicolored forests $f_1$ and $f_2$, then it clearly holds for $f_1f_2$ if this is defined, and moreover it holds for $f_1\oplus f_2$ by relation~(5) in  Lemma~\ref{lem:Relations}.  Since every multicolored forest can be obtained from permutation homeomorphisms and simple splits by taking iterated compositions and direct sums, the result follows.
\end{proof}

See Figure~\ref{fig:swap} for an example of Lemma~\ref{lem:swap}.

\begin{figure}[tb]
\includegraphics{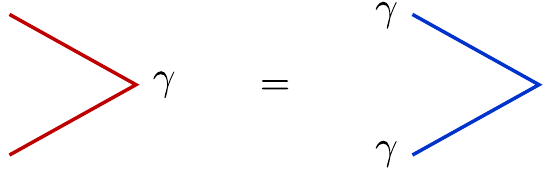}
\caption{With $S=\{r,b\}$ and $\gamma$ satisfying $r=\gamma b$, a picture of the equality $x_r \twist_\gamma = (\twist_\gamma\oplus \twist_\gamma)x_{b}$.}
\label{fig:swap}
\end{figure}

\begin{remark}\label{rmk:cloning}
For readers familiar with \cite{witzel18,witzel19}, the formula $fg=g'f'$ provides a Zappa--Sz\'ep structure, which could be developed into a notion of ``multicolored cloning systems'' using multicolored forests. A number of aspects of the general framework break down when there are infinitely many colors though, so it is unlikely this generalization would be helpful for our current setup. In the future, developing the frameworks in \cite{witzel18,witzel19} to allow for multicolored forests could be interesting.
\end{remark}

\subsection*{Subsets of \texorpdfstring{$\boldsymbol{S}$}{S} and spectrum}
Each subset $T$ of $S$ corresponds to a subgroup $TV^{(S)}$ of $SV$ which is isomorphic to $TV$.  Specifically, if we decompose $\C^S$ as a product $\C^T\times \C^{S\setminus T}$, then $TV^{(S)}$ is the subgroup of elements of $SV$ that act as an element of $TV$ on the $\C^T$ factor and act as the identity on the $\C^{S\setminus T}$ factor.  Similarly, the groupoid $S\V$ has a natural subgroupoid $T\V^{(S)}$ which is isomorphic to~$T\V$.

If $h\in S\V$, the \newword{spectrum} of $h$ is the finite set $\Spec(h)\subseteq S$ of all coordinates that $h$ uses in an essential way.  That is, $\Spec(h)$ is the smallest set $T\subseteq S$ such that $h\in T\V^{(S)}$. This definition has the following properties:
\begin{enumerate}
\item $\Spec(h)$ is empty if and only if $h$ is a permutation homeomorphism.\smallskip
\item $\Spec(x_s) = \{s\}$ for any simple split $x_s$.\smallskip
\item $\Spec(h_1\oplus h_2)=\Spec(h_1)\cup \Spec(h_2)$ for any $h_1,h_2\in\SVV$.\smallskip
\item $\Spec(h_1h_2)\subseteq \Spec(h_1)\cup \Spec(h_2)$ for any $h_1,h_2\in\SVV$ such that $h_1h_2$ is defined, with equality if $h_1$ and $h_2$ are multicolored forests.
\end{enumerate}
Note that these properties can be used to inductively compute the spectrum of any multicolored forest.  For example,
\[
\Spec\bigl(p_{(2\;3\;5)}((x_s\oplus\id_1)x_t\oplus x_u)x_v\bigr) = \{s,t,u,v\}
\]
for any $s,t,u,v\in S$, where $\id_1$ denote the identity map on~$\C^S$.

\section{Generators and simplicity}\label{sec:gens}

We will need two embeddings of $G$ into $\SVG$.  First, let $\iota_\emptysequence\colon G\to \SVG$ be the embedding
\[
\iota_\varnothing(\gamma) = \twist_\gamma \text{.}
\]
Note that this is a homomorphism by part (3) of Lemma~\ref{lem:Relations}. Next, fix an $s\in S$, and define an embedding $\iota_1^s \colon G \to \SVG$ by
\[
\iota_1^s(\gamma) = x_{s}^{-1}({\id_1} \oplus \twist_\gamma)x_{s}
\]
where $\id_1$ denotes the identity map on $\C^S$.  Thus $\iota_1^s(\gamma)$ applies the twist $\twist_\gamma$ to the ``right half'' of $\C^S$ determined by the direction~$s$.

\begin{proposition}\label{prop:BasicGenSets}
$\SVG$ is generated by $\SV\cup \iota_1^s(G)$.
\end{proposition}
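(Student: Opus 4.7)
I will reduce to the normal form provided by Lemma~\ref{lem:ForestTwistForest}: every $h \in \SVG$ can be written as $h = f_2^{-1} t f_1$, where $f_1, f_2 \colon \C^S \to \C^S(k)$ are multicolored trees and $t = \twist_{\gamma_1} \oplus \cdots \oplus \twist_{\gamma_k}$ is a $G$-twist. Rewriting $h = (f_2^{-1} f_1)(f_1^{-1} t f_1)$, the factor $f_2^{-1} f_1$ is a homeomorphism $\C^S \to \C^S$ built entirely from canonical homeomorphisms and permutations of cubes, so it has trivial germinal twist everywhere and lies in $SV$. It therefore suffices to show that $f_1^{-1} t f_1 \in \langle SV \cup \iota_1^s(G)\rangle$.

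Let $B_1, \ldots, B_k$ be the dyadic partition of $\C^S$ associated to $f_1$. Tracing through the definitions, $f_1^{-1} t f_1$ restricts on each $B_i$ to the twist homeomorphism $B_i \to B_i$ via $\gamma_i$, and these actions have disjoint supports. Hence $f_1^{-1} t f_1 = h_1 \cdots h_k$, where each $h_i \in \SVG$ is the \emph{local twist} by $\gamma_i$ on $B_i$ (and the identity elsewhere), and these factors commute pairwise. It now suffices to prove each $h_i \in \langle SV \cup \iota_1^s(G)\rangle$.

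The key step is to exhibit each $h_i$ as an $SV$-conjugate of $\iota_1^s(\gamma_i)$. Observe that $\iota_1^s(\gamma) = x_s^{-1}(\id_1 \oplus \twist_\gamma)x_s$ is itself a local twist: it is the twist via $\gamma$ on $B_{s,1}$ and the identity on $B_{s,0}$. Because both $\C^S \setminus B_i$ and $B_{s,0}$ decompose into finitely many dyadic bricks, I can choose $u \in SV$ that sends $B_i$ to $B_{s,1}$ by the canonical homeomorphism, extended in any compatible way on the complements. Then $u^{-1}\iota_1^s(\gamma_i)u$ is the identity off $B_i$ (since $u$ carries the complement of $B_i$ into $B_{s,0}$, on which $\iota_1^s(\gamma_i)$ acts trivially), and on $B_i$ a direct computation using $\canon_{\psi}\canon_\varphi^{-1}$ for the canonical map $B(\varphi) \to B(\psi)$ gives
\[
u^{-1}\iota_1^s(\gamma_i)u\big|_{B_i} = \canon_{\psi_i}\canon_{\psi_s}^{-1} \cdot \canon_{\psi_s}\twist_{\gamma_i}\canon_{\psi_s}^{-1} \cdot \canon_{\psi_s}\canon_{\psi_i}^{-1} = \canon_{\psi_i}\twist_{\gamma_i}\canon_{\psi_i}^{-1},
\]
which is exactly the twist via $\gamma_i$ on $B_i$, i.e., $h_i\big|_{B_i}$. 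The only real bookkeeping is this final compatibility between canonical and twist homeomorphisms, but it is an immediate unwinding of the definitions in Section~\ref{sec:groups}.
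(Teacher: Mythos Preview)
Your argument is correct and takes a genuinely different route from the paper's proof. The paper argues via double cosets: since $\SV\,g\,\SV$ is determined by the \emph{set} of germinal twists of $g$, it suffices to produce in $H=\langle \SV\cup\iota_1^s(G)\rangle$ an element with any prescribed finite set of germinal twists, which is done by a short induction combining elements with twist set $\{1,\gamma\}$. Your approach is instead constructive: you take the normal form $h=f_2^{-1}tf_1$, peel off $f_2^{-1}f_1\in\SV$, and then express $f_1^{-1}tf_1$ as a commuting product of local twists, each of which you exhibit explicitly as an $\SV$-conjugate of some $\iota_1^s(\gamma_i)$. This is more hands-on and arguably more transparent; the paper's version is slicker but relies on the double-coset characterization set up in Section~\ref{sec:groupoids}.

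One small point to tidy: your conjugation step needs each $B_i$ to be a \emph{proper} brick, since no element of $\SV$ can carry all of $\C^S$ onto $B_{s,1}$. The normal form from Lemma~\ref{lem:ForestTwistForest} does not guarantee $k\geq 2$ (e.g.\ $h=\twist_\gamma$ gives $k=1$), so you should remark that one may always refine the domain and range partitions once more to ensure $k\geq 2$. With that one-line fix the proof is complete.
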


\begin{proof}Let $H = \langle \SV\cup \iota_1^s(G)\rangle$. Since the double coset $\SV g\hspace{0.08333em}\SV$ of an element $g\in \SVG$ is determined by its set of germinal twists, it suffices to prove that there exists an element of $H$ with any given set $\{\gamma_1,\ldots,\gamma_n\}$ of germinal twists.

Note first that elements of $\SV$ have germimal twist set $\{1\}$, and elements of $\iota_1^s(G)$ have germinal twist set $\{1,\gamma\}$ for some $\gamma\in G$, so $H$ contains all elements having these sets of germinal twists.  Next note that if $g\in H$ has germinal twist set $\{\gamma_1,\ldots,\gamma_n\}$ and $\gamma\in G$, then there exist $g_1,g_2\in H$  with germinal twist set $\{1,\gamma\}$ so that $g_1g$ has germinal twist set $\{\gamma\gamma_1\}\cup \{\gamma_2,\ldots,\gamma_n\}$ and $g_2g$ has germinal twist set $\{\gamma\gamma_1\}\cup \{\gamma_1,\ldots,\gamma_n\}$.  It follows easily by induction that $H$ contains elements with any given set of germinal twists.
\end{proof}	

To obtain finite generation conditions for $\SVG$, we need some information about generating sets for~$SV$.  For $T\subseteq S$, recall that $SV$ has a natural subgroup $TV^{(S)}$ that is isomorphic to $TV$.  For the following proposition, we are particularly interested in the case where $T$ is a two-element subset of~$S$, so $TV^{(S)}\cong 2V$.

\begin{proposition}\label{prop:GeneratorsSV}Let $S$ be a set with $|S|\geq 2$, let\/ $\Gamma$ be a connected simple graph with $V(\Gamma) = S$, and for each edge $e\in E(\Gamma)$ let $eV^{(S)}$ be the associated copy of $2V$.  Then the union $\bigcup_{e\in E(\Gamma)} eV^{(S)}$
generates $SV$.
\end{proposition}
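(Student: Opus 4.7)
My plan is to first reduce to a finite tree via spectrum considerations, then induct on the number of vertices, with the technical core of the argument appearing in a ``remove-a-leaf'' step. Every $h \in SV$ has finite spectrum $\Spec(h) \subseteq S$, and since $\Gamma$ is connected there is a finite connected subgraph $\Gamma_0 \subseteq \Gamma$ with $\Spec(h) \subseteq V(\Gamma_0)$, so $h \in V(\Gamma_0) V^{(S)}$. Passing to a spanning tree of $\Gamma_0$ (whose edges still lie in $E(\Gamma)$), the proposition reduces to the following: for a finite tree $\Gamma$ on $n \geq 2$ vertices, the $n - 1$ edge subgroups generate the corresponding copy of $nV$.

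I would then induct on $n$. The base case $n = 2$ is immediate since $\Gamma$ is a single edge. For $n \geq 3$, pick a leaf $v$ of $\Gamma$ with unique neighbor $u$. By the inductive hypothesis applied to the subtree $\Gamma - v$, its edge subgroups generate the copy of $(n-1)V$ acting on the coordinates $V(\Gamma) \setminus \{v\}$. After relabelling so that $u = 1$ and $v = n$, the problem reduces to the key lemma
\[
nV = \langle (n-1)V,\; \{1, n\}V \rangle,
\]
where $(n-1)V$ acts trivially on coordinate $n$ and $\{1, n\}V \cong 2V$ acts on coordinates $1$ and $n$.

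To prove this, I would write each $h \in nV$ as $f_2^{-1} f_1$ for multicolored trees $f_1, f_2$ (Lemma~\ref{lem:ForestTwistForest}) and induct on the total number of $x_n$ splits appearing in $f_1$ and $f_2$. When this number is zero, $h$ lies in $(n-1)V$. For the inductive step, relations~(8) and~(6) of Lemma~\ref{lem:Relations} allow one to commute an $x_n$ split past another split $x_s$ (with $s \neq n$) at the cost of a permutation $p_{(2\;3)}$, which can subsequently be absorbed into the permutation component of the multicolored tree. The plan is to reposition some $x_n$ split so that it occurs as the root of the tree, then cancel it against a suitable element of $\{1, n\}V$, using an element of $(n-1)V$ to first realign bricks in the $\C^{n-1}$-factor; both correction factors lie in $H = \langle (n-1)V, \{1, n\}V \rangle$, and the resulting product has strictly fewer $x_n$ splits.

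The main obstacle will be the combinatorial bookkeeping in this cancellation step. Relation~(8) requires that both halves above an $x_s$ split themselves be split by $x_n$, so an arbitrary $x_n$ cannot always be commuted directly; one must first rebalance neighbouring portions of the tree using an element of $(n-1)V$ (or introduce and then remove auxiliary splits via trivial expansions, as in Remark~\ref{rem:DyadicPartitions}) before the cancellation applies. Verifying that each intermediate correction genuinely lies in $(n-1)V$ or $\{1, n\}V$---and does not secretly introduce additional $x_n$ work that undoes the reduction---is the delicate part of the argument, and is the reason the proof is more than a mechanical application of the relations.
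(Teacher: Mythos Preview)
Your reduction to finite trees and the leaf-removal induction match the paper's structure exactly: the paper also reduces to finite $S$ and inducts by deleting a vertex whose removal keeps the graph connected, arriving at the same key lemma $nV = \langle (n-1)V,\{1,n\}V\rangle$ (equivalently, the star case). The difference is in how that lemma is established. The paper does not prove it from scratch; it cites Hennig and Matucci \cite{hennig12}, whose explicit generating set for $nV$ already lies in $\bigcup_{d=2}^{n}\{1,d\}V^{(\{1,\ldots,n\})}$, which immediately gives the star case and hence the lemma.

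Your proposed direct argument for the lemma is where the genuine gap lies. The quantity you induct on---total number of $x_n$ splits in $f_1$ and $f_2$---is not shown to decrease. Relation~(8), $(x_t\oplus x_t)x_s = p_{(2\;3)}(x_s\oplus x_s)x_t$, only lets you push an $x_n$ toward the root when \emph{both} branches above the relevant $x_s$ carry an $x_n$; forcing this by ``rebalancing'' or by trivial expansions introduces new $x_n$ splits, and you have not explained why the net count goes down. Likewise, once an $x_n$ is at the root of $f_1$, cancelling it requires producing a matching $x_n$ at the root of $f_2$ and then arguing that the conjugated middle piece $x_n^{-1}(\cdots)x_n$ has strictly fewer $x_n$'s \emph{and} that the correction factors used along the way lie in $(n-1)V\cup\{1,n\}V$; neither is verified. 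You are candid that this is the delicate part, but as written it is a plan rather than a proof, and carrying it out would amount to redoing the work that the paper outsources to \cite{hennig12}. If you want a self-contained argument, it is cleaner to prove the star case directly (e.g.\ by exhibiting an explicit generating set as in \cite{hennig12}) rather than via this split-counting induction.
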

\begin{proof}In \cite[Theorem~25]{hennig12}, Hennig and Matucci describe a set of generators for the group $nV = \{1,\ldots,n\}V$, each of which is contained in $\{1,d\}V^{(\{1,\ldots,n\})}$ for some $2 \leq d \leq n$.  Thus the given proposition holds when $S=\{1,\ldots,n\}$ and $E(\Gamma) = \bigl\{\{1,2\},\{1,3\},\ldots,\{1,n\}\bigr\}$, i.e.,~whenever the graph $\Gamma$ is a finite star.

For $S$ finite, we can now prove the proposition by induction on~$|S|$.  The case where $|S|= 2$ is done, so suppose that $|S| \geq 3$.  Let $u$ be a vertex of $\Gamma$ whose removal does not disconnect~$\Gamma$, and let $\Gamma'$ be the (connected) subgraph of $\Gamma$ induced by $S\setminus u$.  Let $v$ be a vertex of $\Gamma$ that is adjacent to~$u$.  By our induction hypothesis, the copies of $2V$ corresponding to the edges of~$\Gamma'$ generate $(S\setminus u)V^{(S)}$, so we can replace $\Gamma'$ by the star $\bigl\{\{v,w\}\bigr\}_{w\in S\setminus\{u,v\}}$.  Since $\{u,v\}\in E(\Gamma)$, the graph $\Gamma$ now contains the entire star $\bigl\{\{v,w\}\bigr\}_{w\in S\setminus v}$, so by induction the proposition holds when $S$ is finite.

The case where $S$ is infinite follows easily, since any infinite connected graph is the union of its finite connected subgraphs.
\end{proof}

\begin{corollary}\label{cor:gen_sets}
Let\/ $\Gamma$ be a connected graph with vertex set $S$ that is invariant under the action of~$G$, and let\/ $\{e_\alpha\}_{\alpha\in \mathcal{I}}$ be a system of representatives for the orbits of $E(\Gamma)$ under~$G$.  Then $\SVG$ is generated by the subgroups $e_\alpha V^{(S)}$ (each of which is isomorphic to~$2V$) together with $\iota_\emptysequence(G)$ and~$\iota_1^s(G)$.
\end{corollary}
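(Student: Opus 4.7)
The plan is to combine the two previous propositions and then replace each subgroup $eV^{(S)}$ by a $G$-conjugate of some $e_\alpha V^{(S)}$, using the embedding $\iota_\emptysequence$ to realize the conjugation inside $\SVG$. This reduces the potentially large generating set $\bigl\{eV^{(S)}\bigr\}_{e\in E(\Gamma)}$ to the orbit representatives $\bigl\{e_\alpha V^{(S)}\bigr\}_{\alpha\in\mathcal{I}}$.

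First I would invoke Proposition~\ref{prop:BasicGenSets} to reduce the problem to showing that $\SV$ lies inside the group $H$ generated by the $e_\alpha V^{(S)}$, $\iota_\emptysequence(G)$, and $\iota_1^s(G)$. Then I would invoke Proposition~\ref{prop:GeneratorsSV}, noting that the hypothesis that $\Gamma$ is connected with $V(\Gamma)=S$ is exactly what is needed; so it suffices to show that $eV^{(S)}\subseteq H$ for every edge $e\in E(\Gamma)$, not merely for the orbit representatives.

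The key computation is the following conjugation identity: for any $T\subseteq S$ and any $\gamma\in G$,
\[
\iota_\emptysequence(\gamma)\,TV^{(S)}\,\iota_\emptysequence(\gamma)^{-1} \;=\; (\gamma T)V^{(S)}.
\]
This follows from the definition of the twist homeomorphism $\twist_\gamma$, which permutes the coordinates of $\C^S$ via $\gamma$: conjugating a homeomorphism that acts non-trivially only on the $\C^T$ coordinates by $\twist_\gamma$ produces a homeomorphism that acts non-trivially only on the $\C^{\gamma T}$ coordinates, via the twist homeomorphism between the corresponding bricks. Given an arbitrary edge $e\in E(\Gamma)$, by definition there exists $\alpha\in\mathcal{I}$ and $\gamma\in G$ with $e=\gamma e_\alpha$, so $eV^{(S)}=\iota_\emptysequence(\gamma)\,e_\alpha V^{(S)}\,\iota_\emptysequence(\gamma)^{-1}\subseteq H$.

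The only mild subtlety is verifying the conjugation identity cleanly; this amounts to tracking coordinates through $\twist_\gamma$ and observing that the conjugate of a canonical homeomorphism between dyadic bricks supported on coordinates in $T$ is a canonical homeomorphism between the correspondingly twisted bricks, which are supported on $\gamma T$. Once this is established the corollary is immediate, with no obstacles of substance.
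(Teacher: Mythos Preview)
Your proof is correct and follows essentially the same route as the paper's: both use the conjugation identity $\iota_\emptysequence(\gamma)\,e_\alpha V^{(S)}\,\iota_\emptysequence(\gamma)^{-1} = (\gamma e_\alpha)V^{(S)}$ to recover all $eV^{(S)}$ from the orbit representatives, then apply Proposition~\ref{prop:GeneratorsSV} to get $SV$ and Proposition~\ref{prop:BasicGenSets} to get $\SVG$. The paper's argument is terser but structurally identical to yours.
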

\begin{proof}For $\gamma\in G$, conjugation by $\iota_\emptysequence(\gamma)$ maps $e_\alpha V^{(S)}$ to $(\gamma e_\alpha)V^{(S)}$.  So the subgroup generated by the $\iota_\emptysequence(G) \cup \bigcup_{\alpha\in\mathcal{I}}e_\alpha V^{(S)}$ contains $eV^{(S)}$ for every $e\in E(\Gamma)$, and thus contains $SV$ by Proposition~\ref{prop:GeneratorsSV}.  By Proposition~\ref{prop:BasicGenSets}, it follows that the given generators generate~$\SVG$.
\end{proof}

\finitegeneration*
\begin{proof} First suppose $G$ is finitely generated and the action of $G$ on $S$ has finitely many orbits. Let $s_1,\ldots,s_m$ be representatives for the orbits of $G$ in~$S$, let $\gamma_1,\ldots,\gamma_n$ be generators for $G$, and let $\Gamma$ be the graph with vertex set $S$ and edge set
\[
\bigl\{ \{\gamma s_1, \gamma s_i\} \;\bigr|\; 2\leq i\leq m, \gamma\in G\bigr\} \cup \bigl\{ \{\gamma s_1, \gamma \gamma_j s_1 \} \;\bigr|\; 1\leq j \leq n\text{, }\gamma_js_1\ne s_1\text{, and }\gamma\in G\bigr\}.
\]
Then $\Gamma$ is connected and $G$-invariant, and has finitely many orbits of edges under~$G$.  By Corollary~\ref{cor:gen_sets}, it follows that $\SVG$ is generated by the subgroups $\{s_1,s_i\} V^{(S)}$ ($2\leq i\leq m$) and $\{s_1,\gamma_j s_1\}V^{(S)}$ ($1\leq j\leq n$ and $\gamma_js_1\ne s_1$) together with $\iota_\emptysequence(G)$ and $\iota_1^s(G)$, all of which have finitely many generators.

Now suppose $G$ is not finitely generated, so there is a filtration $G_1<G_2<G_3<\cdots$ of $G$ by proper subgroups. Then $SV_{G_1}\leq SV_{G_2}\leq SV_{G_3} \leq \cdots$ is a chain of subgroups whose union is~$\SVG$.  Moreover, for each $i$ the set of all germinal twists of all elements of $SV_{G_i}$ equals $G_i$, so each $SV_{G_i}$ is a proper subgroup of $SV_{G_{i+1}}$. We conclude that $\SVG$ is not finitely generated.

Lastly, suppose the action of $G$ on $S$ has infinitely many orbits. Then there is a filtration $S_1\subset S_2\subset S_3\subset\cdots$ of $S$ by proper $G$-invariant subsets. Fix an $s\in S_1$, and for each~$i$ let $H_i$ be the subgroup of $\SVG$ generated by $S_iV^{(S)}$ and~$\iota_1^s(G)$.  (This subgroup is isomorphic to $S_iV_G$ if $G$ acts faithfully on~$S_i$, but in general $S_iV_G$ is a quotient of~$H_i$.) Since each $S_i$ is $G$-invariant, we know that $H_i \cap SV = S_iV^{(S)}$, and therefore each~$H_i$ is a proper subgroup of~$H_{i+1}$. By Proposition~\ref{prop:BasicGenSets} the union of the $H_i$'s is all of~$\SVG$, and therefore $\SVG$ is not finitely generated.
\end{proof}

In the case where $G$ is Thompson's group $F$ acting on the dyadics $S$ in $(0,1)$, as in Corollary~\ref{cor:ThompsonGroupExample}, the proof of the theorem above tells us that $\SVG$ is generated by $\{1/2,3/4\}V^{(S)}$ (a copy of~$2V$) together with $\iota_\emptysequence(F)$ and $\iota_{1}^{1/2}(F)$ (two copies of~$F$).  If instead $G$ is Houghton's group $H_n$ and $S=\{1,\ldots,n\}\times\mathbb{N}$ as in Corollary~\ref{cor:HoughtonGroupExample}, a similar argument shows that $\SVG$ is generated by one copy of $2V$ and two copies of~$H_n$. (In general, $\SVG$ is generated by one copy of~$2V$ and two copies of~$G$ whenever the action of $G$ on $S$ is primitive, since in this case $G$ has a generating set with only one generator that moves~$s_1$.)

\begin{theorem}\label{thrm:simple}
The twisted Brin--Thompson group $\SVG$ is simple. Moreover, if $\SVG$ is finitely generated then it can be generated by two elements of finite order.
\end{theorem}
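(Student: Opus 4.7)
The strategy is to prove that every nontrivial normal subgroup $N \triangleleft \SVG$ equals $\SVG$, and then deduce the finite-order-generators claim from the Bleak--Elliott--Hyde theory of vigorous groups~\cite{BEH20}. Simplicity will be attacked in two stages: first produce a nontrivial element of $SV \cap N$ to obtain $SV \subseteq N$; then promote this to $N = \SVG$ via a ``halving'' trick in the quotient.

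For the first stage, given any nontrivial $g \in N$, the plan is a commutator-fragmentation argument. If $g \in SV$ there is nothing to do; otherwise pick a point $\kappa$ with $\gtwist_\kappa(g) = \gamma \ne 1$ and shrink to a small dyadic brick $B$ around $\kappa$ on which $g$ is constantly the twist homeomorphism for $\gamma$ and with $g(B) \cap B = \varnothing$. For any nontrivial $h \in SV$ supported in $B$, the commutator $[g,h] \in N$ is supported in the disjoint union $B \sqcup g(B)$; using multiplicativity of $\gtwist$ together with constancy of $\gtwist(g) \equiv \gamma$ on $B$, the germinal-twist contributions on $g(B)$ cancel as $\gamma \cdot 1 \cdot \gamma^{-1} = 1$, so $[g,h]$ lies in $SV$. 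Simplicity of $SV$ (by Brin~\cite{brin10} when $|S|$ is finite, and by a straightforward direct-limit argument when $|S|$ is infinite) then gives $SV \subseteq N$.

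For the second stage, write $\phi\colon \SVG \to \SVG/N$ for the quotient. By Proposition~\ref{prop:BasicGenSets} it suffices to show $\iota_1^s(\gamma) \in N$ for each $\gamma \in G$. The first coset observation following the germinal-twist multiplicativity formula in Section~\ref{sec:groupoids} says that two elements of $\SVG$ with identical germinal twist functions lie in the same right $SV$-coset and therefore have the same $\phi$-image; and for any two dyadic bricks $B, B'$ there is $f \in SV$ with $f(B) = B'$, so conjugating by $f$ transports the support of a germinal twist between any two dyadic bricks. Together these show that the $\phi$-image $a(\gamma) \in \SVG/N$ of any element of $\SVG$ whose germinal twist is $\gamma$ on a single dyadic brick and trivial elsewhere is well-defined depending only on $\gamma$, and $a = \phi \circ \iota_1^s$ is a group homomorphism $G \to \SVG/N$. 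The halving step: letting $g_{10}, g_{11} \in \SVG$ have germinal twist $\gamma$ on the sub-bricks $B_{s,10}, B_{s,11}$ of $B_{s,1}$ respectively and trivial elsewhere, the product $g_{10} g_{11}$ has germinal twist $\gamma$ on all of $B_{s,1}$ and trivial on $B_{s,0}$---the same germinal twist function as $\iota_1^s(\gamma)$. This forces $a(\gamma)^2 = a(\gamma)$ in $\SVG/N$, hence $a(\gamma) = 1$ and $\iota_1^s(\gamma) \in N$.

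The main obstacle is Stage~2: Stage~1 is a routine adaptation of the standard Thompson-group fragmentation template, but Stage~2 requires identifying a structural identity in $\SVG$ that collapses $\SVG/\normal{SV}$, and the halving identity above provides exactly this. For the moreover clause, having established simplicity and with $\SVG$ already noted in the introduction to be vigorous in the sense of Bleak--Elliott--Hyde, the main result of~\cite{BEH20} immediately yields that every finitely generated vigorous simple group can be generated by two elements of finite order with one of order two.
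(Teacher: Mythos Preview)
Your argument is correct and takes a genuinely different route from the paper's. The paper does not argue directly with normal subgroups; it instead invokes the Bleak--Elliott--Hyde criterion from~\cite{BEH20}: a group of homeomorphisms of a Cantor space is simple provided it is vigorous, generated by elements of small support, and perfect. The paper then verifies these three conditions---vigour is inherited from $SV$, generation by small-support elements follows from Proposition~\ref{prop:BasicGenSets} since each $\iota_1^s(\gamma)$ has small support, and perfectness is proved by a commutator construction placing each $\iota_1^s(\gamma)$ in $[\SVG,\SVG]$ via the double-coset characterization of germinal twist sets. By contrast, you prove simplicity from scratch: your Stage~1 fragmentation yields $SV\subseteq N$, and your Stage~2 halving identity $a(\gamma)^2=a(\gamma)$ in $\SVG/N$ is an elegant substitute for the paper's perfectness step. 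Your route is more self-contained for simplicity (needing only simplicity of $SV$, which does extend to infinite $S$ by the direct-limit argument you indicate, plus the BEH two-generation theorem for the moreover clause), while the paper's is shorter because the external machinery absorbs the work.

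One small imprecision in Stage~1: having chosen $\kappa$ with $\gtwist_\kappa(g)=\gamma\ne 1$, you assert a brick $B\ni\kappa$ with $g(B)\cap B=\varnothing$, but this can fail if $g(\kappa)=\kappa$. The fix is immediate: since $G$ acts faithfully on $S$, the twist homeomorphism for $\gamma\ne 1$ on any brick is never the identity, so some point in any brick of constant germinal twist $\gamma$ is moved by $g$; replace $\kappa$ by such a point before shrinking.
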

\begin{proof}In \cite{hyde17,BEH20}, Bleak, Elliott, and Hyde prove that a group $K$ of homeomorphisms of a Cantor space $X$ is simple provided that it satisfies the following three conditions:
\begin{enumerate}
\item $K$ is \textit{vigorous}.  That is, for every clopen set $A\subseteq X$ and proper clopen subsets $B$ and $C$ of $A$ there exists a $k\in K$ supported on $A$ such that $k(B)\subseteq C$.\smallskip
\item $K$ is generated by elements of \textit{small support}, i.e., those elements which are supported on proper clopen subsets of~$X$.\smallskip
\item $K$ is \textit{perfect}, i.e., $[K,K]=K$.
\end{enumerate}
Moreover, they prove that every simple, vigorous, finitely generated group of homeomorphisms of the Cantor set is generated by two elements of finite order. Thus it suffices to verify that $\SVG$ satisfies the three conditions above.

Clearly $SV$ is vigorous, so $SV_G$ is as well.  Moreover, since $SV$ is generated by elements of small support and each $\iota_1^s(\gamma)$ has small support, it follows from Proposition~\ref{prop:BasicGenSets} that $SV_G$ is generated by elements of small support.  To prove that $SV_G$ is perfect, recall first from \cite{brin10} that $SV$ is perfect, and therefore $[SV_G,SV_G]$ contains~$SV$.  By Proposition~\ref{prop:BasicGenSets}, it suffices to prove that $[SV_G,SV_G]$ contains $\iota_1^s(\gamma)$ for some fixed $s\in S$ and all $\gamma\in G$.  Let $B=x_s^{-1}(\C_2^S)$ be the support of~$\iota_1^s(\gamma)$, and let $f$ be any element of $SV$ that maps $B$ to a proper subset of itself.  Then the commutator $k=[\iota_1^s(\gamma),f] = \iota_1^s(\gamma)f\iota_1^s(\gamma)^{-1} f^{-1}$ has germinal twist $\gamma$ on $B\setminus f(B)$ and the identity everywhere else.  Since $\iota_1^s(\gamma)$ also has germinal twists $\gamma$ and the identity, there exist elements $h_1,h_2\in SV$ so that $h_1 k h_2 = \iota_1^s(\gamma)$, and therefore $\iota_s^1(\gamma)\in [SV_G,SV_G]$.
\end{proof}

\section{Quasi-retracts}\label{sec:arb_fin_lengths}

If $X$ and $Y$ are metric spaces, a function $f\colon X\to Y$ is said to be \newword{coarse Lipschitz} if there exist constants $C,D>0$ so that
\[
d(f(x),f(x'))\leq C\,d(x,x')+D
\]
for all $x,x'\in X$.  For example, any homomorphism between finitely generated groups is coarse Lipschitz with respect to the word metrics.

A function $\iota\colon X\to Y$ is said to be a \newword{quasi-isometric embedding} if there exist constants $C,D>0$ so that
\[
\frac{1}{C} \,d(x,x') - D \leq d(f(x),f(x'))\leq C\,d(x,x')+D
\]
for all $x,x'\in X$.  Clearly any quasi-isometric embedding is coarse Lipschitz, but the converse need not hold.

A function $\rho\colon X\to Y$ is said to be a \newword{quasi-retraction} if it is coarse Lipschitz and there exists a coarse Lipschitz function $\iota\colon Y\to X$ and a constant $E>0$ so that $d(\rho\circ \iota(y),y) \leq E$ for all $y\in Y$.  If such a function $\rho$ exists, then $Y$ is said to be a \newword{quasi-retract} of~$X$.  In this case, it is not hard to show that $\iota$ must be a quasi-isometric embedding of $Y$ into~$X$.

Now suppose $G$ and $\SVG$ are finitely generated, with word metrics corresponding to some finite generating sets. Our next result is that $G$ is a quasi-retract of $\SVG$. In order to prove this we need a way of canonically extracting an element of $G$ from an element of $SV_G$. We will do this using the notion of germinal twists $\gtwist_\kappa(h)$ from Section~\ref{sec:groupoids}.

\quasiretracts*
\begin{proof}Fix a point $\kappa\in \C^S$, and let $\rho\colon \SVG\to G$ be the function
\[
\rho(h) = \gtwist_\kappa(h).
\]
Note then that $\rho\circ \iota_\varnothing$ is the identity function on $G$.  We claim that $\rho$ is coarse Lipschitz with respect to the \textit{left} word metrics on $\SVG$ and~$G$.

Fix an $s\in S$ and a generating set $A_G$ for $G$, and recall from Corollary~\ref{cor:gen_sets} that $\SVG$ is generated by $\iota_\varnothing(A_G)\cup \iota_1^s(A_G)$ together with finitely many elements of~$SV$.  But:
\begin{enumerate}
\item $\rho\bigl(\iota_\varnothing(a)\hspace{0.08333em}h\bigr) = a\hspace{0.08333em}\rho(h)$ for all $a\in A_G$ and $h\in \SVG$, \smallskip
\item $\rho\bigl(\iota_1^s(a)\hspace{0.08333em}h\bigr) \in \{\rho(h),a\hspace{0.08333em}\rho(h)\}$ for all $a\in A_G$ and $h\in\SVG$, and\smallskip
\item $\rho(h'h)=\rho(h)$ for all $h'\in \SV$ and $h\in \SVG$.
\end{enumerate}
It follows that $\rho$ is nonexpanding and hence coarse Lipschitz.  Since $\iota_\varnothing$ is a homomorphism, it must be coarse Lipschitz as well.  We conclude that $\rho$ is a quasi-retraction and $\iota_\varnothing$ is a quasi-isometric embedding. (In fact for these choices of generating sets, $\iota_\varnothing$ is even an isometric embedding.)
\end{proof}

\section{Complexes}\label{sec:cpxes}

In this section we construct complexes on which the twisted Brin--Thompson groups $\SVG$ act. First we define a directed poset $P_1$ on which $\SVG$ acts and then build the so called Stein complex $X$. Most of the ideas here are directly inspired by the construction of the analogous complexes for $sV$ in \cite{fluch13}.  However, unlike the complex in \cite{fluch13}, the Stein complex here will be locally infinite if $S$ is infinite, which makes a number of arguments more complicated.

\subsection*{The poset \texorpdfstring{$\boldsymbol{P}$}{P}}

If $h\colon \C^S(m')\to \C^S(m)$ is an element of $\SVVG$ write $[h]$ for the coset $\mathcal{G}(m)\hspace{0.08em}h$, where $\G(m)$ is the group of twisted permutations of~$\C^S(m)$. Let $P$ be the set of equivalence classes $[h]$ of elements $h$ of $\SVVG$.

If $h\colon \C^S(m')\to \C^S(m)$ is an element of $\SVVG$, we will refer to $m$ as the \newword{rank} of $h$ and $m'$ as the \newword{corank} of~$h$.  Note that rank and corank are invariant under multiplication by twisted permutations, so it makes sense to refer to the \emph{rank} and \emph{corank} of an element of $P$.  

We can place a partial order on $P$ as follows.  Given $v,w\in P$, we say that $w$ is an \newword{expansion} of $v$, denoted $v\leq w$, if there exists an $h\in\SVVG$ and a multicolored forest $f$ so that $v=[h]$ and~$w=[fh]$.

\begin{lemma}
The relation $\le$ is a partial order on $P$.
\end{lemma}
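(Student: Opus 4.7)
The plan is to verify the three axioms (reflexivity, antisymmetry, transitivity) of a partial order on the set $P$ of cosets $[h] = \G(m)\hspace{0.08em}h$.

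Reflexivity is immediate: for any $v = [h]$, take $f$ to be the identity homeomorphism on $\C^S(m)$, which is a (trivial) multicolored forest, so $v = [h] = [f h]$ and $v \leq v$.

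For transitivity, suppose $v \leq w$ and $w \leq u$. Then we can write $v = [h_1]$, $w = [f_1 h_1]$ for a forest $f_1$, and separately $w = [h_2]$, $u = [f_2 h_2]$ for a forest $f_2$. The equality $[f_1 h_1] = [h_2]$ gives a twisted permutation $g$ with $h_2 = g f_1 h_1$, hence $u = [f_2 g f_1 h_1]$. Here I invoke Lemma~\ref{lem:swap} to rewrite $f_2 g = g' f_2'$ for some twisted permutation $g'$ and multicolored forest $f_2'$. Then $u = [g' f_2' f_1 h_1] = [f_2' f_1 h_1]$, and since a composition of multicolored forests is a multicolored forest, this witnesses $v \leq u$.

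The main obstacle is antisymmetry, where I expect to need a rank argument. Suppose $v \leq w$ and $w \leq v$ with $v = [h]$ of rank $m_0$ and $w$ of rank $m_1$. Writing down the two inequalities and chasing what domain/codomain compositions make sense: $f_1\colon \C^S(m_0)\to\C^S(m_1)$ is a multicolored forest, and from $w \leq v$ one similarly obtains a forest $f_2\colon\C^S(m_1)\to\C^S(m_2)$ where $m_2$ is again the rank of $v$, forcing $m_2 = m_0$. The key structural observation is that any multicolored forest $\C^S(m)\to \C^S(n)$ satisfies $n\geq m$, because a simple split strictly increases rank by one while permutations preserve rank, and an inductive unpacking of the recursive definition $f = p_\sigma(f_1\oplus\cdots\oplus f_m)$ with $f_i$ multicolored trees shows that equality $n = m$ holds iff every $f_i$ is the identity, i.e., iff $f$ is itself a permutation homeomorphism. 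Applied to the chain $m_0 \leq m_1 \leq m_2 = m_0$, this forces $m_0 = m_1$ and hence $f_1 \in \G(m_0)$, so $[f_1 h] = [h]$, i.e., $v = w$.

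The only subtlety to double-check carefully in writing the full proof is the bookkeeping on the recursive structure of multicolored trees to justify the strict rank increase; once that is in hand, each of the three axioms falls out cleanly from the definitions together with Lemma~\ref{lem:swap}.
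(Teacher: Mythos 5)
Your proof is correct and follows essentially the same route as the paper's: reflexivity is trivial, transitivity uses Lemma~\ref{lem:swap} to commute the twisted permutation past the second forest, and antisymmetry comes down to the observation that multicolored forests weakly increase rank and preserve it only when they are permutations. The paper phrases the antisymmetry step slightly differently, deriving that a product $g_1 f_1 g_2 f_2$ equals the identity and concluding $f_1,f_2$ are permutations, but this relies on exactly the rank argument you spell out explicitly.
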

\begin{proof}
Clearly $\leq$ is reflexive.  For antisymmetry, suppose that $v\leq w$ and $w\leq v$ for some $v,w\in P$.  Then there exist $h_1,h_2\in\SVVG$ and multicolored forests $f_1,f_2$ so that $v=[h_1]=[f_2h_2]$ and $w=[h_2]=[f_1h_1]$.  Then $h_1=g_2f_2h_2$ and $h_2=g_1f_1h_1$ for some twisted permutations $g_1$ and $g_2$.  It follows that $g_1f_1g_2f_2$ is the identity, so $f_1$ and $f_2$ must be permutations.  Then $g_1f_1$ is a twisted permutation, so $[h_1]=[h_2]$ and hence $v=w$.

For transitivity suppose that $u\leq v$ and $v\leq w$ for some $u,v,w\in P$.  Then there exist $h_1,h_2\in\SVVG$ and multicolored forests $f_1,f_2$ so that $u=[h_1]$, $v=[h_2]=[f_1h_1]$, and $w=[f_2h_2]$.  Let $g$ be a twisted permutation so that $h_2=gf_1h_1$, and observe that $f_2g=g'f_2'$ for some twisted permutation $g'$ and multicolored forest $f_2'$ by Lemma~\ref{lem:swap}.  Then $w=[f_2h_2]=[f_2gf_1h_1]=[g'f_2'f_1h_1]=[f_2'f_1h_1]$, and since $f_2'f_1$ is a multicolored forest it follows that $u\leq w$.
\end{proof}

Let $P_1$ be the subposet of $P$ consisting of corank-$1$ elements. We are especially interested in the restriction of $\le$ to $P_1$. Recall that a poset is \newword{directed} if any two elements have a common upper bound. It is a standard fact that directed posets have contractible geometric realizations.

\begin{proposition}\label{prop:directed}
The poset $P_1$ is directed, so its geometric realization $|P_1|$ is contractible.
\end{proposition}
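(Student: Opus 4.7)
The plan is to reduce directedness of $P_1$ to a combinatorial statement about dyadic partitions of $\C^S$, and then verify that statement by induction using the commutation relation of Lemma~\ref{lem:Relations}(8).

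First I would simplify representatives. Given $v_1,v_2\in P_1$ with $v_i=[h_i]$ for $h_i\colon \C^S(1)\to\C^S(m_i)$, Lemma~\ref{lem:ForestTwistForest} lets me write $h_i=(f_i')^{-1}t_if_i$ with $f_i\colon \C^S\to\C^S(k_i)$ a multicolored tree, $t_i$ a $G$-twist of $\C^S(k_i)$, and $f_i'\colon \C^S(m_i)\to \C^S(k_i)$ a multicolored forest. Then $f_i'h_i=t_if_i$, and since $t_i$ is a twisted permutation we have $[f_i'h_i]=[f_i]$, so $[h_i]\le [f_i]$. Hence it suffices to find a common upper bound in $P_1$ for two classes $[f_1],[f_2]$ represented by multicolored trees.

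Next I would translate into the language of partitions. A multicolored tree $\C^S\to \C^S(n)$ corresponds bijectively to an ordered dyadic partition of $\C^S$ of length $n$. Two such trees are equivalent modulo $\G(n)$ if and only if they determine the same \emph{unordered} dyadic partition: any element of $\G(n)$ relating them would have to contribute trivial germinal twists on the (surjective) image of a tree, forcing its $G$-twist factor to be trivial so that the element is a pure permutation homeomorphism. Combined with Lemma~\ref{lem:swap} (which lets one push a twisted permutation past any multicolored forest), this shows that the relation $[f_1]\le [f]$ between multicolored trees corresponds exactly to the assertion that the dyadic partition for $f$ is obtainable from the dyadic partition for $f_1$ by a sequence of very elementary expansions. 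The problem therefore reduces to the combinatorial claim that any two dyadic partitions $\mathcal{P}_1,\mathcal{P}_2$ of $\C^S$ admit a common dyadic refinement $\mathcal{Q}$ reachable from each by very elementary expansions.

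For this combinatorial claim I would induct on $|\mathcal{P}_1|+|\mathcal{P}_2|$. The base case in which some $\mathcal{P}_i$ is trivial is immediate. For the inductive step, peel off a final very elementary expansion from each $\mathcal{P}_i$, so $\mathcal{P}_i$ is obtained from a smaller dyadic partition $\mathcal{P}_i''$ by splitting one brick $B_i$ along some coordinate $c_i\in S$. Applying the inductive hypothesis to $\mathcal{P}_1'',\mathcal{P}_2''$ yields a common dyadic refinement $\mathcal{Q}''$; one then inserts the two final splits. If $B_1,B_2$ are disjoint the insertion is direct, while if they overlap, Lemma~\ref{lem:Relations}(8)---which commutes very elementary expansions along distinct coordinates---is used to rearrange the $c_1$- and $c_2$-splits within the overlap so that both can be accommodated. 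The multicolored tree associated with the resulting $\mathcal{Q}$ supplies the sought common upper bound, and contractibility of $|P_1|$ then follows by the standard fact that directed posets have contractible geometric realizations.

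The main obstacle is exactly this combinatorial step. Although $\mathcal{P}_1\wedge\mathcal{P}_2$ is always a common refinement by dyadic bricks, for $|S|\ge 3$ it need not itself be a dyadic partition, and a dyadic refinement of it produced via Remark~\ref{rem:DyadicPartitions} is not automatically reachable from $\mathcal{P}_1$ or $\mathcal{P}_2$ by very elementary expansions. The content of the inductive argument, and specifically its reliance on Lemma~\ref{lem:Relations}(8), is precisely to reconcile simultaneous refinements along different coordinates within overlapping bricks.
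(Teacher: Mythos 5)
Your reduction to classes of multicolored trees via Lemma~\ref{lem:ForestTwistForest}, and the translation of $[f_1]\le[f]$ into reachability of one dyadic partition from another by very elementary expansions, are both correct. The gap is in the inductive step of the combinatorial claim, and it sits exactly where the real difficulty lies. After applying the inductive hypothesis to $\mathcal{P}_1'',\mathcal{P}_2''$ and refining $\mathcal{Q}''$ by the two peeled-off splits, your $\mathcal{Q}$ is visibly reachable from $\mathcal{Q}''$, hence from $\mathcal{P}_1''$ and $\mathcal{P}_2''$ --- but what you need is reachability from $\mathcal{P}_1$ and $\mathcal{P}_2$ themselves. That requires commuting the split of $B_1$ along $c_1$ past the \emph{entire} chain of expansions from $\mathcal{P}_1''$ through $\mathcal{Q}''$ to $\mathcal{Q}$, a chain which may already subdivide $B_1$ many times along many coordinates; in other words you need the lemma that a dyadic partition refining a dyadic partition is reachable from it by very elementary expansions (equivalently, that the restriction of a dyadic partition to a compatible dyadic brick is again dyadic in that brick). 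This lemma is true, and provable by induction on the defining chain, but your proposal never proves it: the case distinction you do make (whether $B_1$ and $B_2$ are disjoint or overlap, with Lemma~\ref{lem:Relations}(8) invoked for the overlap) concerns only the interaction of the two \emph{final} splits with each other, which is the easy part; even when $B_1$ and $B_2$ are disjoint the ``direct'' insertion does not address reachability from $\mathcal{P}_i$. Relatedly, your closing claim that $\mathcal{P}_1\wedge\mathcal{P}_2$ need not be a dyadic partition for $|S|\ge 3$ is false when $\mathcal{P}_1,\mathcal{P}_2$ are themselves dyadic: inducting on a chain of very elementary expansions defining $\mathcal{P}_2$ shows that $\mathcal{P}_1\wedge\mathcal{P}_2$ is dyadic and reachable from each $\mathcal{P}_i$ (this is exactly what Proposition~\ref{prop:JoinForests} records). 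Non-dyadic partitions into dyadic bricks, as in Remark~\ref{rem:DyadicPartitions}, simply do not arise as such intersections. Once that fact is in hand, your whole induction is unnecessary: $\mathcal{P}_1\wedge\mathcal{P}_2$ itself finishes the argument.

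You should also be aware that the paper's proof avoids all of this. It applies Lemma~\ref{lem:ForestTwistForest} not to $h_1$ and $h_2$ separately but to the single element $h_1h_2^{-1}=f_2^{-1}tf_1$; rearranging gives $f_2h_1=tf_1h_2$, so $[f_2h_1]=[tf_1h_2]=[f_1h_2]$ is a common upper bound of $[h_1]$ and $[h_2]$. No passage to partitions, and no induction, is needed. Your route can be repaired (prove the refinement-reachability lemma, or quote Proposition~\ref{prop:JoinForests} and its proof), but as written the key step is asserted rather than proved.
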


\begin{proof}Let $[h_1],[h_2]\in P_1$.  By Lemma~\ref{lem:ForestTwistForest}, there exist multicolored forests $f_1,f_2$ and a $G$-twist $t$ so that $h_1h_2^{-1} = f_2^{-1}tf_1$, and rearranging gives $f_2 h_1 = tf_1h_2$. Then $[h_1]\leq [f_2h_1]$ and $[h_2]\leq [f_1h_2] = [tf_1h_2] = [f_2h_1]$, so $[f_2h_1]$ is a common upper bound.
\end{proof}

\subsection*{Elementary bricks and elementary multicolored forests}
Our next goal is to define the Stein complex for $\SVG$, which is a certain contractible subcomplex of~$|P_1|$.  This will be based on the notion of elementary bricks and elementary multicolored forests.

We say that a dyadic brick $B$ in $\C^S$ is \newword{elementary} if its defining function $\psi\colon S\to \{0,1\}^*$ has the property that $\psi(s)\in \{0,1,\emptysequence\}$ for all $s\in S$.  Such a brick can be obtained from $\C^S$ by cutting at most once in each direction.

A dyadic partition $\{B_1,\ldots,B_n\}$ of $\C^S$ is called \newword{elementary} if each of the bricks $B_i$ is an elementary brick.  Similarly, a dyadic partition of $\C^S(n)$ is elementary if it restricts to an elementary partition of each of the Cantor cubes~$\C_1^S,\ldots,\C_n^S$.

A multicolored forest $f\colon \C^S(m)\to \C^S(n)$ is called \newword{elementary} if there exists an elementary dyadic partition $B_1,\ldots,B_n$ of $\C^S(m)$ such that $f$ maps each $B_i$ to $\C_i^S$ by a canonical homeomorphism.

\subsection*{The lattice of forests}

Let $F_m$ be the subset of $P$ consisting of all vertices of corank $m$ that can be represented by multicolored forests. A vertex of $F_m$ is called \newword{elementary} if it can be represented by an elementary multicolored forest.

\begin{proposition}\label{prop:JoinForests}Every pair of elements $[f_1],[f_2]\in F_m$ have a least upper bound $[f_1]\lor [f_2]$ in $F_m$.  If $[f_1]$ and $[f_2]$ are elementary then so is $[f_1]\lor [f_2]$.
\end{proposition}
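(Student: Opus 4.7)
The plan is to identify $F_m$ with the set of (unordered) dyadic partitions of $\C^S(m)$ under refinement, and then realize $[f_1]\vee[f_2]$ as their common refinement. To each multicolored forest $f\colon\C^S(m)\to\C^S(n)$ I would associate the dyadic partition $\mathcal{P}_f=\{f^{-1}(\C_i^S)\}_{i=1}^n$. If two forests $f,f'$ satisfy $[f]=[f']$, then $f'=gf$ for some twisted permutation $g$; because $f$ has trivial germinal twists everywhere and $f'$ must as well, the $G$-twist part of $g$ is forced to be trivial, so $g$ is a plain permutation and $\mathcal{P}_f=\mathcal{P}_{f'}$. Under this dictionary the order $\leq$ on $F_m$ corresponds to refinement of dyadic partitions, so the proposition reduces to a statement about dyadic partitions.

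Given $\mathcal{P}_1$ and $\mathcal{P}_2$, form the common refinement $\mathcal{P}=\{B_1\cap B_2 : B_1\in\mathcal{P}_1,\; B_2\in\mathcal{P}_2,\; B_1\cap B_2\neq\emptyset\}$. Any non-empty intersection of dyadic bricks $B(\varphi)\cap B(\psi)$ is itself the dyadic brick $B(\chi)$, where in each coordinate $s$ the value $\chi(s)$ is whichever of $\varphi(s),\psi(s)$ is the longer (they must be prefix-comparable for the intersection to be non-empty). So $\mathcal{P}$ is a partition of $\C^S(m)$ into dyadic bricks, and it is manifestly the coarsest common refinement of $\mathcal{P}_1$ and $\mathcal{P}_2$.

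The main obstacle is that, as Remark~\ref{rem:DyadicPartitions} warns, a partition into dyadic bricks need not be a dyadic partition, so I must show that $\mathcal{P}$ can actually be built from the trivial partition by a sequence of very elementary expansions. I would prove this by induction on the number of very elementary expansions needed to produce $\mathcal{P}_2$; the base case in which $\mathcal{P}_2$ is trivial gives $\mathcal{P}=\mathcal{P}_1$. For the inductive step, write $\mathcal{P}_2$ as a very elementary expansion of a dyadic partition $\mathcal{P}_2'$ cutting some brick $B\in\mathcal{P}_2'$ along a coordinate $s$ into $B_0,B_1$. By hypothesis $\mathcal{P}_1\vee\mathcal{P}_2'$ is dyadic; each of its bricks $C$ contained in $B$ has the form $B'\cap B$ with $B'\in\mathcal{P}_1$ and defining function satisfying $\psi_C(s)=\psi_{B'}(s)$, so if $\psi_{B'}(s)=\emptysequence$ then $C$ meets both $B_0$ and $B_1$ and must be cut along $s$, while if $\psi_{B'}(s)\neq\emptysequence$ then $C$ already lies entirely in $B_0$ or $B_1$. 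Applying these finitely many cuts, each a very elementary expansion of a distinct brick, in sequence produces $\mathcal{P}_1\vee\mathcal{P}_2$ as a dyadic partition.

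Having built the join as a dyadic partition, the associated multicolored forest represents $[f_1]\vee[f_2]$ in $F_m$ by the coarsest-common-refinement property translated back through the dictionary. For the elementary case, observe that when $\psi_1(s),\psi_2(s)\in\{0,1,\emptysequence\}$ and the two are prefix-comparable, the longer of them still lies in $\{0,1,\emptysequence\}$; so the intersection of two elementary bricks is elementary, and intersecting an elementary partition with another elementary partition yields an elementary partition. Hence if $\mathcal{P}_1$ and $\mathcal{P}_2$ are elementary dyadic partitions their common refinement is again an elementary dyadic partition, and the representing forest is elementary.
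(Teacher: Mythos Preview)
Your approach is the same as the paper's: identify $F_m$ with unordered dyadic partitions of $\C^S(m)$ ordered by refinement, and realize the join as the partition into nonempty pairwise intersections. The paper simply asserts that this common refinement is again a dyadic partition; you go further and supply an inductive verification, which is a welcome addition given the caveat in Remark~\ref{rem:DyadicPartitions}.

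There is one slip in your inductive step. You claim that for $C=B'\cap B$ one has $\psi_C(s)=\psi_{B'}(s)$, and then split cases on whether $\psi_{B'}(s)=\emptysequence$. But in general $\psi_C(s)$ is the longer of $\psi_{B'}(s)$ and $\psi_B(s)$, and $\psi_B(s)$ need not be $\emptysequence$ (for instance if earlier cuts in the construction of $\mathcal{P}_2'$ already used the coordinate~$s$). The correct dichotomy is whether $\psi_{B'}(s)$ properly extends $\psi_B(s)$ (in which case $C$ lies entirely in one of $B_0,B_1$) or $\psi_{B'}(s)$ is a prefix of $\psi_B(s)$ (in which case $\psi_C(s)=\psi_B(s)$ and $C$ must be cut along~$s$). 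With this correction your induction goes through exactly as intended.
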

\begin{proof}Recall that the multicolored forests $f\colon \C^S(m)\to \C^S(n)$ are in one-to-one correspondence with ordered dyadic partitions $(B_1,\ldots,B_n)$ of $\C^S(m)$, where $f$ maps each $B_i$ to $\C^S_i$ by a canonical homeomorphism.  If $f$ and $f'$ are multicolored forests corresponding to dyadic partitions $(B_1,\ldots,B_r)$ and $(B_1',\ldots,B_s')$ of $\C^S(m)$, then it is easy to check that:
\begin{enumerate}
\item $[f] = [f']$ if and only if $\{B_1,\ldots,B_r\}=\{B_1',\ldots,B_s'\}$, and\smallskip
\item $[f]\leq [f']$ if and only if $\{B_1',\ldots,B_s'\}$ is a refinement of $\{B_1,\ldots,B_r\}$.
\end{enumerate}
That is, the vertices in $F_m$ are in one-to-one correspondence with (unordered) dyadic partitions of $\C^S(m)$, with the partial order $\leq$ corresponding to refinement. But any two dyadic partitions $\{B_1,\ldots,B_r\}$ and $\{B_1',\ldots,B_s'\}$ of $\C^S(m)$ have a least common refinement, namely the dyadic partition consisting of all nonempty intersections $B_i\cap B_j'$.  Moreover, since the nonempty intersection of elementary bricks is an elementary brick, the least common refinement of two elementary dyadic partitions is again elementary.
\end{proof}

Recall that a partially ordered set is called a \newword{lattice} if every pair of elements $v,w$ have a least upper bound $v\lor w$, called the \newword{join}, and a greatest lower bound $v\land w$, called the \newword{meet}.

\begin{corollary}Each poset $F_m$ is a lattice, as is the subset of $F_m$ consisting of elementary vertices.
\end{corollary}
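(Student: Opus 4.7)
The plan is to leverage Proposition~\ref{prop:JoinForests}, which already gives binary joins in $F_m$ and shows that joins of elementary vertices remain elementary; it remains to produce binary meets with the analogous property.

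Given $[f_1], [f_2] \in F_m$ with associated dyadic partitions $P_1, P_2$ of $\C^S(m)$, I would consider the set $\mathcal{C}$ of elements of $F_m$ lying below both $[f_1]$ and $[f_2]$, equivalently, the set of dyadic partitions of $\C^S(m)$ that are simultaneously coarsenings of $P_1$ and $P_2$. The bottom element of $F_m$ (the trivial partition into the $m$ Cantor cubes) lies in $\mathcal{C}$, so $\mathcal{C}$ is nonempty. To extract a maximum, I would verify two things. First, $\mathcal{C}$ is finite: every element of $\mathcal{C}$ corresponds to a partition of the finite block set of the common refinement $P_1 \vee P_2$. Second, $\mathcal{C}$ is closed under the join operation of Proposition~\ref{prop:JoinForests}: if $Q_1, Q_2 \in \mathcal{C}$, then $Q_1 \vee Q_2$ is dyadic and remains a common coarsening of $P_1, P_2$, since each block of $P_i$ lies in a single block of $Q_1$ and in a single block of $Q_2$, and hence in their intersection. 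Taking the join of all elements of the finite set $\mathcal{C}$ then produces a maximum, which is the desired meet $[f_1] \wedge [f_2]$.

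For the subset of elementary vertices, the plan is to establish the stronger assertion that any dyadic coarsening of an elementary partition is automatically elementary. The key observation is that a non-elementary dyadic brick $B(\psi)$ (one with $\psi(s)$ of length at least $2$ for some $s$) admits no elementary sub-brick, because any such sub-brick $B(\psi')$ would need $\psi'(s)$ to extend $\psi(s)$ while satisfying $\psi'(s) \in \{0, 1, \emptysequence\}$, which is impossible. Consequently every block of a dyadic coarsening of an elementary partition must itself be elementary. Applied to the construction above, this shows that if $[f_1]$ and $[f_2]$ are elementary then every element of $\mathcal{C}$ is elementary, and in particular so is their meet.

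The main obstacle, I expect, is verifying closure of $\mathcal{C}$ under joins: one must track carefully how the refinement relation interacts with common refinements in the dyadic setting. Once this is in hand, the finiteness of $\mathcal{C}$ and the elementary reduction both follow quickly from the combinatorics of dyadic bricks developed in Section~\ref{sec:groupoids}.
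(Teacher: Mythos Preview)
Your argument is correct and follows essentially the same route as the paper: the paper observes that $F_m$ has a bottom element, every element has finitely many predecessors, and binary joins exist by Proposition~\ref{prop:JoinForests}, from which the existence of meets follows by exactly the ``take the join of the finite set of common lower bounds'' argument you spell out. The one minor difference is in the elementary case: the paper simply reruns the same argument inside the elementary subposet (using that Proposition~\ref{prop:JoinForests} keeps joins elementary), whereas you prove the slightly stronger fact that every coarsening of an elementary dyadic partition is elementary, which shows the $F_m$-meet itself is elementary; both are fine, and your version yields the extra information that the inclusion of the elementary subposet preserves meets. Incidentally, the step you flag as the ``main obstacle''---closure of $\mathcal{C}$ under joins---is automatic from the definition of least upper bound, so no special tracking of dyadic combinatorics is needed there.
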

\begin{proof}Note that $F_m$ has a unique minimal element, namely the equivalence class of the trivial corank-$m$ forest.  Moreover, every element of $F_m$ has finitely many predecessors with respect to~$\leq$.  Since finite subsets of $F_m$ have least upper bounds by Proposition~\ref{prop:JoinForests}, it follows easily that any finite subset of $F_m$ has a greatest lower bound as well.  A similar argument applies to the elementary vertices.
\end{proof}

\subsection*{The Stein complex}
Given vertices $v,w\in P$, we say that $w$ is an \newword{elementary expansion} of $v$, denoted $v\preceq w$, if there exists an $h\in \SVVG$ and an elementary multicolored forest $f$ so that $v=[h]$ and $w=[fh]$. See Figure~\ref{fig:elementary} for an example.
\begin{figure}[tb]
\includegraphics{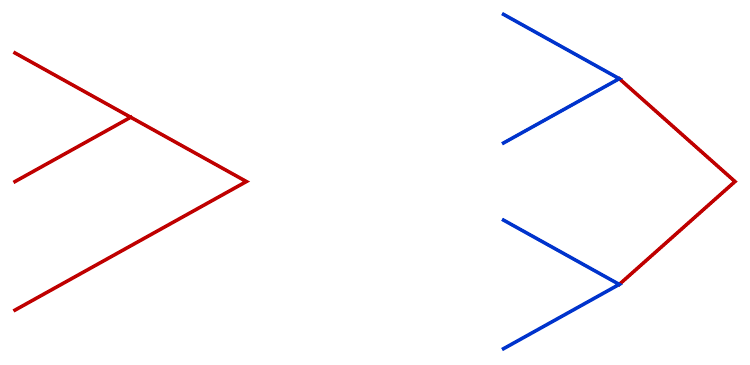}
\caption{Let $S=\{r,b\}$. The first picture is the multicolored forest $f_1=(x_r\oplus \id_1) x_r$, which is not elementary. The second picture is the multicolored forest $f_2=(x_b\oplus x_b) x_r$, which is elementary.}
\label{fig:elementary}
\end{figure}

Now, recall that a simplex of the geometric realization $|P_1|$ is a finite chain $v_0<\cdots < v_k$ of elements of~$P_1$.  Such a simplex is \newword{elementary} if $v_0\preceq v_k$ (and hence $v_i\preceq v_j$ for all $i\leq j$).  The elementary simplices form a subcomplex $X$ of $|P_1|$, which we refer to as the \newword{Stein complex} for~$\SVG$. The term ``Stein complex'' is a reference to the analogous complex constructed by Stein for Thompson's group $F$ in \cite{stein92}.

We will use interval notation in $P$, for instance $[v,w]=\{u\mid v\le u\le w\}$, with similar definitions for open and half-open intervals. Call an interval $[v,w]$ \emph{elementary} if $v\preceq w$.

\begin{proposition}[Elementary core]\label{prop:elem_core}
For any $v<w$ in $P$ the set of elementary expansions of $v$ that lie in $[v,w]$ has a maximum element, which is not equal to $v$.
\end{proposition}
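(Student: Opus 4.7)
The plan is to reduce the statement to a lattice-theoretic question in $F_m$, where $m$ is the rank of $v$. First I will fix a representative $h\colon \C^S(1)\to \C^S(m)$ of $v$ and show that the map $[f]\mapsto [fh]$ gives an order-preserving bijection between $F_m$ and $\{u\in P_1:u\geq v\}$, sending $[\id]$ to $v$. The key point is that elementariness of a forest is invariant under composition with a twisted permutation on either side: this uses Lemma~\ref{lem:swap} to swap a twisted permutation past a forest, together with the fact that $\twist_\gamma$ carries the elementary brick $\brick(\psi)$ to the elementary brick $\brick(\psi\circ\gamma^{-1})$, so that cube-permutations and $G$-twists both preserve elementariness of the domain partition. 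Writing $[f_0]\in F_m$ for the element corresponding to $w$, the statement then reduces to showing that the set $E$ of elementary elements of $F_m$ lying in the interval $[[\id],[f_0]]$ has a maximum, and that this maximum is strictly greater than $[\id]$.

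Next I will argue that $E$ is finite and closed under joins inside $F_m$. Finiteness is immediate from the bijection (from the proof of Proposition~\ref{prop:JoinForests}) between $F_m$ and unordered dyadic partitions of $\C^S(m)$: elements $\leq [f_0]$ correspond to partitions of $\C^S(m)$ coarser than the partition $\mathcal{P}_0$ associated to $f_0$, and since $\mathcal{P}_0$ has only finitely many bricks there are only finitely many such coarsenings. Join-closure follows from Proposition~\ref{prop:JoinForests} (the join of two elementary elements is elementary) together with the fact that $[f_0]$ is a common upper bound for the elements of $E$, so the join remains below $[f_0]$. A finite nonempty join-closed subset of a poset has a maximum, namely the join of all its elements; thus $[f^*]\defeq\bigvee E$ is the desired maximum.

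Finally, to check that $[f^*]\neq [\id]$ I will invoke the inductive description of dyadic partitions from Remark~\ref{rem:DyadicPartitions}. Since $v<w$, the partition $\mathcal{P}_0$ is strictly finer than the trivial partition $\{\C^S_1,\ldots,\C^S_m\}$ of $\C^S(m)$, so $\mathcal{P}_0$ is obtained from the trivial partition by a nonempty sequence of very elementary expansions. The first expansion in this sequence splits a single cube $\C^S_i$ along a single coordinate $s\in S$, producing an elementary dyadic partition $\mathcal{P}_1$ strictly finer than trivial and still refined by $\mathcal{P}_0$. The corresponding element $[f_1]\in F_m$ lies in $E$ and satisfies $[f_1]>[\id]$, so $[f^*]\geq [f_1]>[\id]$. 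The main obstacle is the first step: verifying carefully that ``elementary'' transports cleanly under the bijection $F_m\cong\{u\in P_1:u\geq v\}$, which requires chasing the swap relations in Lemma~\ref{lem:swap} and keeping track of how twists permute the labels of coordinates. Once this bookkeeping is in place, the remainder is routine finite lattice theory.
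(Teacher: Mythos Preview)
Your proposal is correct and is essentially the paper's proof: the paper simply writes ``without loss of generality $v=[\id_m]$ and $w=[f]$'' for the bijection $F_m\cong\{u\geq v\}$ you spell out, leaving the transport of elementariness (your ``main obstacle'') implicit, and then finishes with the same join-of-elementary-vertices argument via Proposition~\ref{prop:JoinForests} and the same nontriviality step (phrased as $f=p_\sigma(f_1\oplus f_2)x_s$ rather than via very elementary expansions, but this is the same observation). One small correction: the proposition is stated for $P$, not $P_1$, so your representative $h$ should be allowed arbitrary corank rather than corank~$1$; this changes nothing in the argument.
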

\begin{proof}Without loss of generality $v=[\id_m]$ and $w=[f]$ for some multicolored forest $f$ of corank $m$.  Then the elementary expansions of $v$ are precisely the elementary multicolored forests of corank~$m$, of which only finitely many lie in $[v,w]$.  By Proposition~\ref{prop:JoinForests}, these elementary multicolored forests have a least upper bound~$u$.  This is again an elementary multicolored forest and it lies in $[v,w]$, so $u$ is the desired maximum.

Moreover, since $v\ne w$ the multicolored forest $f$ is not a permutation, so we can write
\[
f = p_\sigma(f_1\oplus f_2)x_s
\]
for some permutation homeomorphism $p_\sigma$, some multicolored forests $f_1$ and $f_2$, and some simple split $x_s$.  Then $[x_s]$ is an elementary expansion of $v$ that lies in $[v,w]$ and is strictly greater than $v$, so the maximum elementary expansion of $v$ in $[v,w]$ is not equal to $v$.
\end{proof}

Given an interval $[v,w]$ in $P$, we refer to the maximum elementary expansion of $v$ that it contains as the \newword{elementary core} of $w$ relative to~$v$, denoted $\core_v(w)$.

\begin{proposition}\label{prop:cible}
The Stein complex $X$ is homotopy equivalent to $|P_1|$, hence contractible.
\end{proposition}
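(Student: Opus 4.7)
The plan is to use the elementary core function of Proposition~\ref{prop:elem_core} to set up a discrete Morse matching on the face poset of $|P_1|$ whose critical cells are exactly the simplices of~$X$; together with Proposition~\ref{prop:directed}, this will yield the contractibility of~$X$.

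For each simplex $\sigma=(v_0<v_1<\cdots<v_k)$ of $|P_1|$ not in~$X$ (equivalently, with $v_0\not\preceq v_k$), set $u(\sigma):=\core_{v_0}(v_k)$. Proposition~\ref{prop:elem_core} guarantees that $u(\sigma)$ is well-defined and satisfies $v_0<u(\sigma)<v_k$, the right-hand inequality being strict because otherwise $v_0\preceq v_k$ would place $\sigma$ in~$X$. I would pair $\sigma$ with $\sigma\cup\{u(\sigma)\}$ whenever $u(\sigma)\notin\sigma$, and with $\sigma\setminus\{u(\sigma)\}$ whenever $u(\sigma)\in\sigma$. Neither operation alters the minimum $v_0$ or maximum $v_k$ of the chain, so both members of each pair have the same distinguished vertex~$u$ and both lie outside~$X$, producing a fixed-point-free involution whose unmatched (critical) simplices are exactly those of~$X$.

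Acyclicity would be verified by a monotonicity argument in the modified Hasse diagram (where matched covering edges are reversed). In any hypothetical directed cycle, the chain's minimum~$v_0$ and maximum~$v_k$ must be constant throughout, since removing $v_0$ or $v_k$ is an unmatched down move that changes the min or max, whereas up (matched, reversed) moves insert only the interior vertex~$u$ and thus cannot restore a removed min or max. With $v_0$ and $v_k$ constant, the core~$u$ is constant too, and the property ``$u$ belongs to the current simplex'' can only flip from false to true (via an up move) and is preserved by all other moves. Closing the cycle requires this property to return to its initial value, so no up moves may occur; the cycle must then consist entirely of unmatched down moves, each of which strictly decreases dimension---a contradiction.

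The main obstacle will be that $|P_1|$ is infinite-dimensional when $S$ is infinite, so the classical finite-complex version of Forman's discrete Morse theorem does not directly apply. I would address this by filtering $|P_1|$ by the rank of the maximum vertex appearing in a simplex, applying the collapse within each finite rank level (the closed star of a rank-$n$ vertex $w$ being the cone on the realization of the sub-poset $\{v\in P_1:v<w\}$), and passing to the direct limit to obtain the desired deformation retraction of $|P_1|$ onto~$X$.
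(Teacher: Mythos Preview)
Your discrete Morse matching is correct, and your acyclicity argument is sound: once you observe that up-moves never change $\min\sigma$ or $\max\sigma$, the monotonicity of min and max along a hypothetical cycle forces both to be constant, and then the presence of $u$ is an absorbing property, killing the cycle.  The paper takes a different but closely related route.  Rather than matching simplices, it filters $|P_1|$ by the \emph{length} $\mathrm{rank}(\max\sigma)-\mathrm{rank}(\min\sigma)$ of the interval containing a simplex, so that $X_n\setminus X_{n-1}$ decomposes into disjoint blocks indexed by non-elementary intervals $[v,w]$ of length exactly~$n$; it then shows $|(v,w)|$ is contractible by applying Quillen's poset lemma to the map $\core_v\colon (v,w)\to(v,\core_v(w)]$, and concludes that attaching each contractible $|[v,w]|$ along its contractible boundary is a homotopy equivalence.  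Your matching is the combinatorial shadow of exactly this contraction.

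Where your proposal needs repair is the last paragraph.  The max-rank filtration is compatible with your matching, but the parenthetical justification (that the closed star of a rank-$n$ vertex $w$ is a cone over $|\{v<w\}|$) is beside the point: collapsing that cone would kill elementary simplices along with non-elementary ones, and in any case your matching inside that star depends on $\min\sigma$, not just on~$w$, so it is not the cone collapse.  The filtration that makes your Morse argument go through cleanly is the paper's interval-length filtration: your matching preserves both $\min\sigma$ and $\max\sigma$, hence preserves interval length, and on each block of simplices with fixed $(\min,\max)=(v,w)$ it is precisely the standard perfect matching $\sigma\leftrightarrow\sigma\triangle\{\core_v(w)\}$.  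These blocks are pairwise disjoint within $X_n\setminus X_{n-1}$, so the elementary collapses can be performed simultaneously, yielding a deformation retraction $X_n\to X_{n-1}$ without any appeal to a general infinite-complex Morse theorem.  (Alternatively, your own argument already shows every V-path has length bounded by its initial rank gap, which is the finiteness hypothesis needed for the infinite version of Forman's theorem; but the interval-length filtration makes this explicit and avoids having to locate or prove such a version.)
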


\begin{proof}
We proceed by a now-standard strategy (see for example \cite{brown92,fluch13,bux16,belk16}). Observe that each interval $[v,w]$ in $P_1$ is itself a directed set, whose geometric realization $|[v,w]|$ is a contractible subcomplex of~$|P_1|$.  Since every finite chain in $P_1$ is contained in an interval, every simplex in $|P_1|$ is contained in some $|[v,w]|$, so $|P_1|$ is the union of these subcomplexes.  Note that the Stein complex $X\subseteq |P_1|$ is precisely the union of the subcomplexes $|[v,w]|$ for which $[v,w]$ is an elementary interval.

Now, define the \textit{length} of an interval $[v,w]$ to be $m'-m$, where $m$ is the rank of $v$ and $m'$ is the rank of~$w$.  Then we can write $|P_1|$ as an ascending union of subcomplexes
\[
X=X_0 \subseteq X_1 \subseteq X_2 \subseteq \cdots
\]
where each $X_n$ is the union of $X_{n-1}$ with the geometric realizations of all non-elementary intervals $[v,w]$ of length~$n$.  It suffices to prove that each inclusion $X_{n-1}\to X_n$ is a homotopy equivalence.  

Note first that each simplex in $X_n$ which is not in $X_{n-1}$ lies in the geometric realization of a unique non-elementary interval $[v,w]$ of length~$n$.  Thus it suffices to prove that the intersection $|[v,w]|\cap X_{n-1}$ is contractible for each non-elementary interval $[v,w]$ of length~$n$.  This intersection is precisely the union $|[v,w)|\cup |(v,w]|$, which is isomorphic to the suspension of $|(v,w)|$ with $v$ and $w$ serving as suspension points, so it suffices to prove that each $|(v,w)|$ is contractible.

Let $r\colon |(v,w)|\to |(v,\core_v(w)]|$ be the retraction induced by the order-preserving function $\core_v\colon (v,w)\to (v,\core_v(w)]$.  Quillen observed in \cite[Section~1.3]{quillen78} that if $f,g\colon A\to B$ are order-preserving functions between posets and $f(a)\leq g(a)$ for all $a\in A$ then the induced maps from $|A|$ to $|B|$ are homotopic.  Since $\core_v(x)\leq x$ for all $x\in (v,w)$, it follows that $r$ is homotopic to the identity map on $|(v,w)|$.  But $|(v,\core_v(w)]|$ is clearly contractible, being the cone of $|(v,\core_v(w))|$ with cone point~$\core_v(w)$, and therefore $|(v,w)|$ is contractible as well.
\end{proof}

\section{Actions on the complexes}\label{sec:action}

In this section we prove that if $G$ is oligomorphic and if every stabilizer in $G$ of a finite subset of $S$ is of type $\F_n$ (including $G=\Stab_G(\emptyset)$ itself), then the only remaining step toward proving that the twisted Brin--Thompson group $SV_G$ is of type $\F_n$ (Theorem~\ref{thm:FinitenessTheorem}) is a descending link analysis. This analysis will then happen in the next section. It seems worth remarking that these conditions on orbits and stabilizers of finite subsets are very similar to those used in \cite{bartholdi15} to characterize finiteness properties of wreath products.

The strategy is similar in spirit to that used in \cite{fluch13} for the Brin--Thompson groups~$sV$. For those familiar with the approach, the problem here is that elementary intervals can be arbitrarily long if we have an infinite set of colors $S$. This makes it difficult to apply any existing techniques that have been used for related groups. For instance one could try and use the ``cloning system'' tools from \cite{witzel18} and \cite{witzel19}, but these are quickly stymied by the fact that elementary intervals can be arbitrarily long, so the analog of the crucial Observation~4.6 in \cite{witzel18} fails (but see Remark~\ref{rmk:cloning} for further thoughts on the connection to cloning systems). For the same reason, the ``groundedness'' of Belk and Forrest (see \cite[Theorem~6.2]{belk16} and \cite[Theorem~4.9]{belk19}) does not apply directly.

We will use a combination of Bestvina--Brady Morse theory \cite{bestvina97} and Brown's Criterion \cite{brown87} that by now is quite standard, which we set up now.

\begin{definition}[Morse function, sublevel set, descending link]
Let $Y$ be an affine cell complex and $\phi \colon Y\to \R$ a map. We call $\phi$ a \emph{Morse function} if it is affine and non-constant on positive dimensional cells and the image $\phi(Y^{(0)})\subseteq \R$ is closed and discrete. For $m\in\R$, the \emph{sublevel complex} $Y_m$ is the induced subcomplex of $Y$ with vertex set $Y^{(0)}\cap \phi^{-1}((-\infty,m])$. For a vertex $y\in Y$ the \emph{descending link} $\dlk y$ of $y$ with respect to $\phi$ is the link of $y$ in $Y_{\phi(y)}$. 
\end{definition}

\begin{theorem}\label{thrm:morse_brown}
Let $H$ be a group acting cellularly on a contractible affine cell complex $Y$ and let $\phi\colon Y\to \R$ be an $H$-invariant Morse function. Suppose that each sublevel set $Y_m$ is $H$-cocompact, that the stabilizer in $H$ of any cell of $Y$ is of type\/ $\F_n$, and that for each $k\in\N$ there exists $m\in\N$ such that for all vertices $y\in Y$ with $\phi(y)\ge m$, the descending link\/ $\dlk y$ is $k$-connected. Then $H$ is of type\/~$\F_n$.
\end{theorem}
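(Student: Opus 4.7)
The plan is to apply Brown's Criterion \cite{brown87} to the filtration of $Y$ by its sublevel sets. First I would use that $\phi(Y^{(0)})$ is a closed discrete subset of $\R$ to enumerate its values (ordered upward from some chosen threshold) as $a_0 < a_1 < a_2 < \cdots$, yielding an ascending filtration $Y = \bigcup_m Y_{a_m}$ by $H$-cocompact subcomplexes. Each cell stabilizer is already of type $\F_n$ by hypothesis, so what remains is to check that the filtration is essentially $(n-1)$-connected in Brown's sense.

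The core step is a standard Bestvina--Brady Morse-theoretic analysis. Because $\phi$ is affine and non-constant on every positive-dimensional cell, each such cell has a unique vertex on which $\phi$ attains its maximum. Passing from $Y_{a_{m-1}}$ to $Y_{a_m}$ therefore amounts to attaching, for every vertex $y$ with $\phi(y) = a_m$, a closed neighborhood of $y$ in $Y$ which deformation retracts onto the cone on $\dlk y$ glued along $\dlk y \subseteq Y_{a_{m-1}}$. Consequently $Y_{a_m}$ is homotopy equivalent to the space built from $Y_{a_{m-1}}$ by coning off each descending link at height $a_m$, and if every such $\dlk y$ is $k$-connected then the pair $(Y_{a_m}, Y_{a_{m-1}})$ is $(k+1)$-connected.

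Given $k \in \N$, the hypothesis on descending links provides an index $m_k$ such that $\dlk y$ is $k$-connected for every vertex $y$ with $\phi(y) \geq a_{m_k}$. Iterating the relative connectivity statement above, the inclusion $Y_{a_{m_k}} \hookrightarrow Y_{a_m}$ is $k$-connected for all $m \geq m_k$, and since $Y$ is contractible the same holds for the inclusion $Y_{a_{m_k}} \hookrightarrow Y$. Letting $k$ grow shows that the filtration is essentially $j$-connected for every $j$, so in particular it is essentially $(n-1)$-connected. Combined with $H$-cocompactness of each $Y_{a_m}$ and the $\F_n$ stabilizer hypothesis, Brown's Criterion then yields that $H$ is of type $\F_n$.

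The main technical obstacle is justifying the Bestvina--Brady local picture in the affine cell complex setting, i.e.\ the identification of the change in topology with attaching cones on descending links; this is exactly why the Morse function is required to be affine and non-constant on positive-dimensional cells, and with that in place the argument is bookkeeping on top of well-established machinery.
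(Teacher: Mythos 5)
The paper does not actually write out a proof of this theorem; it states it as a ``now quite standard'' combination of Bestvina--Brady Morse theory and Brown's Criterion and moves on. Your reconstruction is correct and is exactly the argument the paper is silently invoking: filter by sublevel sets, use \cite[Lemma~2.5 and Corollary~2.6]{bestvina97} to identify the change in topology across a level with coning off descending links, deduce essential $(n-1)$-connectivity of the filtration from the hypothesis on descending links, and then apply Brown's Criterion with the $\F_n$ stabilizer hypothesis.
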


Note that $SV_G$ acts by right multiplication on the set of corank-$1$ elements of $S\mathcal{V}_G$, which is compatible with left multiplication by $\mathcal{G}$, and so induces an action of $SV_G$ on $P_1$. This action is compatible with $\le$ and so extends to an action on $|P_1|$, under which $X$ is invariant.

\begin{lemma}\label{lem:vtx_stab}
The stabilizer in $SV_G$ of a vertex of $X$ with rank $m$ is isomorphic to~$G\wr \symgroup{m}$.
\end{lemma}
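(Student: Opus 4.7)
The plan is to compute the stabilizer directly from the definition of the action. A vertex of rank $m$ in $X$ is by definition an equivalence class $[h] = \mathcal{G}(m)\hspace{0.08em}h$, where $h\colon \C^S(1)\to\C^S(m)$ is a corank-$1$ element of $S\mathcal{V}_G$ and two such elements represent the same vertex iff they differ by left multiplication by an element of $\mathcal{G}(m)$. Since $SV_G$ acts on the right and $\mathcal{G}(m)$ acts on the left, the formula $[h]\cdot k = [hk]$ is a well-defined action of $SV_G$ on the vertices of rank $m$.

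First I would unwind what it means for $k\in SV_G$ to fix $[h]$: we need $hk\in \mathcal{G}(m)\hspace{0.08em}h$, equivalently $k\in h^{-1}\mathcal{G}(m)h$. So set-theoretically $\Stab_{SV_G}([h]) = h^{-1}\mathcal{G}(m)h$.

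Next I would check that this set really is a subgroup of $SV_G$. For any $g\in \mathcal{G}(m)$, the composition $h^{-1}gh$ makes sense in the groupoid $S\mathcal{V}_G$ (since the domain/codomain matching works: $h\colon\C^S(1)\to\C^S(m)$, $g\colon\C^S(m)\to\C^S(m)$, $h^{-1}\colon\C^S(m)\to\C^S(1)$), so it lies in $S\mathcal{V}_G$; and because its domain and range are both $\C^S(1)$, it lies in $SV_G$. Conversely, any $k\in h^{-1}\mathcal{G}(m)h$ arises this way.

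Finally, conjugation by $h$ provides a map $\mathcal{G}(m)\to h^{-1}\mathcal{G}(m)h$ sending $g\mapsto h^{-1}gh$; this is clearly a bijection onto the stabilizer, and by associativity of composition in the groupoid it is a homomorphism. Hence $\Stab_{SV_G}([h])\cong \mathcal{G}(m)$, and as already observed in Section~\ref{sec:groupoids} the group of twisted permutations $\mathcal{G}(m)$ is isomorphic to the wreath product $G\wr \symgroup{m}$.

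There is no real obstacle here; the argument is essentially a bookkeeping exercise in the groupoid $S\mathcal{V}_G$. The only point that requires a moment's care is the verification that every conjugate $h^{-1}gh$ actually belongs to the group $SV_G$ (and not merely to the enveloping groupoid), which follows from matching source and target.
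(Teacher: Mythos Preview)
Your proof is correct and follows essentially the same approach as the paper: both identify the stabilizer as $h^{-1}\mathcal{G}(m)h$ via the coset characterization and then invoke the isomorphism $\mathcal{G}(m)\cong G\wr\symgroup{m}$. You simply spell out a few more details (the source/target check and the isomorphism via conjugation) that the paper leaves implicit.
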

\begin{proof}
We have that $a\in SV_G$ stabilizes $[h]$ if and only if $hah^{-1} \in \mathcal{G}$ (recall that $\mathcal{G}$ is the set of twisted permutations). If $h$ has rank $m$ this is equivalent to $hah^{-1} \in \mathcal{G}(m)$, so we have $\Stab_{SV_G}([h]) = h^{-1}(\mathcal{G}(m))h$. Since $\mathcal{G}(m)\cong G\wr \symgroup{m}$, we conclude that $\Stab_{SV_G}([h]) \cong G\wr \symgroup{m}$.
\end{proof}

\begin{lemma}\label{lem:stab_spectrum}
Let $\gamma\in G$ and let $f$ be a multicolored tree. If $\gamma$ fixes\/ $\Spec(f)$ pointwise then\/ $[f\twist_\gamma]=[f]$, and if\/ $[f\twist_\gamma]=[f]$ then $\gamma$ stabilizes\/ $\Spec(f)$.
\end{lemma}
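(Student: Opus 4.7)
The plan is to translate both hypotheses into statements about the ordered dyadic partition attached to~$f$. Recall that a multicolored tree $f\colon\C^S\to\C^S(n)$ corresponds to an ordered dyadic partition $(\brick(\psi_1),\ldots,\brick(\psi_n))$ of $\C^S$, where each $\psi_i\colon S\to\{0,1\}^*$ has finite support. Writing $\operatorname{supp}(\psi_i)=\{s\in S\mid\psi_i(s)\ne\emptysequence\}$, the spectrum is then $\Spec(f)=\bigcup_{i=1}^n\operatorname{supp}(\psi_i)$.

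First I would compute, directly from the definition of~$\twist_\gamma$, that $\twist_\gamma^{-1}(\brick(\psi))=\brick(\psi\circ\gamma)$ and that the restriction of $\twist_\gamma$ to $\brick(\psi\circ\gamma)$ is precisely the twist homeomorphism associated with $\gamma$ from $\brick(\psi\circ\gamma)$ to $\brick(\psi)$. Using this, $f\twist_\gamma$ sends each $\brick(\psi_i\circ\gamma)$ onto $\C^S_i$ by a $\gamma$-twist, so Lemma~\ref{lem:swap} (applied to the tree~$f$) yields
\[
f\twist_\gamma \;=\; (\twist_\gamma\oplus\cdots\oplus\twist_\gamma)\,f',
\]
where $f'$ is the multicolored tree corresponding to the ordered partition $(\brick(\psi_1\circ\gamma),\ldots,\brick(\psi_n\circ\gamma))$. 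Since the prefactor lies in $\G(n)$, we have $[f\twist_\gamma]=[f']$, and the correspondence between forest cosets and unordered dyadic partitions from Proposition~\ref{prop:JoinForests} then reduces the statement to the equivalence
\[
[f\twist_\gamma]=[f]\iff\{\brick(\psi_1),\ldots,\brick(\psi_n)\}=\{\brick(\psi_1\circ\gamma),\ldots,\brick(\psi_n\circ\gamma)\}.
\]

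With this reduction in hand, the first implication becomes an easy check: if $\gamma$ fixes every point of $\Spec(f)$, then $\gamma$ also preserves $S\setminus\Spec(f)$ setwise, and a case split on whether $s$ lies in $\Spec(f)$ shows $\psi_i(\gamma s)=\psi_i(s)$ for every $s\in S$. Hence $\psi_i\circ\gamma=\psi_i$ for each~$i$ and the two partitions coincide even as ordered tuples. For the converse, I would assume equality of the unordered partitions, extract a permutation $\sigma\in\symgroup{n}$ satisfying $\psi_i\circ\gamma=\psi_{\sigma(i)}$, and pass to supports: this gives $\gamma^{-1}(\operatorname{supp}\psi_i)=\operatorname{supp}\psi_{\sigma(i)}$, so $\gamma^{-1}$ permutes the finite family of supports and therefore preserves their union $\Spec(f)$.

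The only step that really needs care is identifying the tree $f'$ in the swap $f\twist_\gamma=tf'$; once this translation of ``$\gamma$ acts on $\psi_i$'' as ``$\psi_i$ is replaced by $\psi_i\circ\gamma$'' is in place, both directions of the lemma reduce to routine bookkeeping with the function data~$\psi_i$.
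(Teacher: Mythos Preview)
Your argument is correct. It follows the same overall shape as the paper's proof---write $f\twist_\gamma$ as a twist times a multicolored tree $f'$, then compare spectra---but you carry this out more concretely through the defining functions~$\psi_i$, whereas the paper works more abstractly. For the first implication the paper argues by induction on the tree structure using the relation $x_s\twist_\gamma=(\twist_\gamma\oplus\twist_\gamma)x_s$ when $\gamma s=s$, while you read off $\psi_i\circ\gamma=\psi_i$ directly from the pointwise hypothesis. For the second implication the paper simply invokes Lemma~\ref{lem:swap} to get $f\twist_\gamma=gf'$, notes $\Spec(f)=\gamma\,\Spec(f')$, and concludes from $[f']=[f]$; your explicit identification of $f'$ as the tree for $(\brick(\psi_1\circ\gamma),\ldots,\brick(\psi_n\circ\gamma))$ recovers exactly this via $\gamma^{-1}(\operatorname{supp}\psi_i)=\operatorname{supp}\psi_{\sigma(i)}$. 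Your version has the advantage of making the action of $\gamma$ on the partition completely transparent and treating both directions uniformly; the paper's version is shorter and avoids unpacking the brick data.
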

\begin{proof}If $x_s$ is a simple split and $\gamma s=s$ then $x_s \twist_\gamma=(\twist_\gamma\oplus \twist_\gamma)x_s$ by Lemma~\ref{lem:Relations}(7).  It follows easily by induction that if $f$ is a multicolored tree and $\gamma$ fixes $\Spec(f)$ pointwise then $f\twist_\gamma = (\twist_\gamma\oplus \cdots \oplus \twist_\gamma)f$, and therefore $[f\twist_\gamma]=[f]$.

For the second statement, let $f$ be any multicolored tree and suppose that $[f\twist_\gamma]=[f]$.  By Lemma~\ref{lem:swap}, there exists a $G$-twist $g$ and a multicolored forest $f'$ so that $f\twist_\gamma = gf'$.  Note then that $\Spec(f) = \gamma\,\Spec(f')$.  But $[f']=[gf'] = [f\twist_\gamma]=[f]$ and hence $\Spec(f')=\Spec(f)$, so $\gamma$ must stabilize~$\Spec(f)$.
\end{proof}

For the following proposition, recall that a group $H_1$ is \newword{commensurate with} a group $H_2$ if there exist finite index subgroups of $H_1$ and $H_2$ that are isomorphic.

\begin{proposition}\label{prop:stabs}
Let\/ $\Delta$ be a simplex in\/ $|P_1|$ corresponding to a chain $v_0 < \cdots < v_k$, where $v_0=[h]$ and $v_k=[(f_1\oplus\cdots\oplus f_n)h]$ for some $h\in\SVVG$ and some multicolored trees $f_1,\ldots,f_n$.  Then the stabilizer of\/ $\Delta$ in $\SVG$ is commensurate with\/ $\prod_{i=1}^n \mathrm{Stab}_G(\Spec(f_i))$.
\end{proposition}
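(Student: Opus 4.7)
The plan is to conjugate by $h$ so that $v_0=[\id_m]$ (where $m$ is the rank of $v_0$) and $v_k=[f]$ with $f=f_1\oplus\cdots\oplus f_n$; note that $n=m$ since the domain of $f$ must match the codomain of $\id_m$. After this conjugation, Lemma~\ref{lem:vtx_stab} identifies $\Stab_{SV_G}(v_0)$ with $\G(m)\cong G\wr\symgroup{m}$, and since $SV_G$ preserves the partial order $\leq$ on $P_1$, stabilizing the ordered chain $\Delta$ is equivalent to fixing each $v_i$, so $\Stab(\Delta)=\bigcap_{i=0}^{k}\Stab(v_i)$ is a subgroup of $\G(m)$. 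Each intermediate $v_i$ is represented by a multicolored forest $f^{(i)}\colon\C^S(m)\to\C^S(\cdot)$ in the interval $[\id_m,f]$; after block-decomposing along the $m$ input cubes (absorbing any output permutation into a representative of the class), $f^{(i)}=f_1^{(i)}\oplus\cdots\oplus f_m^{(i)}$, and the factorization of $f_j$ through $f_j^{(i)}$ together with property~(iv) of the spectrum yields $\Spec(f_j^{(i)})\subseteq\Spec(f_j)$, with equality when $i=k$.

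Next I would pass to the index-$m!$ subgroup $G^m\leq\G(m)$ of pure $G$-twists. For $t=\twist_{\gamma_1}\oplus\cdots\oplus\twist_{\gamma_m}\in G^m$, the condition $f^{(i)}t(f^{(i)})^{-1}\in\G$ splits by Lemma~\ref{lem:Relations}(5) into the blockwise requirements $[f_j^{(i)}\twist_{\gamma_j}]=[f_j^{(i)}]$ for all $i,j$. The two directions of Lemma~\ref{lem:stab_spectrum} then give the sandwich
\[
\prod_{j=1}^m \Fix_G(\Spec(f_j)) \;\subseteq\; \Stab(\Delta)\cap G^m \;\subseteq\; \prod_{j=1}^m \Stab_G(\Spec(f_j)),
\]
where $\Fix_G$ denotes the pointwise fixer. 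The lower bound uses the containment $\Spec(f_j^{(i)})\subseteq\Spec(f_j)$ to force the condition at every $i$ simultaneously; the upper bound only needs the constraint at $v_k$, where $\Spec(f_j^{(k)})=\Spec(f_j)$.

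To conclude commensurability I would observe that each $\Spec(f_j)$ is finite, so $\Fix_G(\Spec(f_j))$ has finite index in $\Stab_G(\Spec(f_j))$ (the quotient embeds into $\Symm(\Spec(f_j))$). Hence $\Stab(\Delta)\cap G^m$ is a finite-index subgroup of $\prod_j\Stab_G(\Spec(f_j))$; meanwhile it has index at most $m!$ in $\Stab(\Delta)$. Therefore $\Stab(\Delta)\cap G^m$ is a common finite-index subgroup of $\Stab(\Delta)$ and $\prod_j\Stab_G(\Spec(f_j))$, proving commensurability.

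I expect the main technical obstacle to be the bookkeeping in the first paragraph: verifying that every intermediate vertex $v_i\in[v_0,v_k]$ admits a block-diagonal representative $f_1^{(i)}\oplus\cdots\oplus f_m^{(i)}$ with $\Spec(f_j^{(i)})\subseteq\Spec(f_j)$. This requires a careful translation between the partial order on $P$ and the refinement order on dyadic partitions of $\C^S(m)$, using that any dyadic partition refining the trivial partition of $\C^S(m)$ automatically respects the block structure, together with the behavior of spectrum under composition of multicolored forests established in Section~\ref{sec:groupoids}.
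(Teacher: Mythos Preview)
Your argument is correct and uses the same key ingredient as the paper (the sandwich coming from the two directions of Lemma~\ref{lem:stab_spectrum}), but the paper takes a shortcut that avoids exactly the bookkeeping you flag as the main obstacle. Rather than analyzing the constraint at every intermediate vertex $v_i$ via a block decomposition $f^{(i)}=f_1^{(i)}\oplus\cdots\oplus f_m^{(i)}$ and the containment $\Spec(f_j^{(i)})\subseteq\Spec(f_j)$, the paper simply observes that the interval $[v_0,v_k]$ is \emph{finite}, so $\Stab(\Delta)$ automatically has finite index in the stabilizer of the pair $\{v_0,v_k\}$. It then only needs the constraint at $v_k$, where the block decomposition is immediate and Lemma~\ref{lem:stab_spectrum} applies directly to each $f_j$. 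Your route works too and is arguably more explicit about what the stabilizer actually looks like; the paper's route trades that explicitness for not having to verify anything about the intermediate $v_i$ at all.
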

\begin{proof}Since the action of $\SVG$ is order-preserving, any element of $\SVG$ that stabilizes this simplex must fix $v_0$ and $v_k$.  Moreover, since the interval $[v_0,v_k]$ is finite, the stabilizer of $\Delta$ has finite index in the stabilizer of $[v_0,v_k]$.

Now, the stabilizer of $v_0$ in $\SVG$ is $h^{-1}\mathcal{G}(n)\hspace{0.0833em}h$, which is isomorphic to $G\wr \symgroup{n}$ by Lemma~\ref{lem:vtx_stab}.  This has a finite index subgroup isomorphic to $G^n$, consisting of all
$h^{-1}(\twist_{\gamma_1}\oplus\cdots\oplus \twist_{\gamma_n})h$ for $\gamma_1,\ldots,\gamma_n\in G$.  Such an element fixes $v_k$ if and only if $[f_i\twist_{\gamma_i}] = [f_i]$ for all~$i$.  By Lemma~\ref{lem:stab_spectrum}, the group of all such tuples $(\gamma_1,\ldots,\gamma_n)$ is commensurate with $\prod_{i=1}^n \mathrm{Stab}_G(\Spec(f_i))$.
\end{proof}

\begin{corollary}\label{cor:stabs}If $G$ is of type\/ $\F_n$ and the stabilizer in $G$ of any finite subset of $S$ is of type\/ $\F_n$, then the stabilizer of any simplex in\/ $|P_1|$ is of type\/~$\F_n$.
\end{corollary}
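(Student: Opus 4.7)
The plan is essentially to combine Proposition~\ref{prop:stabs} with two well-known closure properties of the class of groups of type~$\F_n$: it is closed under finite direct products, and it is an invariant of commensurability.

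First, given any simplex $\Delta$ in $|P_1|$ corresponding to a chain $v_0<\cdots<v_k$, we have $v_k=[f h]$ for some $h\in \SVVG$ and some multicolored forest $f$ (since $v_0\le v_k$). Decomposing $f=p_\sigma(f_1\oplus\cdots\oplus f_n)$ and absorbing the twisted permutation $p_\sigma$ into the coset representative puts $v_k$ into the form required by Proposition~\ref{prop:stabs}. Hence the stabilizer of $\Delta$ in $\SVG$ is commensurate with $\prod_{i=1}^n \Stab_G(\Spec(f_i))$.

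Next, each $\Spec(f_i)$ is by definition a \emph{finite} subset of $S$, so by hypothesis each factor $\Stab_G(\Spec(f_i))$ is of type~$\F_n$; when $\Spec(f_i)=\varnothing$ this factor is~$G$, which is itself of type~$\F_n$ by hypothesis. Since the class of groups of type~$\F_n$ is closed under finite direct products (e.g., take the product of classifying spaces with finite $n$-skeleta), the product $\prod_{i=1}^n \Stab_G(\Spec(f_i))$ is of type~$\F_n$.

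Finally, type~$\F_n$ is invariant under commensurability (it passes both to finite-index subgroups and to finite-index overgroups; see, e.g.,~\cite{brown87}), so the stabilizer of $\Delta$ in $\SVG$ is of type~$\F_n$ as claimed. There is no real obstacle here; the content of the corollary is entirely packed into Proposition~\ref{prop:stabs}, and the present step is just extracting the consequence for finiteness properties.
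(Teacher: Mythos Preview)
Your proof is correct and follows essentially the same approach as the paper's: invoke Proposition~\ref{prop:stabs}, then use that type~$\F_n$ is preserved under finite direct products and under commensurability. Your first paragraph spells out explicitly why every simplex of $|P_1|$ fits the hypotheses of Proposition~\ref{prop:stabs} (by absorbing the permutation $p_\sigma$ into the coset), which the paper leaves implicit, but this is a harmless elaboration rather than a different argument.
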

\begin{proof}
Finite products of groups of type $\F_n$ are of type $\F_n$, and any group commensurate to a group of type $\F_n$ is of type $\F_n$, so this is immediate from Proposition~\ref{prop:stabs}.
\end{proof}

Let $\phi \colon S\mathcal{V}_G \to \N$ be the function ``rank'' that sends $h\colon \C^S(m') \to \C^S(m)$ to $m$. Denote also by $\phi$ the restriction of $\phi$ to $P_1=X^{(0)}$, and the extension of this to a map $X\to\R$ given by affinely extending to simplices. Note that $\phi$ is $SV_G$-invariant. It is a Morse function since it is non-constant on edges, by virtue of a non-trivial multicolored forest having different rank and corank.

For each $m\in\N$ consider the sublevel set $X_m \defeq \phi^{-1}([1,m])$. Since $\phi$ is invariant under the action of $SV_G$, each $X_m$ is $SV_G$-invariant.

\begin{proposition}[Cocompact]\label{prop:cocpt}
If the action of $G$ on $S$ is oligomorphic then each $X_m$ is $SV_G$-cocompact.
\end{proposition}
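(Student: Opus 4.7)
The plan is to show that $SV_G$ acts with only finitely many orbits on the simplices of $X_m$. Since any simplex of $X_m$ is a chain $v_0 < v_1 < \cdots < v_k$ of vertices of rank at most $m$, it suffices to bound the number of orbits of such chains.

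First I would observe that $SV_G$ acts transitively on the set of rank-$r$ vertices of $P_1$ for each fixed $r$: given two such vertices $[h_1]$ and $[h_2]$ with $h_i\colon \C^S\to \C^S(r)$, the element $a = h_1^{-1}h_2$ lies in $SV_G$ and sends $[h_1]$ to $[h_2]$. So after translating by $SV_G$ we may assume $v_0=\omega_r$ for a fixed representative $\omega_r = [f_0^{(r)}]$, where $f_0^{(r)}$ is a chosen multicolored tree of rank $r\le m$. By Lemma~\ref{lem:vtx_stab} the stabilizer of $\omega_r$ is isomorphic to $G\wr \symgroup{r}$, and it suffices to show this stabilizer acts with finitely many orbits on chains $\omega_r < v_1 < \cdots < v_k$ with $v_k$ of rank at most $m$.

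Next I would parametrize the upper bounds of $\omega_r$. Any such upper bound has the form $v_k = [(t_1\oplus\cdots\oplus t_r)\,f_0^{(r)}]$ for multicolored trees $t_1,\ldots,t_r$, and up to $\mathcal{G}$-equivalence this tuple is determined by $v_k$. By relation (6) of Lemma~\ref{lem:Relations} the $\symgroup{r}$ factor of the stabilizer acts by permuting the entries of the tuple $(t_1,\ldots,t_r)$, while by Lemmas~\ref{lem:swap} and~\ref{lem:stab_spectrum} the $i$-th factor of $G^r$ acts on $t_i$ in such a way that $\Spec(t_i)$ is sent to $\gamma\cdot \Spec(t_i)$. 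The boundedness of rank forces $n_1+\cdots+n_r\le m$, so each $\Spec(t_i)$ is a finite subset of $S$ of size at most $m-1$. Oligomorphicity of $G$ on $S$ then implies $G$ has finitely many orbits on such subsets, and once a spectrum is fixed, there are only finitely many multicolored trees of rank at most $m$ with that spectrum (the underlying combinatorial shape and the labels of its internal nodes are finite data). So $G\wr \symgroup{r}$ acts with finitely many orbits on tuples $(t_1,\ldots,t_r)$ of total rank at most $m$, giving finitely many orbits of upper bounds $v_k$.

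Finally, once $v_0=\omega_r$ and $v_k$ are fixed, the interval $[v_0,v_k]$ is finite by the lattice description in Proposition~\ref{prop:JoinForests} combined with the rank bound, so there are only finitely many choices for the intermediate chain $v_1<\cdots<v_{k-1}$. Summing over $r\in\{1,\ldots,m\}$ and the finitely many stabilizer-orbits of pairs $(v_0,v_k)$, we obtain finitely many $SV_G$-orbits of simplices in $X_m$, as desired. The main obstacle I anticipate is the middle step: translating the given oligomorphicity of the action of $G$ on $S$ into finite orbits of the wreath-product stabilizer $G\wr\symgroup{r}$ acting on $r$-tuples of multicolored trees modulo $\mathcal{G}$-equivalence. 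This hinges on using Lemma~\ref{lem:swap} to move $G$-twists across forests and Lemma~\ref{lem:stab_spectrum} to identify the induced $G$-action with the permutation action on spectra.
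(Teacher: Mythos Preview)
Your proposal is correct and follows essentially the same strategy as the paper's proof: both use transitivity of $SV_G$ on vertices of fixed rank, reduce to counting orbits of the stabilizer $\mathcal{G}(r)\cong G\wr\symgroup{r}$ on chains above a fixed rank-$r$ vertex, invoke the swap lemma to see that the $G$-factors act on spectra, and then use oligomorphicity plus the finiteness of multicolored forests with bounded spectrum and bounded rank. The only packaging difference is that the paper phrases the oligomorphicity step as ``every orbit contains a chain whose forests have spectrum inside a single fixed finite set $T_i$'', whereas you count $G$-orbits of spectra directly and then note that $[v_0,v_k]$ is finite; these are equivalent formulations of the same argument, and your version is arguably more explicit. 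One small citation fix: relation~(6) of Lemma~\ref{lem:Relations} is stated for $h_i\in SV_G$, so to justify that the $\symgroup{r}$-factor permutes the trees $t_i$ you should appeal to the general swap statement (Lemma~\ref{lem:swap}) or to the direct-sum relation~(5) rather than to~(6) alone.
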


\begin{proof}
First note that for any $m\in\N$, $SV_G$ acts transitively on the set of vertices $v$ with $\phi(v)=m$. This is because if $\phi([h])=\phi([h'])$ then the product $h^{-1}h'$ makes sense, lies in $SV_G$, and takes $[h]$ to $[h']$. In particular there are finitely many $SV_G$-orbits of vertices in $X_m$. It now suffices to show that for a vertex $v\in X_m$, say with $\phi(v)=r\le m$, there are finitely many $\Stab_{SV_G}(v)$-orbits of simplices in $X_m$ containing $v$ as their vertex of minimum rank. Without loss of generality $v=[\id_r]$, so we want to show that there are finitely many $\mathcal{G}(r)$-orbits of simplices in $X_m$ containing $[\id_r]$ as their vertex of minimum rank. For each $i\in\N$ choose a subset $T_i\subseteq S$ with $|T_i|=i$. Since $G$ is oligomorphic, the proof of Lemma~\ref{lem:swap} implies that there exists $i\in\N$ such that every such $\mathcal{G}(r)$-orbit contains a simplex of the form $[\id_r]<[f_1]<\cdots<[f_k]$ for multicolored forests $f_1,\dots,f_k$ such that $\Spec(f_1),\dots,\Spec(f_k)\subseteq T_i$. The result now follows from the fact that for each $i,m\in\N$ there are only finitely many multicolored forests with spectrum lying in $T_i$ and with rank at most $m$.
\end{proof}

At this point if $G$ is oligomorphic and if every stabilizer in $G$ of a finite subset of $S$ is of type $\F_n$, then the twisted Brin--Thompson group $SV_G$ acts cellularly on the contractible complex $X$, with stabilizers of type $\F_n$, and we have an $SV_G$-cocompact filtration $(X_m)_{m\in\N}$ for the Morse function $\phi$. In order to show that $SV_G$ is of type $\F_n$ it now suffices by Theorem~\ref{thrm:morse_brown} to prove that as $\phi([f])$ goes to infinity, the descending link $\dlk [f]$ of $[f]$ in $X$ becomes arbitrarily highly connected. This is the content of the next section.

\section{Descending links}\label{sec:dlks}

For $[h]\in P_1$ the descending link $\dlk [h]$ of $[h]$ in $X$ is the subcomplex of $X_{\phi([h])}$ supported on vertices of the form $[f^{-1}gh]$ for $g$ a twisted permutation and $f$ a non-trivial elementary multicolored forest. If $\phi([h])=m$ then $\dlk [h] \cong \dlk [\id_m]$. Let us denote this complex by $E_m$. A vertex of $E_m$ is an element of $P$ of the form $v=[f^{-1} g]$ for $f$ a non-trivial elementary multicolored forest and $g\in \mathcal{G}(m)$.

\subsection*{Nerve lemma}
In the course of analyzing $E_m$ we will need to use the Nerve Lemma, which we record here for reference. Recall that the \emph{nerve} of a covering of a complex $Z$ by subcomplexes $Z_1,\dots,Z_m$ is the simplicial complex with a vertex for each $Z_i$ and such that $Z_{i_1},\dots,Z_{i_k}$ span a simplex whenever $Z_{i_1}\cap\dots\cap Z_{i_k} \ne\emptyset$.

\begin{lemma}[Nerve Lemma]\cite[Lemma~1.2]{bjoerner94}\label{lem:nerve}
Let $Z$ be a simplicial complex covered by subcomplexes $Z_1,\dots,Z_m$. Suppose for each $1\le r\le m$ that every non-empty intersection of $r$ of these subcomplexes is $(n-r)$-connected. Then $Z$ is $(n-1)$-connected if and only if the nerve of the covering is $(n-1)$-connected.
\end{lemma}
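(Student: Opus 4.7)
My plan is to prove this by the classical Mayer--Vietoris blowup construction.  Let $N$ denote the nerve and consider the subspace $\Delta Z \subseteq Z\times |N|$ consisting of pairs $(z,t)$ such that the carrier of $t$ (as a point in the geometric realization of $N$) is contained in $\{i : z\in Z_i\}$.  This space carries two natural projections, $\pi_Z\colon \Delta Z \to Z$ and $\pi_N\colon \Delta Z\to |N|$, and the strategy is to show that both are $(n-1)$-equivalences under the stated hypotheses; combining these gives that $Z$ and $N$ have the same $(n-1)$-type, which certainly implies the biconditional in the statement.

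The map $\pi_Z$ will turn out to be a homotopy equivalence with no connectivity hypotheses on the cover whatsoever: the preimage of a point $z\in Z$ is the full simplex of $N$ on the non-empty vertex set $\{i : z\in Z_i\}$, hence contractible.  One can make this precise either by constructing an explicit homotopy inverse using a partition of unity subordinate to the cover, or by appealing to a Quillen Theorem~A style gluing argument over the simplices of $N$.

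For $\pi_N$, I would filter $\Delta Z$ by the preimages $F_k \defeq \pi_N^{-1}(|N^{(k)}|)$ of the skeleta of $N$ and induct on $k$.  At each stage, attaching a $k$-simplex $\sigma$ of $N$ contributes a copy of $|\sigma|\times Z_\sigma$ glued to $F_{k-1}$ along $\partial|\sigma|\times Z_\sigma$, where $Z_\sigma = \bigcap_{i\in\sigma} Z_i$.  The hypothesis gives that $Z_\sigma$ is $(n-k-1)$-connected, and this exponent is calibrated precisely so that a K\"unneth / Blakers--Massey computation shows the relative pair is $(n-1)$-connected, hence attaching it preserves $(n-1)$-connectivity; iterating through the filtration yields that $\pi_N$ is an $(n-1)$-equivalence.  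The main obstacle is checking this numerology carefully at each stage: the exponent $(n-r)$ for $r$-fold intersections is exactly sharp to make the induction close up, and one must manage the interaction between the cellular filtration of $N$ and the varying spaces $Z_\sigma$.  Bj\"orner's original proof in \cite{bjoerner94} bypasses some of this bookkeeping by using a spectral sequence on the double complex associated to the cover, which is perhaps the most conceptually clean path; the essential input, in either formulation, is the homotopy colimit decomposition above together with the tightness of the connectivity bounds.
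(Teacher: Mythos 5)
The paper does not prove this statement; it is invoked as a black box, cited from Bj\"orner--Lov\'asz--Vre\'cica--\v{Z}ivaljevi\'c (\cite[Lemma~1.2]{bjoerner94}). So there is no ``paper's own proof'' to compare against, and your proposal should be judged on its own terms.

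Your route through the Mayer--Vietoris blowup complex $\Delta Z$ (equivalently, the homotopy colimit of the diagram $\sigma\mapsto Z_\sigma$ over the poset of nonempty simplices of $N$) is a well-trodden and legitimate way to prove the homotopy-theoretic Nerve Lemma, and the overall architecture is sound. Two points deserve attention. First, for $\pi_Z$, ``fibers are contractible, hence homotopy equivalence'' is not a valid inference on its own; you need the actual theorem that for a CW complex covered by subcomplexes the natural map $\mathrm{hocolim}_\sigma Z_\sigma\to Z$ is a homotopy equivalence (this is standard, but it is a theorem, not a consequence of fiber contractibility alone, and you should cite or prove it). Second, and more substantively, there is an off-by-one in your stated target for $\pi_N$: to deduce the biconditional ``$Z$ is $(n-1)$-connected iff $|N|$ is $(n-1)$-connected'' you need $\pi_N$ to be an $n$-equivalence, not merely an $(n-1)$-equivalence --- a map that is only an $(n-1)$-equivalence gives you one implication (isomorphisms through $\pi_{n-2}$ and a surjection on $\pi_{n-1}$ lets you pass connectivity from $\Delta Z$ to $|N|$) but not the converse, since the surjection on $\pi_{n-1}$ does not prevent $\pi_{n-1}(\Delta Z)$ from being nonzero while $\pi_{n-1}(|N|)$ vanishes. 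Fortunately the hypothesis $Z_\sigma$ $(n-|\sigma|)$-connected is calibrated precisely so that the skeletal induction on $F_k=\pi_N^{-1}(|N^{(k)}|)$ yields an $n$-equivalence, so the fix is simply to run your argument with the correct target connectivity. With those two adjustments your sketch is a correct proof of the cited lemma, just a different one from whatever argument \cite{bjoerner94} itself uses (which, as you note, goes through a Mayer--Vietoris spectral sequence at the level of homology and a Hurewicz argument).
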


\subsection*{The subcomplex \texorpdfstring{$\boldsymbol{VE_m}$}{VE\_m}}

We say that a multicolored forest $f$ is \newword{very elementary} if it can be written as a direct sum
\[
f = f_1\oplus \cdots \oplus f_m
\]
where each $f_i$ is either the identity homeomorphism on $\C^S$ or a simple split. Clearly any very elementary multicolored forest is elementary.

Let $VE_m$ be the subcomplex of $E_m$ spanned by vertices of the form $v=[f^{-1}g]$ for $f$ a non-trivial very elementary multicolored forest, and for each $k\in\N$ let
\[
\nu(k) = \left\lfloor\frac{k-2}{3}\right\rfloor.
\]

\begin{lemma}\label{lem:VE_conn}
The complex $VE_m$ is $(\nu(m)-1)$-connected.
\end{lemma}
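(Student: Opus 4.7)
My plan is to identify $VE_m$ via the Nerve Lemma with a labeled variant of the matching complex of the complete graph $K_m$, and then transfer the classical connectivity bound.

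First I would give an explicit combinatorial description of $VE_m$. A non-trivial very elementary forest $f = f_1 \oplus \cdots \oplus f_m$ is encoded by the non-empty subset $I \subseteq \{1,\ldots,m\}$ of indices where $f_i$ is a simple split, together with a choice of direction $s_i \in S$ for each $i \in I$. Using Lemma~\ref{lem:Relations}(7) to commute $\twist_\gamma$'s past simple splits, and Lemma~\ref{lem:swap} to commute twisted permutations past forests, the twisted permutation $g\in\G(m)$ in $[f^{-1}g]$ can be absorbed up to the $\G(r)$-quotient defining $P$. The resulting description of a vertex of $VE_m$ is a non-empty partial matching $M$ on $\{1,\ldots,m\}$ (recording which summands of $\C^S(m)$ get paired back together by $f^{-1}$) together with a labeling $\ell\colon M\to S$, taken modulo a natural action of $G\wr\symgroup{m}$. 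A $k$-simplex of $VE_m$ is correspondingly a chain of labeled matchings $(M_0,\ell_0) > (M_1,\ell_1) > \cdots > (M_k,\ell_k)$ under reverse inclusion with compatible labels.

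Second, I would cover $VE_m$ by the closed stars $U_{(p,s)} = \{(M,\ell) \in VE_m : p \in M \text{ and } \ell(p) = s\}$ indexed by pairs $p$ of indices and elements $s\in S$. Each $U_{(p,s)}$ is a cone with apex the singleton labeled matching $(\{p\},\,p\mapsto s)$, and any non-empty intersection of finitely many such stars is itself a cone (non-empty precisely when the labeled pairs form a consistent labeled matching). The Nerve Lemma~\ref{lem:nerve} then gives a homotopy equivalence $VE_m \simeq \tilde{\mathcal{M}}_m$, where $\tilde{\mathcal{M}}_m$ is the \emph{labeled matching complex} on $\{1,\ldots,m\}$ whose vertices are labeled edges $(p,s)$ of $K_m$ and whose simplices are consistent labeled matchings.

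Third, I would bound the connectivity of $\tilde{\mathcal{M}}_m$ by induction on $m$, mimicking the argument of Bj\"orner--Lov\'asz--Vre\'cica--\v{Z}ivaljevi\'c for the ordinary matching complex. The key observation is that the link in $\tilde{\mathcal{M}}_m$ of any vertex $(p,s)$ with $p=\{i,j\}$ is isomorphic to $\tilde{\mathcal{M}}_{m-2}$ on the remaining indices $\{1,\ldots,m\}\setminus\{i,j\}$ (with the same label set $S$), so the classical induction carries over essentially verbatim and yields that $\tilde{\mathcal{M}}_m$ is at least $(\lfloor(m-2)/3\rfloor-1)=(\nu(m)-1)$-connected.

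The main obstacle I foresee is the precise bookkeeping in Step~1, particularly the interaction of the double quotient inherent in the definition of $P$ (both the $\G(r)$ on the output side and the varying $g\in\G(m)$ on the input side) with the induced $G\wr\symgroup{m}$-action on labeled matchings, and the need to check that the Nerve Lemma cover above descends cleanly through this action. Once the combinatorial identification is firmly established, the Nerve Lemma reduction and the BLVZ-style induction are essentially standard.
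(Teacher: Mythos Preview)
Your high-level strategy (cover $VE_m$ by stars, apply the Nerve Lemma, and reduce to a matching-complex computation) is exactly the paper's approach.  However, Step~1 contains a genuine error which propagates through the rest of the argument.

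The claim that the twisted permutation $g$ ``can be absorbed up to the $\G(r)$-quotient'' is false as soon as $G$ is non-trivial.  Concretely, take $m=2$ and $\gamma\ne 1$.  Using Lemma~\ref{lem:Relations}(7) one computes
\[
\twist_\delta\, x_s^{-1}(\twist_\gamma\oplus\id_1) \;=\; x_{\delta s}^{-1}(\twist_{\delta\gamma}\oplus\twist_\delta),
\]
so the $\G(1)$-orbit of $x_s^{-1}(\twist_\gamma\oplus\id_1)$ never contains an element of the form $x_{s'}^{-1}(\twist_{\delta'}\oplus\twist_{\delta'})$; in particular $[x_s^{-1}(\twist_\gamma\oplus\id_1)]\ne[x_{s'}^{-1}]$ for every $s'\in S$.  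Thus a vertex of $VE_m$ remembers not only a labeled matching but also genuine ``off-diagonal'' twist data coming from~$g$, and there is no $G\wr\symgroup{m}$-quotient to take: different $g$'s produce different vertices.  (You flagged exactly this bookkeeping as your main worry, and indeed it fails.)  Consequently, for a fixed pair $p$ and direction $s$ there are in general many rank-$(m-1)$ vertices of $VE_m$ with that data, so your $U_{(p,s)}$ is not a cone and the family $\{U_{(p,s)}\}$ neither covers $VE_m$ nor has the claimed nerve.

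The paper sidesteps this by using a much coarser cover, indexed only by unordered pairs $\{i,j\}$ with no label.  Each piece $VE_m^{i,j}$ is then itself analyzed via a union of stars whose higher intersections retract onto a copy of $VE_{m-2}$, so that an induction on $m$ absorbs all of the label and twist data simultaneously.  The nerve of the outer cover is then literally the matching complex of $K_m$, and the Bj\"orner--Lov\'asz--Vre\'cica--\v{Z}ivaljevi\'c bound finishes the proof.  Your proposed Step~3 induction is essentially this same idea, but it has to be carried out on $VE_m$ itself rather than on a putative labeled matching complex.
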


\begin{proof}
We induct on $m$ (the base case that $VE_m$ is non-empty for $m\ge 2$ is trivially satisfied). Write $\C^S(m)=\C^S_1\sqcup\cdots\sqcup \C^S_m$. For $1\le i<j\le m$ say that a vertex $[f^{-1} g]$ of $VE_m$ \emph{matches} $i$ and $j$ if $f^{-1} g$ sends $\C^S_i$ and $\C^S_j$ into the same copy of $\C^S$ in its range. Note that this is independent of the choice of representative, since postcomposition by a twisted permutation just maps the copies of $\C^S$ in the range of $f^{-1} g$ homeomorphically to each other. For each $1\le i<j\le m$ let $VE_m^{i,j}$ be the subcomplex of $VE_m$ spanned by vertices $v$ that either match $i$ and $j$, or else do not match $i$ or $j$ to any $k$. By construction $VE_m$ is covered by the subcomplexes $VE_m^{i,j}$.

We will apply the Nerve Lemma~\ref{lem:nerve} to the covering of $VE_m$ by the $VE_m^{i,j}$. Note that $VE_m^{i,j}$ is the union of the (contractible) stars of its rank-$(m-1)$ vertices that match $i$ and $j$ (with one for each element of $S$). The intersection of two or more of these stars retracts onto the subcomplex spanned by those $v$ that do not match $i$ or $j$ to any $k$. This is isomorphic to $VE_{m-2}$, which is $(\nu(m)-2)$-connected by induction. This shows that each $VE_m^{i,j}$ is $(\nu(m)-1)$-connected. It is straightforward to see that $VE_m^{i_1,j_1},\dots,VE_m^{i_r,j_r}$ have non-empty intersection if and only if the sets $\{i_1,j_1\},\dots,\{i_r,j_r\}$ are pairwise disjoint. Any non-empty such $VE_m^{i_1,j_1}\cap\dots\cap VE_m^{i_r,j_r}$ is therefore the union of (contractible) stars of its rank-$(m-r)$ vertices that match $i_\ell$ to $j_\ell$ for each $1\le \ell\le r$ (with one for each element of $S^r$). The intersection of two or more of these stars retracts onto the subcomplex spanned by those $v$ that do not match any $i_\ell$ or $j_\ell$ to any $k$. This is isomorphic to $VE_{m-2r}$, which is $(\nu(m)-r-1)$-connected by induction. This shows that $VE_m^{i_1,j_1}\cap\dots\cap VE_m^{i_r,j_r}$ is $(\nu(m)-r)$-connected. In particular the hypotheses of the Nerve Lemma are all satisfied.

The nerve of this covering consists of a vertex for each $VE_m^{i,j}$ and a simplex spanned by any collection of these with non-empty intersection. As we said, $VE_m^{i_1,j_1}\cap\dots\cap VE_m^{i_r,j_r}$ is non-empty if and only if the sets $\{i_1,j_1\},\dots,\{i_r,j_r\}$ are pairwise disjoint. Hence the nerve is precisely the matching complex of the complete graph $K_m$, which is $(\nu(m)-1)$-connected (\cite[Theorem~4.1]{bjoerner94}).
\end{proof}

\subsection*{The Morse function \texorpdfstring{$\boldsymbol{\mu}$}{mu}}

Now we want to build up from $VE_m$ to $E_m$ and get high connectivity for $E_m$. We will use a Morse function similar to the one used in \cite{fluch13} for (what here would be called) the $|S|<\infty$ and $G=\{1\}$ case. Let $f$ be an elementary multicolored forest with rank $m$ and corank $q$. Write the domain of $f$ as
\[
\C^S(q)=R_1\sqcup\cdots\sqcup R_q
\]
and the range as
\[
\C^S(m)=L_1\sqcup\cdots\sqcup L_m \text{,}
\]
with each $R_i$ and $L_i$ a copy of $\C^S$, and call the $R_i$ \emph{roots} and the $L_i$ \emph{leaves}. The image of each root under $f$ is a union of some number of leaves. Define the \emph{weight} of a root to be the number of leaves in its image under $f$. For each $k\in \N$ define the \emph{$k$th weight multiplicity} $\mu_k(f)$ of $f$ to be the number of roots with weight $k$. Note that the $\mu_k$ are invariant under left and right multiplication by permutations and twists, so it makes sense to define the $k$th weight multiplicity of a vertex $v=[f^{-1} g]\in E_m$ to be
\[
\mu_k(v)\defeq \mu_k(f) \text{.}
\]
Let $\mu\colon E_m^{(0)} \to \N_0^m$ (where $\N_0\defeq \N\cup\{0\}$) be the function
\[
\mu\defeq (\mu_m,\dots,\mu_3,\phi) \text{.}
\]
Note that $\mu_2$ and $\mu_1$ are not included in this tuple. Consider the lexicographic ordering on~$\N_0^m$, and choose any order-preserving embedding $\N_0^m \to \R$, to view $\mu$ as a map from $E_m^{(0)}$ to $\R$. Extend this affinely to the simplices of $E_m$ to get a map $\mu\colon E_m \to \R$.

\begin{observation}
The map $\mu\colon E_m \to \R$ is a Morse function.
\end{observation}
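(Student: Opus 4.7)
The statement requires three things: $\mu$ is affine on simplices, non-constant on positive dimensional cells, and has closed discrete image on $E_m^{(0)}$. The first holds by construction, since $\mu$ is defined on vertices and then affinely extended over each simplex via the chosen order-preserving embedding $\N_0^m \hookrightarrow \R$, so nothing needs to be checked. The plan is therefore to verify the remaining two conditions.

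For non-constancy on positive dimensional cells, it suffices to check edges, since an affine function on a simplex is constant iff it is constant on every edge of that simplex. Let $\{v,w\}$ be an edge of $E_m$. As an edge of the descending link of $[\id_m]$, this means $\{[\id_m],v,w\}$ is a $2$-simplex in $X$; in particular $v$ and $w$ are comparable in $P_1$, so WLOG $v<w$. Unwinding the definition of the partial order: if $v=[h]$ and $w=[f h]$ for a multicolored forest $f$ whose rank exceeds its corank, then the rank of $fh$ strictly exceeds the rank of $h$, and equality of ranks would force $f$ to be a twisted permutation and hence $v=w$. Therefore $\phi(v)<\phi(w)$. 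Since $\phi$ appears as a coordinate of the tuple $\mu=(\mu_m,\dots,\mu_3,\phi)$ and the embedding $\N_0^m\hookrightarrow\R$ is order-preserving (in particular injective), we conclude $\mu(v)\ne\mu(w)$.

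For the closed, discrete image condition, I will show the image of $\mu$ on $E_m^{(0)}$ is in fact \emph{finite}. Any vertex $v=[f^{-1}g]\in E_m$ is represented by a non-trivial elementary multicolored forest $f$ of rank $m$ and corank $q=\phi(v)$. Since $f$ is non-trivial, $q\in\{1,\dots,m-1\}$, bounding the last coordinate of $\mu(v)$. For the remaining coordinates, the image of each root of $f$ decomposes as a disjoint union of leaves, and every leaf of $f$ belongs to the image of exactly one root, so the weights of the $q$ roots are positive integers summing to $m$. Hence $\sum_{k\ge 1}k\hspace{0.08333em}\mu_k(v)=m$, which forces $\mu_k(v)=0$ for $k>m$ and $\mu_k(v)\le \lfloor m/k\rfloor$ for $3\le k\le m$. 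Each coordinate of $\mu(v)$ takes values in a finite set, so $\mu(E_m^{(0)})$ is a finite subset of $\N_0^m$; its image in $\R$ under the chosen embedding is therefore a finite, hence closed and discrete, subset. This completes the verification that $\mu$ is a Morse function; no step is a genuine obstacle, as the key point is simply that the sum of root weights is the fixed integer $m$.
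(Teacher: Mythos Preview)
Your proof is correct and follows essentially the same approach as the paper: both note that $\mu$ is affine by construction, that $\phi$ (and hence $\mu$) is non-constant on edges, and that the image is finite because each coordinate is bounded. Your bound $\mu_k(v)\le\lfloor m/k\rfloor$ via $\sum_k k\mu_k(v)=m$ is sharper than the paper's $\mu_k\le m$, but either suffices.
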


\begin{proof}
Since $\mu_k \le m$ for all $k$ and $\phi$ restricted to $E_m$ is bounded by $m-1$, the image of $E_m^{(0)}$ under $\mu$ is finite, hence closed and discrete. By construction, $\mu$ restricts to an affine function on each simplex. On any edge of $E_m$, $\phi$ is non-constant, hence $\mu$ is non-constant.
\end{proof}

As a remark, the Morse function $\mu$ is defined slightly differently than the corresponding Morse function in \cite{fluch13}, though it dictates the same behavior for descending links.

\subsection*{The \texorpdfstring{$\boldsymbol{\mu}$}{mu}-descending link}
Note that $VE_m$ is precisely the sublevel set $E_m^{\mu\le (0,\dots,0,m-1)}$ since a vertex $v\in E_m$ lies in $VE_m$ if and only if $\mu_k(v)=0$ for all $k\ge3$ (this is exactly why we did not use $\mu_2$ or $\mu_1$ in the definition of $\mu$). In particular, to inspect the higher connectivity of $E_m$ it suffices to analyze $\mu$-descending links $\dlk_\mu v$ for vertices $v\in E_m$ satisfying $\mu(v)>(0,\dots,0,m-1)$, i.e., satisfying $\mu_k(v)>0$ for some $k\ge 3$. Every vertex of $\dlk_\mu v$ is either an upper bound or a lower bound of $v$ (as elements of $P$), and any upper bound of $v$ is an upper bound of any lower bound of $v$, so $\dlk_\mu v$ decomposes as the join of two subcomplexes, the \emph{$\mu$-descending split link} and \emph{$\mu$-descending merge link}.

\begin{lemma}[Descending merge link]\label{lem:desc_merge_link}
The $\mu$-descending merge link of $v$ is isomorphic to $VE_{\mu_1(v)}$, hence is $(\nu(\mu_1(v))-1)$-connected.
\end{lemma}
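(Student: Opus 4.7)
The plan is to exhibit an explicit simplicial isomorphism between the $\mu$-descending merge link of $v$ and $VE_{\mu_1(v)}$, after which the connectivity statement is immediate from Lemma~\ref{lem:VE_conn}.

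Let $v=[f^{-1}g]$ with $f\colon \C^S(q)\to\C^S(m)$ a non-trivial elementary multicolored forest and $g\in\mathcal{G}(m)$. Using left multiplication by twisted permutations of $\C^S(q)$, I would reorder the roots so that $R_1,\dots,R_{\mu_1(v)}$ are exactly the weight-$1$ roots and write $f=\id_1^{\oplus \mu_1(v)}\oplus f''$, where $f''$ is an elementary forest all of whose roots have weight at least $2$. First I would characterize the $\mu$-descending lower bounds of $v$ by a weight-change calculation: fusing roots of weights $w_1,\dots,w_j$ into a single root of weight $k=w_1+\cdots+w_j$ decreases $\mu_{w_i}$ for each $i$ and increases $\mu_k$ by one, and the change in $\phi$ is $-(j-1)$. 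For $\mu=(\mu_m,\dots,\mu_3,\phi)$ to decrease lexicographically, no $\mu_k$ with $k\ge 3$ may increase, forcing $k\le 2$, which (together with $j\ge 2$ and each $w_i\ge 1$) forces $j=2$ and $w_1=w_2=1$. Thus every $\mu$-descending merge of $v$ is obtained by choosing a nonempty set of disjoint pairwise fusions of weight-$1$ roots and, for each pair, a brick-structure on the resulting weight-$2$ root, i.e.\ a simple split in some direction.

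Next I would define the isomorphism. Identify $R_1,\dots,R_{\mu_1(v)}$ with the cubes of $\C^S(\mu_1(v))$. A $\mu$-descending merge $w<v$ then corresponds to a very elementary multicolored forest $f_{\mathrm{ve}}\colon \C^S(q')\to \C^S(\mu_1(v))$ on this smaller cube: each fused pair is a simple split with the chosen direction, and each unfused weight-$1$ root is an identity summand. Concretely $w$ has representative $(f_{\mathrm{ve}}\oplus \id)^{-1}(\id\oplus f'')^{-1} g\sim (f_{\mathrm{ve}}^{-1}g_{\mathrm{ve}})\oplus (f''^{-1}g'')$, so the assignment $w\mapsto [f_{\mathrm{ve}}^{-1}g_{\mathrm{ve}}]\in VE_{\mu_1(v)}$ is well-defined. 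A chain $w_1<\cdots<w_r<v$ in the merge link corresponds to a nested sequence of pairing sets $P_1\supsetneq\cdots\supsetneq P_r$ on the weight-$1$ roots of $v$, which is exactly the same combinatorial data as a chain in $VE_{\mu_1(v)}$, so the vertex bijection is simplicial. The main obstacle is checking well-definedness modulo the equivalences on both sides: the left $\mathcal{G}(q)$- and $\mathcal{G}(\mu_1(v))$-actions act in parallel on the weight-$1$ block (freely permuting and twisting the weight-$1$ roots), and Lemma~\ref{lem:Relations}(7) shows that the direction data of each simple split lies in the same orbit on both sides, so the two equivalence relations agree under the identification.

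Finally, applying Lemma~\ref{lem:VE_conn} with $m=\mu_1(v)$ shows that $VE_{\mu_1(v)}$ is $(\nu(\mu_1(v))-1)$-connected, and hence so is the $\mu$-descending merge link of $v$.
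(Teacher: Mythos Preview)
Your approach is correct and essentially the same as the paper's: both normalize the representative of $v$ so that the weight-$1$ part is isolated, characterize the $\mu$-descending lower bounds as those obtained by pairing off weight-$1$ roots via simple splits, and then read off the very elementary forest on that block to get the isomorphism with $VE_{\mu_1(v)}$. The paper phrases the normalization from the leaf side (reordering the $L_i$ via $\mathcal{G}(m)$ and using the restriction map $\res_\alpha$), while you phrase it from the root side (reordering the $R_i$ and writing $f=\id^{\oplus\mu_1(v)}\oplus f''$); these are dual descriptions of the same map.

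One point to tighten: your weight-change calculation is stated for a \emph{single} fusion, and the claim ``no $\mu_k$ with $k\ge 3$ may increase'' is not automatic for a general merge $w<v$ consisting of several simultaneous fusions, since in lex order a decrease at a higher index could in principle license an increase at a lower one. The paper handles this by observing that for $w<v$, whenever $\mu_k(w)<\mu_k(v)$ there must exist $\ell>k$ with $\mu_\ell(w)>\mu_\ell(v)$ (merging can only raise weights), which forces $\mu_k(w)=\mu_k(v)$ for all $k\ge 3$. Your per-fusion analysis does globalize to this statement---the largest weight of any newly created root is where the first lex-discrepancy occurs, and it is an increase---but you should say so explicitly rather than jumping directly from the single-fusion case to the general conclusion.
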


\begin{proof}
A vertex $w$ of $E_m$ with $w<v$ is in the $\mu$-descending merge link of $v$ if and only $\mu_k(w)=\mu_k(v)$ for all $k\ge 3$. This is because if $\mu_k(w)<\mu_k(v)$ for $w<v$ then necessarily $\mu_\ell(w)>\mu_\ell(v)$ for some $\ell>k$ (and on the other hand $\phi(w)<\phi(v)$). Suppose $v=[f^{-1} g]$ and write the domain of $f^{-1} g$ as the disjoint union $L_1\sqcup\cdots\sqcup L_m$ of leaves $L_i$. Without loss of generality (that is, up to the action of $\mathcal{G}(m)$), we can assume that for each $1\le i\le \mu_1(v)$, $L_i$ does not get mapped by $f^{-1} g$ into the same root as any other $L_j$. In other words, the first $\mu_1(v)$ of the leaves are the ones revealing $\mu_1(v)$, and note that this property does not depend on the choice of representative of $v$. Now suppose $w=[f_1^{-1} g_1]<v$ is in the $\mu$-descending merge link of $v$. For a homeomorphism $h$ with domain $\C^S(m)$ write $\res_\alpha(h)$ for the restriction of $h$ to $\C^S_1\sqcup\cdots\sqcup \C^S_{\mu_1(v)}$ and $\res_\omega(h)$ for the restriction of $h$ to $\C^S_{\mu_1(v)+1}\sqcup\cdots\sqcup \C^S_m$. Since $\mu_k(w)=\mu_k(v)$ for all $k\ge 3$ we know that $[\res_\omega(f_1^{-1} g_1)]=[\res_\omega(f^{-1} g)]$. We thus have a well defined injective simplicial map from the $\mu$-descending merge link of $v$ to $VE_{\mu_1(v)}$ given by sending $[f_1^{-1} g_1]$ to $[\res_\alpha(f_1^{-1} g_1)]$. It is straightforward to see that it is also surjective. The connectivity statement follows from Lemma~\ref{lem:VE_conn}.
\end{proof}

\subsection*{The \texorpdfstring{$\boldsymbol{\mu}$}{mu}-descending split link}
Now we focus on the $\mu$-descending split link of $v=[f^{-1}g]$. The $\mu$-descending split link of $v$ lies in the realization of the open interval $(v,[\id_m])$. An element $w$ of $(v,[\id_m])$ is in the $\mu$-descending split link of $v$ if and only $\mu_k(v)>\mu_k(w)$ for some $k\ge 3$. This is because if $\mu_\ell(v)<\mu_\ell(w)$ for $v<w$ then necessarily $\mu_k(v)>\mu_k(w)$ for some $k>\ell$ (and on the other hand $\phi(v)>\phi(w)$ is impossible). Call a root $R_i$ in the range of $f^{-1}g$ \emph{fat} if it has weight at least $3$. Any element of $(v,[\id_m])$ is of the form $[e f^{-1}g]=[(f')^{-1}g]$ for $e$ and $f'$ multicolored forests such that $f=f'e$. Note that the roots $R_i$ comprise the domain of $e$. The above characterization implies that such an element is $\mu$-descending if and only if $e$ is non-trivial on at least one fat root. Consider the subposet
\[
D(f)\defeq\{[e]\in ([\id_q],[f])\mid e\text{ is non-trivial on at least one fat root}\}
\]
of $([\id_q],[f])$. The $\mu$-descending split link of $v=[f^{-1}g]$ is isomorphic to $|D(f)|$ via the map $[(f')^{-1}g]\mapsto [e]$ induced by right multiplication by $g^{-1}f$.

\begin{lemma}[Descending split link, first case]\label{lem:some_small}
If $\mu_2(v)\ne 0$ then $|D(f)|$, and hence the $\mu$-descending split link of $v$, is contractible.
\end{lemma}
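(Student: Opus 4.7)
The plan is to identify $D(f)$ with a natural subposet of a Boolean lattice and then to produce an explicit poset-theoretic deformation retraction of $|D(f)|$ to a single vertex, using the weight-$2$ root guaranteed by $\mu_2(v)\neq 0$.

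First I would make the lattice structure of sub-expansions explicit. Since $f$ is elementary, at each root $R_i$ it performs one cut along each color in some finite set $S_i\subseteq S$, and different cuts at the same root commute up to permutation. Hence the interval of elementary sub-forests with $[\id_q]\le[e]\le[f]$ is canonically isomorphic to the Boolean lattice $\prod_{i=1}^q 2^{S_i}$: a tuple $(T_i)_i$ corresponds to the elementary sub-forest performing the splits $T_i\subseteq S_i$ at each $R_i$, and join and meet are coordinate-wise union and intersection. The weight of $R_i$ is then $2^{|S_i|}$, so fat roots correspond to indices with $|S_i|\ge 2$ and weight-$2$ roots correspond to indices with $|S_i|=1$. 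Under this identification $D(f)$ is the subposet of tuples $(T_i)$ with $(T_i)\neq (S_i)_i$ and $T_i\neq\emptyset$ for at least one fat $i$.

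Next I would choose the ``cone point'' $[e_0]$. The $\mu$-descending split link is only considered when $v$ has a fat root, so I fix a fat index $i_0$ and a color $s_0\in S_{i_0}$. The hypothesis $\mu_2(v)\neq 0$ lets me fix a weight-$2$ index $j_0$, so $|S_{j_0}|=1$. Let $[e_0]\in D(f)$ be the element with $T_{i_0}=\{s_0\}$ and $T_i=\emptyset$ for $i\ne i_0$.

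The retraction is built from two order-preserving self-maps. Let $\psi_1\colon D(f)\to D(f)$ replace the $j_0$-coordinate by $\emptyset$; since emptying a coordinate cannot create the top element and preserves the fat-root condition, this lands in $D(f)$. Let $\psi_2(x)=x\vee[e_0]$; taken alone $\psi_2$ may escape $D(f)$ by hitting $[f]$, but the composition $\psi_2\circ\psi_1$ stays in $D(f)$ because its $j_0$-coordinate is $\emptyset\vee\emptyset=\emptyset\neq S_{j_0}$, and its $i_0$-coordinate contains $s_0$. Moreover $\psi_2\psi_1(x)\ge [e_0]$, so $\psi_2\psi_1$ takes $D(f)$ into $D(f)_{\ge [e_0]}$. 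The main (and essentially only) obstacle is this verification: the hypothesis $\mu_2(v)\neq 0$ is used precisely here, since the weight-$2$ root $R_{j_0}$ provides the coordinate that $\psi_1$ empties to prevent the join with $[e_0]$ from producing $[f]$.

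Finally I would assemble the homotopies. Since $\psi_1\le\id_{D(f)}$ and $\psi_1\le\psi_2\circ\psi_1$ pointwise, Quillen's observation (already invoked in the proof of Proposition~\ref{prop:cible}) gives $\id_{D(f)}\simeq\psi_1\simeq\psi_2\circ\psi_1$ on $|D(f)|$. The subposet $D(f)_{\ge [e_0]}$ has $[e_0]$ as its minimum, so the constant map to $[e_0]$ is a poset map below the identity and its realization is contractible to $[e_0]$. Composing, $\id_{|D(f)|}$ is homotopic to the constant map at $[e_0]$, so $|D(f)|$ is contractible.
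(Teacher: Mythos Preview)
Your identification of the interval $[[\id_q],[f]]$ with a Boolean lattice $\prod_i 2^{S_i}$ is incorrect. An elementary multicolored tree need not ``perform one cut along each color in some finite set $S_i$'': for instance, with $S=\{s,t\}$ the tree $f_0=(x_t\oplus\id_1)x_s$ is elementary of weight~$3$, yet $[[\id_1],[f_0]]$ is the three-element chain $[\id_1]<[x_s]<[f_0]$, not the four-element lattice $2^{\{s,t\}}$. In general the interval is a lattice (Proposition~\ref{prop:JoinForests}) but rarely Boolean, and root weights need not be powers of~$2$. Since your justifications that $\psi_1$ and $\psi_2\circ\psi_1$ land in $D(f)$ are phrased in terms of coordinates of this Boolean lattice, this is a genuine gap.

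That said, your strategy survives once the Boolean claim is discarded. Interpret $\psi_1$ directly as ``replace $e$ on the root $R_{j_0}$ by the identity'' (a poset map $\leq\id$ on the closed interval, preserving $D(f)$ since $R_{j_0}$ is not fat), and take $[e_0]$ to be any atom of $[[\id_q],[f]]$ supported on a fat root $R_{i_0}$; concretely, a simple split $x_{s_0}$ at $R_{i_0}$ with $s_0\in\SSpec(f_{i_0})$, which exists since $f_{i_0}$ factors through some simple split. Then $\psi_1(x)$ and $[e_0]$ both induce the trivial partition on $R_{j_0}$, so their join (the common refinement) is still trivial on $R_{j_0}$ and hence strictly below $[f]$; and this join is $\geq[e_0]$, so nontrivial on the fat root $R_{i_0}$. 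Thus $\psi_2\circ\psi_1$ lands in $D(f)_{\geq[e_0]}$ and your Quillen homotopies go through.

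For comparison, the paper's proof is shorter and uses a single map: it prunes \emph{all} non-fat roots at once via $\pi([e])=[\bar e]$, and observes that $\pi(D(f))$ has maximum element $\pi([f])$, which lies strictly below $[f]$ precisely because the weight-$2$ root gets pruned. Your two-step route reaches a cone with a minimum rather than a maximum; both work, but the single pruning map avoids having to choose $[e_0]$ or invoke the join operation.
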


\begin{proof}
There is a poset map $\pi$ from $[[\id_q],[f]]$ to itself given by sending $[e]$ to $[\overline{e}]$ where $\bar{e}$ is the homeomorphism coinciding with $e$ on each fat root in the domain and acting as the identity on each non-fat root in the domain (picture $\pi$ as ``pruning'' away any parts of $f$ on non-fat roots). Since $[\overline{e}]\le [e]$ and $[\overline{e}]=[e]$ whenever $[e]$ is in the image of $\pi$, this yields a retraction of $|[[\id_q],[f]]|$ onto the realization of the image of $\pi$. Moreover, if $e$ is non-trivial on at least one fat root then so is $\overline{e}$, so $\pi$ restricts to a retraction of $|D(f)|$ onto its image under $\pi$. Since $\mu_2(v)\ne 0$ we have $\pi([f])\in ([\id_q],[f])$, so $\pi(D(f))$ is conically contractible with cone point $\pi([f])$ (see \cite[Section~1.5]{quillen78}).
\end{proof}

If $\mu_2(v)=0$ then $D(f)=([\id_q],[f])$ and $\pi$ is the identity, so we need to just inspect $([\id_q],[f])$ directly. Say $f=f_1\oplus\cdots\oplus f_q$ for $f_i$ of corank $1$. For each $f_i$, recall that $s\in S$ lies in $\Spec(f_i)$ if there exists $c\in \C^S$ with $f_i(c)(s)\ne c(s)$. Now define the \emph{special spectrum} $\SSpec(f_i)$ to be the set of $s\in\Spec(f_i)$ such that $f_i(c)(s)\ne c(s)$ holds for all $c\in \C^S$ (rather than just some $c$). For example the spectrum of $(x_t\oplus \id)x_s$ is $\{s,t\}$ and the special spectrum is $\{s\}$, whereas the special spectrum of $(x_t\oplus x_t)x_s$ is the whole spectrum $\{s,t\}$.

In the following note that for $v\in E_m$ we have $\mu_1(v)\le m-1$, so $\log_2(m-\mu_1(v))$ is defined (and non-negative).

\begin{lemma}[Descending split link, second case]\label{lem:all_big}
Let $v\in E_m\setminus VE_m$. If $\mu_2(v)=0$ then $|([\id_q],[f])|$, and hence the $\mu$-descending split link of $v$, is $(\log_2(m-\mu_1(v))-3)$-connected.
\end{lemma}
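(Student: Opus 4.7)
The plan is to exploit the product structure of the closed interval $[[\id_q],[f]]$, convert it to a topological join at the level of order complexes, and then inductively analyze the single-root factors.

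Writing $f=f_1\oplus\cdots\oplus f_q$, the closed interval $[[\id_q],[f]]$ is naturally isomorphic as a poset to the product $\prod_{i=1}^{q}[[\id_1],[f_i]]$ via $[e_1\oplus\cdots\oplus e_q]\leftrightarrow ([e_1],\ldots,[e_q])$. Since $\mu_2(v)=0$, each factor is either a one-point poset (when $w_i=1$) or the lattice of elementary coarsenings of an elementary tree of weight $w_i\ge 3$; let $r$ denote the number of fat factors. I would then invoke the standard result that for nontrivially bounded posets $P_1,\ldots,P_r$, the order complex of the proper part of $\prod_i P_i$ is homotopy equivalent to the topological join of the order complexes of the proper parts of the $P_i$. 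In our setting this yields
\[
|([\id_q],[f])|\;\simeq\;\underset{i\text{ fat}}{*}\,|([\id_1],[f_i])|.
\]

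Next I would prove by induction on $w_i$ that $|([\id_1],[f_i])|$ is $(\log_2 w_i-3)$-connected for each fat root. The base case $w_i=3$ is immediate: the open interval consists of a single point (the unique proper non-trivial coarsening of $f_i$'s partition, obtained by undoing the second split), which is contractible, hence arbitrarily highly connected. For the inductive step I would decompose $f_i=(g_1\oplus g_2)x_s$ and cover $([\id_1],[f_i])$ by the two subposets consisting of coarsenings in which the leaves of $g_1$ (respectively $g_2$) have been merged into a single brick, combining these via a Nerve Lemma argument with the inductive bounds for $g_1$ and $g_2$.

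To assemble the total bound via $\operatorname{conn}(X*Y)\ge\operatorname{conn}(X)+\operatorname{conn}(Y)+2$: if any fat root has weight exactly $3$, then the corresponding factor is contractible, so the whole join is contractible and the conclusion holds trivially. Otherwise every $w_i\ge 4$; using $w_i\ge 4$ together with the inequality $2^{r-1}\ge r$ for $r\ge 1$ gives $\prod_i w_i\ge 2^{r-1}\sum_i w_i$, equivalently $\sum_i\log_2 w_i-\log_2(\sum_i w_i)\ge r-1$, so the connectivity of the join is at least $\sum_i(\log_2 w_i-3)+2(r-1)\ge\log_2(\sum_i w_i)-3=\log_2(m-\mu_1(v))-3$, as required. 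The main obstacle I anticipate is the inductive per-root bound: elementary trees may decompose very unbalancedly (e.g.\ with $|g_1|=1$), and the lattice of elementary coarsenings of a Boolean-cube partition has intricate combinatorial structure, so making the Nerve argument go through cleanly will likely require strengthening the inductive hypothesis or handling the most unbalanced cases separately.
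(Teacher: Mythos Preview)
Your product/join reduction is correct and is a genuinely different route from the paper's.  The closed interval $[[\id_q],[f]]$ does factor as $\prod_i[[\id_1],[f_i]]$ (vertices of $F_q$ are unordered dyadic partitions, which restrict independently to each root), Walker's theorem gives $|([\id_q],[f])|\simeq *_{i\text{ fat}}|([\id_1],[f_i])|$, and your join-connectivity bookkeeping in the final paragraph is right (the inequality $\prod w_i\ge 2^{r-1}\sum w_i$ for $w_i\ge 4$ follows from $\prod w_i/\sum w_i\ge 4^{r-1}/r\ge 2^{r-1}$).

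The gap is exactly where you flagged it, but it is worse than you suggest: the two-subposet cover you propose for the inductive step does not cover.  Write $f_i=(g_1\oplus g_2)x_s$ and take for instance $f_i=(x_t\oplus x_t)x_s$ with $s\ne t$.  The coarsening $[x_t]\in([\id_1],[f_i])$ has bricks $B(t{=}0),B(t{=}1)$; neither $B(s{=}0)$ nor $B(s{=}1)$ is a brick of this partition, so $[x_t]$ lies in neither of your two pieces.  More generally, whenever $\Spec(g_1)\cap\Spec(g_2)\ne\emptyset$ there are coarsenings that split only in shared directions, and these escape your cover.  So the induction as sketched cannot close.

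The paper's argument avoids this by covering $([\id_q],[f])$ not by two pieces but by the stars of \emph{all} atoms $z_i^s=[\id_{i-1}\oplus x_s\oplus\id_{q-i}]$, one for each fat root $i$ and each $s$ in the special spectrum $\SSpec(f_i)$.  Any finite intersection of such stars is the star of the join of the corresponding atoms, hence contractible, so the Nerve Lemma reduces everything to the nerve.  If some $\SSpec(f_i)\subsetneq\Spec(f_i)$ the join of all atoms is strictly below $[f]$ and the nerve is a full simplex; otherwise every $f_i$ is the full cube in its spectrum, the nerve is the boundary of a simplex on $\sum_i|\Spec(f_i)|$ vertices, and $\log_2(2^a+2^b)\le a+b$ gives the bound.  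This single Nerve Lemma application replaces both your product decomposition and your missing per-root induction; alternatively, applying it to a single root would supply the per-root bound you need and rescue your argument.
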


\begin{proof}
Our strategy is to cover $([\id_q],[f])$ by the stars of its minimal elements (i.e., atoms in the lattice $[[\id_q],[f]]$). Each minimal element is of the form $z_i^s\defeq[\id_{i-1}\oplus x_s\oplus \id_{q-i}]$ for some $i$ such that $R_i$ is a fat root and some $s\in \SSpec(f_i)$. Write $Z_i^s$ for the star of $z_i^s$ in $|([\id_q],[f])|$. By construction $|([\id_q],[f])|$ is covered by the collection of those $Z_i^s$ such that $R_i$ a fat root and $s\in \SSpec(f_i)$. Any intersection of complexes $Z_i^s$ equals the star in $|([\id_q],[f])|$ of the join of the corresponding elements $z_i^s$, which is contractible as soon as it is non-empty, and so by the Nerve Lemma~\ref{lem:nerve} it suffices to prove that the nerve of this covering is $(\log_2(m-\mu_1(v))-3)$-connected. If $\SSpec(f_i)\subsetneq\Spec(f_i)$ for some $i$ then the $z_i^s$ all have a common upper bound strictly smaller than $[f]$, and so the nerve is a (contractible) simplex. Now assume $\SSpec(f_i)=\Spec(f_i)$ for all $i$. Then the join of a collection of $z_i^s$ equals $[f]$ if we use every $z_i^s$ and otherwise is a strict lower bound of $[f]$. Hence the nerve of the covering is the boundary of a simplex whose vertices are indexed by pairs $(i,s)$ such that $R_i$ is a fat root and $s\in \Spec(f_i)$. Since $\Spec(f_i)$ is empty if $R_i$ is not fat, the number of vertices of this simplex is $\sum_{i=1}^q |\Spec(f_i)|$, and so the nerve is $((\sum_{i=1}^q |\Spec(f_i)|)-3)$-connected. Each $f_i$ has $2^{|\Spec(f_i)|}$ leaves, and $f$ has $m$ leaves, so $m=\sum_{i=1}^q 2^{|\Spec(f_i)|}$. Since $\log_2(2^a+2^b)\le a+b$ whenever $a,b\ge 1$, we conclude that $\log_2(m-\mu_1(v))\le \sum_{i=1}^q |\Spec(f_i)|$, so we are done.
\end{proof}

\begin{remark}
In the case when $S$ is finite and $G$ is trivial, the proof of Lemma~\ref{lem:all_big} also serves to correct a mistake in the proof of \cite[Lemma~3.8]{fluch13}. There the complex called the ``up-link'' is implied to be a join of some number of non-empty complexes, but in fact it is not quite a join, since we are not allowed to completely split apart every block, so there is one maximal simplex missing. It is easy to show that removing this maximal simplex does not affect the claimed connectivity, and alternatively the nerve argument given here directly proves the claimed connectivity.
\end{remark}

\begin{corollary}\label{cor:mu_dlk}
For any vertex $v\in E_m\setminus VE_m$, the $\mu$-descending link of $v$ is $(\nu(\mu_1(v))+\log_2(m-\mu_1(v))-2)$-connected. In particular its connectivity is at least the minimum of $\nu(m/2)-2$ and $\log_2(m/2)-2$.
\end{corollary}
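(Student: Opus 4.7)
The plan is to combine the connectivity estimates from the three preceding lemmas using the standard join formula. The $\mu$-descending link $\dlk_\mu v$ decomposes as the join of the $\mu$-descending merge link and the $\mu$-descending split link, since in $P_1$ any lower bound of $v$ is comparable by transitivity to any upper bound. Recall that if $A$ is $a$-connected and $B$ is $b$-connected then $A*B$ is $(a+b+2)$-connected, with the convention that the empty complex is $(-2)$-connected.

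By Lemma~\ref{lem:desc_merge_link} the merge link is $(\nu(\mu_1(v))-1)$-connected (for $\mu_1(v)\in\{0,1\}$ the merge link is in fact empty, which is consistent since $\nu(0)-1=\nu(1)-1=-2$). For the split link I would split on whether $\mu_2(v)$ vanishes: if $\mu_2(v)\ne 0$ then Lemma~\ref{lem:some_small} gives that it is contractible and hence so is $\dlk_\mu v$; otherwise Lemma~\ref{lem:all_big} gives that it is $(\log_2(m-\mu_1(v))-3)$-connected. Plugging into the join formula then yields the first claimed bound
\[
(\nu(\mu_1(v))-1)+(\log_2(m-\mu_1(v))-3)+2 \;=\; \nu(\mu_1(v))+\log_2(m-\mu_1(v))-2 \text{.}
\]

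For the ``in particular'' statement, I would run a dichotomy on which of $\mu_1(v)$ and $m-\mu_1(v)$ is at least $m/2$. Note first that $v\notin VE_m$ forces at least one fat root, so $m-\mu_1(v)\ge 3$ and in particular $\log_2(m-\mu_1(v))\ge 0$. If $\mu_1(v)\ge m/2$, then monotonicity of $\nu$ gives $\nu(\mu_1(v))\ge \nu(m/2)$, producing a lower bound of $\nu(m/2)-2$. If $\mu_1(v)\le m/2$, then $\log_2(m-\mu_1(v))\ge \log_2(m/2)$; combining this with $\nu(\mu_1(v))\ge 0$ (which holds once $\mu_1(v)\ge 2$) yields $\log_2(m/2)-2$. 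The edge cases $\mu_1(v)\in\{0,1\}$ are handled separately by observing that $\dlk_\mu v$ there equals just the split link, so the bound $\log_2(m-\mu_1(v))-3$ can be read off directly. The genuine content of this corollary is in the three preceding lemmas, so the proof is essentially bookkeeping; the only mild technical point to keep track of is the handling of those small $\mu_1(v)$ cases, since then $\nu(\mu_1(v))$ is negative and the join formula has to be interpreted carefully via the empty-complex convention.
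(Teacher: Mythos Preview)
Your proof is correct and follows essentially the same route as the paper: decompose $\dlk_\mu v$ as the join of the merge and split links, apply Lemmas~\ref{lem:desc_merge_link}, \ref{lem:some_small}, and \ref{lem:all_big} together with the join connectivity formula for the first claim, and then run the dichotomy on $\mu_1(v)\gtrless m/2$ for the second. Your explicit treatment of the cases $\mu_1(v)\in\{0,1\}$ (where $\nu(\mu_1(v))=-1$) is more careful than the paper, which simply asserts the inequality $\nu(\mu_1(v))+\log_2(m-\mu_1(v))\ge \log_2(m/2)$ without singling these out; this extra care is harmless but not strictly necessary for the intended application, since all that matters downstream is that the bound tends to infinity with~$m$.
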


\begin{proof}
By Lemma~\ref{lem:desc_merge_link} the $\mu$-descending merge link of $v$ is $(\nu(\mu_1(v))-1)$-connected. By Lemmas~\ref{lem:some_small} and~\ref{lem:all_big}, the $\mu$-descending split link of $v$ is $(\log_2(m-\mu_1(v))-3)$-connected, so the first claim follows from the fact that the join of an $a$-connected complex and a $b$-connected complex is $(a+b+2)$-connected. To see the second claim, note that if $\mu_1(v)\ge m/2$ then $\nu(\mu_1(v))+\log_2(m-\mu_1(v))-2 \ge \nu(m/2)-2$ and if $\mu_1(v)\le m/2$ then $\nu(\mu_1(v))+\log_2(m-\mu_1(v))-2 \ge \log_2(m/2)-2$.
\end{proof}

\subsection*{Proof of finiteness theorem}

Now that we have analyzed the connectivity of the \mbox{$\mu$-descending} links we are ready to prove our main finiteness theorem.  This depends on the following proposition.

\begin{proposition}\label{prop:E_hi_conn}
The connectivity of $E_m$ is at least the minimum of $\nu(m/2)-2$ and $\log_2(m/2)-2$.
\end{proposition}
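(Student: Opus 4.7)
The plan is to apply Bestvina--Brady style discrete Morse theory to $E_m$ using the function $\mu$, starting from the sublevel complex $VE_m$ and building up to all of $E_m$ by adding the remaining vertices in order of increasing $\mu$-value. All of the delicate combinatorial work has already been done: Lemma~\ref{lem:VE_conn} controls the connectivity of $VE_m$ and Corollary~\ref{cor:mu_dlk} controls the connectivity of the $\mu$-descending links above it. The remaining task is simply to assemble these two inputs.

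First I would recall that $VE_m = E_m^{\mu \le (0,\dots,0,m-1)}$, as noted in the text, so in the filtration of $E_m$ by $\mu$-sublevel complexes the complex $VE_m$ is the sublevel set just below the level at which any vertex with some $\mu_k>0$ ($k\ge 3$) first appears. Because $\mu$ takes only finitely many values on $E_m$ (each coordinate is bounded by $m$), $E_m$ is obtained from $VE_m$ by successively attaching each $v\in E_m\setminus VE_m$ along the cone on $\dlk_\mu v$ in order of increasing $\mu$-value.

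Set $c \defeq \min\bigl(\nu(m/2)-2,\,\log_2(m/2)-2\bigr)$, the desired lower bound. Lemma~\ref{lem:VE_conn} gives that $VE_m$ is $(\nu(m)-1)$-connected, and since $\nu$ is non-decreasing we have $\nu(m)-1 \ge \nu(m/2)-2 \ge c$; hence $VE_m$ is $c$-connected. Corollary~\ref{cor:mu_dlk} gives that for every $v\in E_m\setminus VE_m$ the link $\dlk_\mu v$ is $c$-connected. By the standard Bestvina--Brady Morse lemma \cite{bestvina97}, since each such descending link is $c$-connected, the inclusion $VE_m\hookrightarrow E_m$ induces isomorphisms on $\pi_i$ for $i\le c$ and a surjection on $\pi_{c+1}$; combined with the $c$-connectivity of $VE_m$, this yields that $E_m$ is $c$-connected.

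There is no real obstacle left in this final step, since all the hard analysis has been packaged into Lemmas~\ref{lem:VE_conn}, \ref{lem:desc_merge_link}, \ref{lem:some_small}, and \ref{lem:all_big}; the one remaining check is the numerical inequality $\nu(m)-1 \ge c$, which is immediate from monotonicity of $\nu$.
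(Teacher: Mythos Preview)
Your proof is correct and follows essentially the same route as the paper's: both invoke Lemma~\ref{lem:VE_conn} for the connectivity of $VE_m$, Corollary~\ref{cor:mu_dlk} for the $\mu$-descending links, and then the Bestvina--Brady Morse lemma to transfer the connectivity bound from $VE_m$ to $E_m$, with the same monotonicity check $\nu(m)-1\ge\nu(m/2)-2$.
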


\begin{proof}
By Corollary~\ref{cor:mu_dlk} the connectivity of the $\mu$-descending link of any vertex $v$ in $E_m\setminus VE_m$ is at least the minimum of $\nu(m/2)-2$ and $\log_2(m/2)-2$. By Lemma~\ref{lem:VE_conn} $VE_m$ is $(\nu(m)-1)$-connected, hence $(\nu(m/2)-2)$-connected, so Morse theory (e.g., \cite[Corollary~2.6]{bestvina97}) tells us that the inclusion $VE_m \to E_m$ induces isomorphisms in $\pi_k$ for all $k$ up to the minimum of $\nu(m/2)-2$ and $\log_2(m/2)-2$ and a surjection in $\pi_k$ for $k$ one more than this. Hence the connectivity of $E_m$ is at least the minimum of $\nu(m/2)-2$ and $\log_2(m/2)-2$.
\end{proof}

Since both of these functions go to $\infty$ as $m$ goes to $\infty$, we have shown that the connectivity of $E_m$ goes to $\infty$ as $m$ goes to~$\infty$, i.e., the connectivity of $\dlk [h]$ goes to~$\infty$ as $\phi([h])$ goes to~$\infty$.

\finitenesstheorem*
\begin{proof}
We verify the requirements of Theorem~\ref{thrm:morse_brown}. The complex $X$ is contractible (Proposition~\ref{prop:cible}), the action of $SV_G$ on $X$ has stabilizers of type $\F_n$ (Corollary~\ref{cor:stabs}), and each sublevel set $X_m$ is cocompact (Proposition~\ref{prop:cocpt}). By Proposition~\ref{prop:E_hi_conn} the descending link of a vertex gets arbitrarily highly connected as the $\phi$ value of the vertex goes to infinity. Now Theorem~\ref{thrm:morse_brown} says $SV_G$ is of type $\F_n$.
\end{proof}

\bibliographystyle{alpha}

\newcommand{\etalchar}[1]{$^{#1}$}

\end{document}